\newtheorem{thm}{Theorem}[section]
\newtheorem{cor}[thm]{Corollary}
\newtheorem{lem}[thm]{Lemma}
\newtheorem{prop}[thm]{Proposition}
\theoremstyle{definition}
\newtheorem{rem}[thm]{Remark}
\newtheorem{exa}[thm]{Example}
\newtheorem{nota}[thm]{Notation}
\DeclareMathOperator{\ev}{ev}
\DeclareMathOperator{\Hess}{Hess}
\DeclareMathOperator{\inj}{inj}
\DeclareMathOperator{\mo}{mod}
\DeclareMathOperator{\Ric}{Ric}
\DeclareMathOperator{\SL}{SL}
\DeclareMathOperator{\Sp}{Sp}
\DeclareMathOperator{\SO}{SO}
\DeclareMathOperator{\GL}{GL}
\DeclareMathOperator{\PGL}{PGL}
\DeclareMathOperator{\tr}{tr}
\DeclareMathOperator{\Vol}{Vol}
\DeclareMathOperator{\barDH}{Bar}
\newcommand{\cA}{\mathcal{A}}
\newcommand{\fa}{\mathfrak{a}}
\newcommand{\ta}{\tilde \alpha}
\newcommand{\ha}{\hat{\alpha}}
\newcommand{\Roots}{\hat{R}} 
\newcommand{\rRoots}{R}
\newcommand{\opP}{\mathcal{P}}
\newcommand{\rhodeux}{\varrho^{(2)}}
\newcommand{\rhoun}{\varrho^{(1)}}
\begin{document}

\title{Ricci flat Kähler metrics on rank two complex symmetric spaces}

\author{Olivier Biquard \and Thibaut Delcroix}
\address{Sorbonne Université and École Normale Supérieure, PSL University}
\email{olivier.biquard@ens.fr}
\address{École Normale Supérieure, PSL University}
\email{thibaut.delcroix@ens.fr}

\thanks{This work has received support under the program ``Investissements d'Avenir'' launched by the French Government and implemented by ANR with the reference ANR-10-IDEX-0001-02 PSL}

\date{}

\begin{abstract}
  We obtain Ricci flat Kähler metrics on complex symmetric spaces of
  rank two by using an explicit asymptotic model whose geometry at
  infinity is interpreted in the wonderful compactification of the
  symmetric space. We recover the metrics of Biquard-Gauduchon in the
  Hermitian case and obtain in addition several new metrics.
\end{abstract}

\maketitle

\section{Introduction}

A (complex) symmetric space is a homogeneous space under a complex
semisimple Lie group, whose isotropy Lie subalgebra is the fixed point
set of a complex involution.  It may always be viewed as a
complexified compact symmetric space, thus also as the tangent or
cotangent bundle of such a compact symmetric space, equipped with the
appropriate complex structure.  Such a complex manifold may admit a
Ricci flat Kähler metric and indeed several such metrics have already
been exhibited: notably Stenzel's metrics on rank one complex
symmetric spaces \cite{Ste93}, and Biquard-Gauduchon's hyperKähler
metrics on Hermitian complex symmetric spaces \cite{BG96}. These metrics are Asymptotically Conical (AC), with smooth cone at infinity for Stenzel's metrics and singular cone for Biquard-Gauduchon's metrics.

Tian and Yau developed in \cite{TiaYau90,TiaYau91} a general method
to obtain complete Ricci flat Kähler metrics on non-compact complex
manifolds by viewing such a manifold as the complement of a smooth
divisor supporting the anticanonical divisor in a Fano manifold (or
more generally orbifold).  If the anticanonical divisor thus obtained
is non-reduced, then a condition has to be imposed on the reduced
divisor, namely that it admits a, necessarily positive,
Kähler-Einstein metric. The Tian-Yau theorem was refined by various authors along the years, and most notably in the AC case by Conlon and Hein \cite{ConHei13,ConHei15}. Recently, new examples of AC Calabi-Yau metrics with singular cone at infinity were constructed in \cite{CDR16,Li17,Sze17}, in particular on $\Bbb{C}^n$ for $n>2$.

In this article we use the Tian-Yau philosophy to produce Ricci flat
Kähler metrics on complex symmetric spaces of rank two by viewing such
a manifold as the open orbit in its wonderful compactification. 
Let $G/H$ denote the symmetric space and $X$ its wonderful compactification. 
The boundary divisor $X\setminus G/H$ is then a simple normal crossing divisor 
with two irreducible components $D_1$ and $D_2$, which supports an anticanonical 
divisor for the wonderful compactification (note that this manifold is not always
Fano \cite{Ruz12}).  Each component divisor is a two-orbits manifold
with one open orbit which is a homogeneous fibration over a
generalized flag manifold with fibers a complex symmetric space.  We
will search for AC metrics with singular cone at infinity obtained by
taking a line bundle over a singular Kähler-Einstein manifold $\check{D}_2$ which is
a blow-down of the boundary divisor $D_2$. We find an ansatz to
desingularize this singular cone using the other boundary divisor $D_1$ and
in particular the Stenzel metric on the fibers of the open orbit of
this other boundary divisor, which gives the desingularization in the
`collapsed directions'. It is justified by analyzing the explicit
examples produced by the first author and Gauduchon with the Kähler
geometry techniques developed by the second author to study
horosymmetric spaces \cite{DelHoro} (as both symmetric spaces and the
open orbits of divisors in their wonderful compactifications are
horosymmetric).  There is no canonical choice of behavior on the
respective divisors: we obtain examples where only one choice works,
and examples where both choices work, thus providing two Ricci-flat
Kähler metrics with different asymptotic behavior.

\begin{thm}
There exists a Ricci flat Kähler metric with the above boundary behavior on the following 
indecomposable rank two symmetric spaces:
\begin{itemize}
\item for one ordering of divisors, on the non-Hermitian symmetric spaces 
\[ \Sp_8/(\Sp_4\times \Sp_4), \qquad G_2/\SO_4, \qquad G_2\times G_2/G_2, \qquad \SO_5\times \SO_5/\SO_5, \] 
\item on each Hermitian symmetric space, there is a Ricci flat Kähler metric for one choice 
of ordering, which corresponds to Biquard-Gauduchon's metrics, 
\item on the following Hermitian symmetric spaces, the other choice of ordering of divisors produces a 
Ricci flat Kähler metric with a different asymptotic cone: 
\[ \SO_n/S(O_2\times O_{n-2})~\text{ for}~n\geq 5, \qquad  
\SL_5/S(\GL_2\times \GL_3). \] 
\end{itemize}
\end{thm}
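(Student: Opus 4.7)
My overall plan is to follow the Tian-Yau philosophy in the refined form of the Conlon-Hein existence theorem \cite{ConHei15} for asymptotically conical Calabi-Yau metrics with singular Kähler-Einstein cone. I view each rank two symmetric space $G/H$ as the open orbit in its wonderful compactification $X$, fix an ordering of the two boundary divisors $D_1,D_2$, and look for a Ricci flat Kähler metric on $G/H$ whose asymptotic cone is the Calabi cone of a line bundle over the singular Kähler-Einstein Fano $\check D_2$ obtained by contracting $D_2$, with the conical singularities along the contracted locus resolved by the lower rank Stenzel pieces \cite{Ste93} sitting in the open orbit of the other divisor $D_1$.

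Concretely I would proceed in three steps. First, using the horosymmetric formalism of \cite{DelHoro}, I would translate Kähler geometry on $G/H$ and on the open orbits of $D_1,D_2$ into convex analysis on the positive Weyl chamber, which here is two dimensional. The Biquard-Gauduchon metrics \cite{BG96} serve as an explicit test case whose convex potential I would expand asymptotically along both walls in order to extract the correct boundary behavior of the ansatz. Second, I would glue together an explicit model: a Calabi cone built from a singular Kähler-Einstein metric on $\check D_2$ and an anticanonical polarization, together with a Stenzel-type interpolation in the collapsed fiber directions over $D_1\cap D_2$, producing a Kähler form $\omega_0$ whose Ricci potential decays at infinity in the weighted sense required by \cite{ConHei15}. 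Third, I would solve the complex Monge-Ampère equation $(\omega_0+i\partial\bar\partial\varphi)^n=e^f\omega_0^n$ by invoking Conlon-Hein to obtain a genuine Ricci flat Kähler metric in the prescribed asymptotic class.

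The theorem then reduces to a case-by-case verification across the finite list of indecomposable rank two complex symmetric spaces. For each such space and each of the two orderings of the divisors one checks two conditions: that the contracted boundary $\check D_2$ admits a (singular) positive Kähler-Einstein metric, and that the Calabi cone built from it is compatible, through the horosymmetric convex data, with a Stenzel model along $D_1$. Going through the classification, the four non-Hermitian cases $\Sp_8/(\Sp_4\times\Sp_4)$, $G_2/\SO_4$, $G_2\times G_2/G_2$ and $\SO_5\times\SO_5/\SO_5$ admit exactly one working ordering; each Hermitian symmetric space admits the Biquard-Gauduchon ordering; and for $\SO_n/S(O_2\times O_{n-2})$ with $n\geq 5$ and $\SL_5/S(\GL_2\times \GL_3)$ the other ordering also works and yields a second Ricci flat Kähler metric with a different asymptotic cone.

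The main difficulty I anticipate is not the Monge-Ampère step, which should follow once the model is set up, but rather the coherent construction of $\omega_0$: it requires matching two distinct asymptotic regimes, the Calabi cone over $\check D_2$ and the collapsing onto Stenzel fibers over $D_1$, into a single Kähler potential with sufficiently fast decay of the Ricci potential. In the horosymmetric picture this becomes a matching of slopes of a convex potential along the two walls of the Weyl chamber against the geometric weights of the line bundle over $\check D_2$ and the Stenzel volume growth, and it is exactly this compatibility condition that selects the admissible orderings in the statement and rules out, for instance, a second Ricci flat metric on the non-Hermitian examples.
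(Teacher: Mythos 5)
Your overall architecture (Tian--Yau/Hein existence theorem applied to a glued model combining a Calabi cone over the singular Kähler--Einstein contraction $\check D_2$ with Stenzel pieces in the collapsing directions over $D_1$, all phrased in the horosymmetric convex formalism) is indeed the route taken in the paper. But the proposal has two genuine gaps, one of which would make the case analysis come out wrong.

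First, you treat the existence of a singular positive Kähler--Einstein metric on $\check D_2$ as a condition that one simply ``checks''. In fact there is no off-the-shelf existence theorem for singular Kähler--Einstein metrics on $\mathbb Q$-Fano varieties, and a substantial portion of the paper (Sections~\ref{sec:KE} and~\ref{sec:solut-ODE}, culminating in Theorem~\ref{thm:ODE} and Corollary~\ref{cor:facets}) is devoted to proving one: the Kähler--Einstein equation on the rank one horosymmetric variety $\check D_2$ reduces to a one-variable second order ODE, which is then solved by a continuity method with delicate $C^0$ estimates (à la Wang--Zhu, adapted to the singular setting), producing the numerical barycenter criterion $\barDH>n_1/2+n_2$. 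Your plan silently assumes this step and therefore cannot establish the theorem without filling that hole.

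Second, and more seriously for the stated case list, you attribute the selection of admissible orderings to the compatibility of the Calabi cone with the Stenzel wall — essentially to Kähler--Einstein existence on $\check D_2$ plus a slope match. In reality the paper isolates a second, independent obstruction: the condition $a_1\leq a_0$, which controls whether the injectivity radius of the model metric $g_0$ stays bounded below as one goes to $D_1$ (Lemma~\ref{lem:atlas}). If $a_1>a_0$, the fiber directions collapse too quickly compared to the cone radius, the holomorphic bisectional curvature is unbounded, and the $C^2$ estimates in the Monge--Ampère step fail — so the ``glue and invoke Tian--Yau'' step you regard as routine is exactly where the construction breaks. Concretely, for the $B_2$ group case $\SO_5\times\SO_5/\SO_5$ and for $\Sp_8/(\Sp_4\times\Sp_4)$ the second ordering has a well-defined singular Kähler--Einstein metric on $\check D_2$ (the barycenter condition holds), yet there is no second Ricci flat metric because $a_1\leq a_0$ fails; conversely, what singles out $\SO_n/S(O_2\times O_{n-2})$ and $\SL_5/S(\GL_2\times\GL_3)$ among the Hermitian spaces for the second ordering is precisely the simultaneous validity of both conditions (see Table~\ref{tab:constants}). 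Without this second criterion your case-by-case verification does not reproduce the statement. A minor further point: the paper solves the Monge--Ampère equation via Hein's thesis version of Tian--Yau \cite[Prop.\ 4.1]{Hei10} (using the SOB$(n)$ condition and a $C^{3,\alpha}$ quasi-atlas) rather than Conlon--Hein, whose hypotheses are not directly suited to the present singular-cone geometry with non-isolated singularities.
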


There remains a number of cases not covered by the theorem, including the simplest 
rank two symmetric space $\SL_3/\SO_3$. The main reason is that the ansatz considered 
degenerates too badly on the divisor $D_1$, so that the usual techniques to produce 
the Ricci flat solution from an asymptotic solution do not apply. We still expect 
that such metrics exist, and we hope to come back to this problem in the future. There are however two exceptions, which are the symmetric 
space $G_2/\SO_4$ and the group $G_2\times G_2/G_2$, in which case we can prove that 
there does not exist any metric with the expected asymptotic behavior for one ordering 
of divisors. 

Indeed, the existence of such a metric requires the existence of a positive Kähler-Einstein 
metric on the singular $\mathbb{Q}$-Fano variety $\check{D}_2$. There is no general existence 
theorem for Kähler-Einstein metrics on singular Fano varieties. 
For our purpose we thus prove the following characterization:

\begin{thm}\label{th:2}
Assume $\check{D}_2$ is the $\mathbb{Q}$-Fano blowdown of a boundary divisor in the 
wonderful compactification of a rank two indecomposable symmetric space, then 
it admits a (singular) Kähler-Einstein metric if and only if the combinatorial 
condition in \cite{DelKSSV} is satisfied, thus if and only if it is K-stable. 
\end{thm}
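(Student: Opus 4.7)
The plan is to combine the combinatorial K-stability criterion of \cite{DelKSSV} with the Yau--Tian--Donaldson correspondence for singular $\mathbb{Q}$-Fano varieties carrying a reductive symmetry group. First, one recognises $\check{D}_2$, as a blowdown of a boundary divisor of the wonderful compactification of $G/H$, as a spherical (in fact horosymmetric) $\mathbb{Q}$-Fano variety. Its combinatorial invariants --- moment polytope, valuation cone, set of colors, anticanonical class --- can be read off from the wonderful combinatorics of $G/H$, and the klt property needed for singular Kähler--Einstein theory follows from general results on $\mathbb{Q}$-Fano spherical varieties.

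Second, the criterion of \cite{DelKSSV} characterises K-polystability of such a spherical $\mathbb{Q}$-Fano variety as a barycenter condition on its moment polytope, taken with respect to the Duistermaat--Heckman-type measure twisted by the relevant product of positive roots. Since $\check{D}_2$ fits precisely into the framework of \cite{DelKSSV}, this yields the equivalence between the combinatorial condition and K-polystability with no additional work, and reduces, in the rank two indecomposable cases listed, to a finite list of explicit inequalities that can be verified case by case.

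For the equivalence between K-polystability and the existence of a singular Kähler--Einstein metric, the direction from KE existence to K-polystability is Berman's theorem, valid on any $\mathbb{Q}$-Fano variety with klt singularities. The harder converse relies on the equivariant Yau--Tian--Donaldson theorem for singular $\mathbb{Q}$-Fano varieties with reductive group action (Datar--Székelyhidi, Li--Tian--Wang), which applies to our horosymmetric setting; alternatively, a continuity method along the lines of \cite{DelKSSV} reduces the Kähler--Einstein equation to a real Monge--Ampère equation on the moment polytope, for which the barycenter condition provides the necessary a priori $C^0$ estimate. The main obstacle is precisely this converse direction: ensuring that the singular YTD machinery genuinely applies to $\check{D}_2$, or equivalently controlling the family of $K$-invariant Kähler potentials along the continuity path near the singular closed $G$-orbit.
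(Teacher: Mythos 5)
Your proposal and the paper diverge on the central point. You lean on the equivariant singular Yau--Tian--Donaldson theorem (Datar--Sz\'ekelyhidi, Li--Tian--Wang, etc.) to pass from K-polystability to existence of a Kähler--Einstein metric, only mentioning a continuity method as an alternative. The paper explicitly remarks that ``there is no general existence theorem for Kähler--Einstein metrics on singular Fano varieties'' and therefore does \emph{not} invoke any singular YTD result at all. Instead it reduces the Kähler--Einstein equation on the rank one horosymmetric variety $\check{D}_2$ to a single real second-order ODE (Equation~(\ref{eqn_genKE})) and solves that ODE directly by a continuity method (Section~\ref{sec:solut-ODE}): openness by weighted Fredholm analysis, $C^0$ estimates by a Wang--Zhu/Donaldson coarea argument, $C^2$ estimates by a separate-variables integral comparison. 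The barycenter condition $\barDH > n_1 + 2n_2$ is exactly the obstruction that shows up in the $C^0$ estimate, and its necessity is obtained by an elementary integral identity (the vanishing of $\int (e^{-\nu_t}J)'$) giving an upper bound on the admissible continuity parameter --- so the paper does not even need to invoke Berman's theorem for the ``only if'' direction. The equivalence of the barycenter condition with K-stability is simply quoted from \cite{DelKSSV}, not reproved.

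So the gap in your argument is the reliance on a singular/equivariant YTD theorem whose applicability to $\check{D}_2$ you correctly flag as the ``main obstacle'' but do not resolve. The paper bypasses it entirely, and this is what makes the result work in all the claimed cases: the direct ODE continuity method is self-contained, applies to klt $\mathbb{Q}$-Fano varieties with reductive symmetry without any black box, and in fact proves a slightly more general statement (Theorem~\ref{thm:ODE}) that also covers twisted and log Kähler--Einstein variants. Your alternative route --- ``a continuity method \ldots reduces the Kähler--Einstein equation to a real Monge--Ampère equation on the moment polytope'' --- is essentially the paper's method, but you would need to work out the actual openness, $C^0$, and $C^2$ estimates for the degenerate ODE $u''P(u')=e^{-u}J$ near both the zero stratum of $P$ and the singular ray, which is where the real content of the paper's proof lies.
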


Since $\check{D}_2$ is a (colored) rank one horosymmetric variety, 
the Kähler-Einstein equation reduces to a one-variable second order ODE.
The proof is nevertheless obtained by using the continuity method, 
in which the main difficulty is the $C^0$-estimate as usual in the positive 
Kähler-Einstein situation. 
It turns out that the obstruction cancels except for one choice of $D_2$ in the 
cases $G_2/\SO_4$ and $G_2\times G_2/G_2$. These examples are thus natural examples
of singular cohomogeneity one $\mathbb{Q}$-Fano varieties with no singular Kähler-Ricci 
solitons. 
We actually prove the last theorem in a more general situation (see Section~\ref{sec:KE}), 
so that it applies to a larger class of rank one horosymmetric varieties, 
and for variants of the Kähler-Einstein equation. 

There is an obvious question of generalizing these results to higher rank symmetric spaces. We expect the general setting to be the same: the wonderful compactification is obtained by adding $r$ divisors, where $r$ is the rank. For each choice of divisor of the compactification one can try to produce a Ricci flat Kähler metric whose asymptotic cone is a line bundle over a singular blowdown of this divisor. The first step is of course to check the same combinatorial condition as in Theorem~\ref{th:2}, which is not obvious. Here the desingularization is encoded in the combinatorics of the divisors of the compactification. This procedure should lead to a maximum of $r$ distinct Kähler Ricci flat metrics on the symmetric space.

The article is organized as follows. In Section~\ref{sec:setup} we
introduce the relevant combinatorial data associated to symmetric
spaces, their wonderful compactifications, and derive from
\cite{DelHoro} the translation of the Ricci flat equation as a real
two-variables Monge-Ampère equation. In Section~\ref{sec:KE}, we state
a numerical criterion of existence of solutions to a one-variable ODE
which arises as the equation ruling the existence of positive
Kähler-Einstein metrics on rank one horosymmetric spaces or simple
variants of this. In the remaining of this section, we determine when
this criterion is satisfied in the case where the equation exactly
encodes the existence of a (singular) Kähler-Einstein metrics on a
colored $\mathbb{Q}$-Fano compactification of the horosymmetric spaces
arising as the boundary divisors in a wonderful compactification of a
rank two symmetric space. Section~\ref{sec:solut-ODE} is devoted to
the proof of this criterion by a continuity method following the usual
steps for complex Monge-Ampère equations. The $C^0$ estimates are
obtained using essentially Wang and Zhu's method, slightly modified as
in \cite{DelKE}.  In Section~\ref{sec:constr-an-asympt}, we build an
asymptotic solution to the Ricci flat equation on a rank two symmetric
space, using as essential ingredients Stenzel's metrics and the
positive Kähler-Einstein metrics obtained in
Section~\ref{sec:KE}. This is also related to the ansatz used in
\cite{CDR16,Li17,Sze17} but is more complicated and in particular
addresses cones over singular Fano manifolds with non isolated
singularities. Finally, we detail in
Section~\ref{sec:solut-kahl-ricci} the geometry of the asymptotic
solution, and determine when the classical techniques inspired from
Tian-Yau's work apply to our setting to produce Ricci flat Kähler
metrics. The bad cases occur when the ansatz gives a metric where the collapsing towards the singular points is too quick compared to the distance in the cone: the result is a metric with holomorphic bisectional curvatures not bounded from below or from above, which is a crucial ingredient in the $C^2$ estimate for the complex Monge-Ampère equation.

\section{Setup}
\label{sec:setup}

\subsection{Symmetric spaces}

Let $G$ be a complex connected linear semisimple group.
We denote by $\left<\cdot,\cdot\right>$ the Killing form on the Lie algebra 
$\mathfrak{g}$.  
Let $\sigma$ be a complex group involution of $G$. 
Let $T_s$ be a torus in $G$ which satisfies the property that 
$\sigma(t)=t^{-1}$ for all $t\in T_s$ and maximal for this property. 
Let $T$ be a $\sigma$-stable maximal torus of $G$ containing $T_s$. 
The dimension $r$ of $T_s$ is called the \emph{rank} of the symmetric space.

Denote the root system of $(G,T)$ by $\Roots$. 
The restricted root system $\rRoots$ is the set of all non-zero characters 
of $T$ of the form $\ha-\sigma(\ha)$ for $\ha\in \Roots$. 
It forms a (possibly non-reduced) root system of rank $r$ and we let $m_{\alpha}$ 
denote the multiplicity of a restricted root $\alpha$, that is the number of 
roots $\ha\in \Roots$ such that $\alpha=\ha-\sigma(\ha)$. 
We call the Weyl group $W$ of this root system the \emph{restricted} Weyl group, etc.

We choose a positive root system $\Roots^+$ in $\Roots$ such that if 
$\alpha\in \Roots^+\setminus \Roots^{\sigma}$
then $-\sigma(\alpha)\in \Roots^+$. Then the images of elements of 
$\Roots^+\setminus \Roots^{\sigma}$ in $\rRoots$ form a positive 
restricted root system $\rRoots^+$.
We denote by $\fa$ the vector space 
$\mathfrak{t}_s\cap i\mathfrak{k}$, which is naturally identified with  
$\mathfrak{Y}(T_s)\otimes \mathbb{R}$ where $\mathfrak{Y}(T_s)$ denotes 
the group of one parameter subgroups of $T_s$.
We let $\mathfrak{a}^+$ denote the positive restricted Weyl chamber in 
$\mathfrak{a}$ defined by the choice of $\rRoots^+$.
We fix an ordering of the simple restricted roots $\alpha_1,\ldots ,\alpha_r$. 

We will use several times the symmetry of positive roots systems induced by 
a choice of simple root (see e.g. \cite[Lemma~10.2.B]{Hum78}): 
the reflection with respect to $\alpha_1$ induces 
an involution of the set $\rRoots^+\setminus \alpha_1$.

We further denote by $\varpi$ the half sum of positive restricted roots 
(counted with multiplicities) and define the numbers $A_j$ as  
the coordinates of $\varpi$ in the basis of simple roots: 
$\varpi = \sum_{j=1}^r A_j\alpha_j$.

Finally, let us introduce the Duistermaat-Heckman polynomial $P_{DH}$ of $G/H$, 
defined by 
$P_{DH}(p)=\prod_{\alpha\in\rRoots} \left<\alpha,p\right>^{m_{\alpha}}$ 
for $p\in \mathfrak{a}^*$. 

\begin{exa}
Any complex symmetric space as defined above may be recovered as the complexification 
of a compact (Riemannian) symmetric space. For example, the complexification of a 
Grassmannian leads to a complex symmetric space 
$\SL_m/S(\GL_r\times \GL_{m-r})$ for some integers $m$, $r$ with $r\leq m/2$. 
The rank of this symmetric space is $r$, 
and its positive restricted root system (of type $BC_2$) with multiplicities is depicted 
in Figure~\ref{fig:AIII} for the rank two case. 
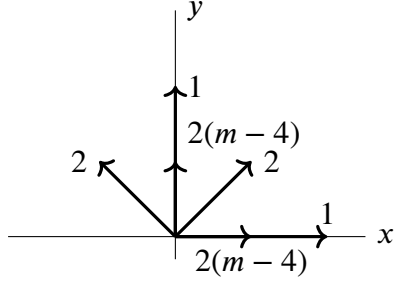
\begin{figure}
\centering
\begin{tikzpicture}
\draw[very thick] [->] (0,0) -- (1,0) node[below]{$2(m-4)$} ;
\draw[very thick] [->] (0,0) -- (2,0) node[above]{$1$} ;
\draw[very thick] [->] (0,0) -- (1,1) node[right]{$2$} ;
\draw[very thick] [->] (0,0) -- (0,1) node[above right]{$2(m-4)$} ;
\draw[very thick] [->] (0,0) -- (0,2) node[right]{$1$} ;
\draw[very thick] [->] (0,0) -- (-1,1) node[left]{$2$} ;
\draw (-2.2,0) -- (2.5,0) node[right]{$x$};
\draw (0,-.3) -- (0,3) node[right]{$y$};
\end{tikzpicture}
\caption{Restricted root system of the complexified Grassmannian}
\label{fig:AIII}
\end{figure}
\end{exa}

\begin{nota}
We will use the notations:
\[ \tilde{\alpha_1} = \alpha_1 - \frac{\left<\alpha_1,\alpha_2\right>}{\left<\alpha_2,\alpha_2\right>}\alpha_2 
\qquad \qquad 
\tilde{\alpha_2}= \alpha_2 - \frac{\left<\alpha_1,\alpha_2\right>}{\left<\alpha_1,\alpha_1\right>}\alpha_1. \]
\end{nota}

Note that 
\[ A_1=\frac{\left<\varpi,\tilde{\alpha}_1\right>}{\left<\tilde{\alpha}_1,\tilde{\alpha}_1\right>} 
\qquad \qquad 
A_2=\frac{\left<\varpi,\tilde{\alpha}_2\right>}{\left<\tilde{\alpha}_2,\tilde{\alpha}_2\right>}. \]

\subsection{The wonderful compactification}

From now on we fix an complex group involution $\sigma$.
Let $H$ be a closed subgroup of $G$ such that $\mathfrak{h}=\mathfrak{g}^{\sigma}$.
We say that a normal projective $G$-variety $X$ with given base point $x\in X$ is 
a $G$-equivariant compactification of $G/H$ if $\mathrm{Stab}_G(x)=H$ and the orbit 
of $x$ is open dense in $X$.
We will identify $G/H$ with the orbit of $x$.

Assume that $H=N_G(G^{\sigma})$. Then by \cite{DCP83} there exists a \emph{wonderful compactification} of 
$G/H$, that is, a $G$-equivariant compactification of $G/H$ which is smooth, such that 
$X\setminus G/H= \bigcup_{j=1}^r D_j$ is a simple normal crossing divisor, and  
the orbit closures of $G$ in $X$ are precisely the partial intersections $\bigcap_{j\in J} D_j$ 
for all subsets $J\subset \{1,\ldots,r\}$.
The number $r$ is the rank of the symmetric space so that in the rank two case, 
there are two codimension one orbits whose respective closures $D_1$ and $D_2$ are smooth and 
intersect transversely at $D_1\cap D_2$ which is the last orbit, of codimension two, equivariantly
isomorphic to a generalized flag manifold. 

The structure of $G$-variety on the boundary divisors $D_j$ (and more generally all orbits) 
is also known from \cite{DCP83}: there exist a parabolic subgroup $P_j$ such that 
$D_j$ is a $G$-equivariant fibration $D_j\rightarrow G/P_j$ whose fiber $X_j$ is the 
wonderful compactification of the symmetric space $L_j/N_{L_j}(L_j^{\sigma})$
where $L_j$ is a Levi subgroup of $P_j$. They are examples of horosymmetric 
varieties \cite{DelHoro}.

There is a unique $G$-stable anticanonical divisor on the wonderful compactification, 
which writes (see \emph{e.g.} \cite{Ruz12})
\[ -K_X= \sum_{j=1}^r (A_j+1)D_j. \]

The closure of the $T$-orbit of $eH$ in $G/H$ is the $T/(T\cap H)$-toric manifold $Z$
whose fan is given by the restricted Weyl chambers and their faces in 
$\mathfrak{Y}(T/T\cap H)\otimes \mathbb{R}$. Furthermore, the intersection of a 
divisor $D_j$ with $Z$ is a restricted Weyl group orbit of toric divisors in $Z$. 
The correspondence can be made explicit: consider the ray defined by the fundamental 
weight associated to $\alpha_j$ (we identify $\mathfrak{a}$ and its dual using the 
Killing form), then $D_j$ intersects $Z$ precisely along the 
toric divisor defined by this ray. 
In other words, consider the (real non-compact part of the) flat passing 
through $eH$ in $X$, equipped with the coordinates induced by the $\alpha_j$. Then 
given a sequence of points $x_k$ converging to a point $x_{\infty}\in X\setminus G/H$, 
we have $x_{\infty}\in \cap_{j\in J} D_j$, where $j\in J$ if and only if 
$\lim_{k\rightarrow \infty}\alpha_j(x_k)= \infty$.

\subsection{The Ricci flat equation}

We are interested in the Ricci flat equation $\Ric(\omega)=0$ for Kähler metrics 
on $G/H$. It is natural to impose a condition of invariance under the action of 
a maximal compact subgroup $K$ of $G$, and we furthermore assume that the Kähler 
form $\omega$ is $i\partial\bar{\partial}$-exact (note that the invariance 
condition implies the second condition provided the symmetric space is not 
Hermitian by \cite{AL92}). Then using the general setup of \cite{DelHoro}, 
one derives easily that the Ricci flat equation translates as follows. 

\begin{prop}{\cite{DelHoro}}
\label{prop:Ric-eqn}
Assume $\Psi$ is a smooth $K$-invariant strictly psh function on $G/H$ and write 
$\Psi(\exp(x)H)=\varrho(x)$ for $x\in \mathfrak{a}$. Then 
$\Ric(i\partial \bar{\partial} \Psi)=0$ if and only if $\varrho$ satisfies 
the equation
\begin{equation}\label{eq:12}
\det(d^2\varrho)\prod_{\alpha\in \rRoots^+} \left<\alpha,d\varrho\right>^{m_{\alpha}} = 
C \prod_{\alpha\in \rRoots^+} \sinh(\alpha)^{m_{\alpha}}
\end{equation}
for some constant $C>0$. 
\end{prop}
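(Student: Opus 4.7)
The plan is to apply directly the horosymmetric framework of \cite{DelHoro} and translate the $\Ric(\omega)=0$ equation using the $KAK$ decomposition, exploiting the fact that a $K$-invariant function on $G/H$ is entirely determined by its restriction $\varrho$ to the flat $\exp(\mathfrak{a})H$ (which is $W$-invariant). The strategy has two parallel computations: compute the Monge-Ampère volume $\omega^n$ along the flat, and compute the squared norm of a trivializing holomorphic volume form $\Omega$, then use that Ricci-flatness is equivalent to their ratio being constant.

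\textbf{Step 1 (Monge-Ampère measure along the flat).} For $x$ in the interior of $\mathfrak{a}^+$, the tangent space at $\exp(x)H$ decomposes as the flat direction $\mathfrak{a}\oplus J\mathfrak{a}$ plus a sum of root subspaces indexed by $\alpha \in \rRoots^+$ each of real dimension $2m_\alpha$. The general formula for the top exterior power of an invariant Kähler form on a horosymmetric space (established in \cite{DelHoro}) gives, at $\exp(x)H$,
\begin{equation*}
\frac{\omega^n}{n!} \;=\; \det(d^2\varrho(x))\,\prod_{\alpha\in\rRoots^+}\bigl\langle \alpha, d\varrho(x)\bigr\rangle^{m_\alpha} \cdot d\mu_0,
\end{equation*}
where $d\mu_0$ is a fixed $K$-invariant reference volume form (built from the Killing form and a choice of Haar measures). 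The toric factor $\det(d^2\varrho)$ comes from the flat directions, and the momentum factor $\langle \alpha,d\varrho\rangle^{m_\alpha}$ from each root subspace, via the standard computation of $i\partial\bar\partial\Psi$ in $KAK$-adapted coordinates.

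\textbf{Step 2 (Norm of the holomorphic volume form).} Since $-K_X = \sum(A_j+1)D_j$ is supported on the boundary, $G/H$ has trivial canonical bundle and carries a $G$-invariant (up to scalar) holomorphic volume form $\Omega$. Its squared norm with respect to $d\mu_0$, pulled back to the flat, is the classical Harish-Chandra density of the symmetric space:
\begin{equation*}
|\Omega|^2 \;=\; C' \prod_{\alpha\in\rRoots^+} \sinh(\alpha(x))^{m_\alpha}\cdot d\mu_0,
\end{equation*}
because this is precisely the Jacobian of the $KAK$ map from $K\times \mathfrak{a}^+ \times K$ onto $G/H$, restricted to the flat.

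\textbf{Step 3 (Conclusion).} Ricci-flatness of the exact Kähler form $\omega = i\partial\bar\partial\Psi$ on the manifold $G/H$ (with trivial canonical bundle) is equivalent to $\omega^n = C\,|\Omega|^2$ for some positive constant $C$, since $\Ric(\omega) = -i\partial\bar\partial \log(\omega^n/|\Omega|^2)$. Both sides are $K$-invariant, hence are determined by their restriction to the flat, where comparing the expressions of Steps~1 and~2 yields exactly equation~\eqref{eq:12}.

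The only real content is Step~1, whose derivation sits entirely inside \cite{DelHoro}; this is why the authors say the equation ``follows easily''. The potentially subtle point — that equality of $W$-invariant functions on $\mathfrak{a}^+$ is enough to imply global equality of $K$-invariant functions on $G/H$ — is immediate from $KAK$. The sign/positivity of $\langle \alpha, d\varrho\rangle$ on $\mathfrak{a}^+$ follows from strict pluri\-subharmonicity of $\Psi$ (combined with $W$-invariance, which forces $d\varrho$ to lie in the dual cone on $\mathfrak{a}^+$), so both sides of \eqref{eq:12} are genuinely positive there.
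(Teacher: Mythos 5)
Your approach is the standard one and matches the strategy of \cite{DelHoro} (which the paper cites without reproducing a proof): express the Monge--Ampère measure of a $K$-invariant metric in the adapted coordinates coming from the $KAH$/flat decomposition, identify the density of the $G$-invariant holomorphic volume form with respect to the same reference, and equate. Indeed, the formula you quote in Step~1 is essentially the $n$-th exterior power of the paper's own display~\eqref{eq:25} (with the $\tanh$ factors absorbed into $d\mu_0$), and Step~2 is the analogue for complex symmetric spaces of the classical $\prod\sinh^{m_\alpha}$ Jacobian formula.

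One point is left implicit in Step~3 and should be stated: the implication $\Ric(\omega)=0\Rightarrow \omega^n=C|\Omega|^2$ does not follow formally from $\Ric(\omega)=-i\partial\bar\partial\log(\omega^n/|\Omega|^2)$, because on a noncompact manifold pluriharmonic does not mean constant. What saves you is precisely the $K$-invariance of $f:=\log(\omega^n/|\Omega|^2)$: restricting $i\partial\bar\partial f$ to the flat by formula~\eqref{eq:25} gives both $d^2f=0$ on $\mathfrak{a}$ and $\langle \alpha, df\rangle=0$ for every restricted root $\alpha$, hence $df=0$ and $f$ is constant. You gesture at $K$-invariance when you say ``both sides are determined by their restriction to the flat'', but as written that sentence is about comparing $\omega^n$ and $|\Omega|^2$, not about killing the pluriharmonic ambiguity; spelling out the short argument above closes the gap. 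With that addition, the proof is complete and coincides in substance with what the authors intend by ``derives easily from \cite{DelHoro}''.
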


Note that it also follows from \cite{AL92} that the correspondence between $\Psi$ 
and $\varrho$ is a 1-1 correspondence between smooth $K$-invariant strictly psh 
functions on $G/H$ and smooth $W$-invariant strictly convex functions on $\fa$.
We will sometimes write $\varrho$ as $\varrho=e^{\phi}$. Then the equation writes, in terms of 
$\phi$, as 
\begin{equation}
e^{n\phi}\det(d^2\phi+(d\phi)^2)\prod_{\alpha\in \rRoots^+} \left<\alpha,d\phi\right>^{m_{\alpha}} = 
\prod_{\alpha\in \rRoots^+} \sinh(\alpha)^{m_{\alpha}},
\end{equation}
where $n$ denotes the dimension of $G/H$ and we assumed $C=1$ as we may without loss of generality.

\begin{exa}
\label{exa-Stenzel}
In the rank one case, the symmetric spaces that we defined earlier are precisely 
the complexified symmetric spaces considered by Stenzel in \cite{Ste93}. 
We may directly recover the main result of \cite{Ste93} using 
Proposition~\ref{prop:Ric-eqn}: the equation reduces to 
a one-variable ODE with separate variables of the form  
\[ \varrho''(x) (\varrho'(x))^{m_1+m_2} = C\sinh^{m_1}(x)\sinh^{m_2}(2x) \]
where $m_1$ is the multiplicity of the simple restricted root and $m_2$ the 
(possibly $0$) multiplicity of its double. Such an equation admits a unique 
even, smooth strictly convex solution, up to an additive constant, which admits a precise 
asymptotic expansion and is the Stenzel metric. We will use this metric later 
in our construction.   

In the case of $\SL_m/S(\GL_1\times \GL_{m-1})$, the complexified Grassmannian of rank one,
one has $m_1=2m-4$ and $m_2=1$, 
and there is a simple explicit solution to the above equation 
for $C=1/2$, defined by $\varrho(x)=\cosh(x)$.
\end{exa}

\begin{exa}
The first author and Paul Gauduchon provided in \cite{BG96} an explicit formula 
for the hyperKähler metric on a complexified compact Hermitian symmetric space. 
Let us see how this formula may be interpreted in our setup, for the 
complexified Grassmannian of rank two. 

We work in the coordinates $(x,y)$ defined by Figure~\ref{fig:AIII}.
Consider the function defined by $\varrho(x,y)=\cosh(x)+\cosh(y)$. 
then we compute $\partial_x\varrho=\sinh(x)$, $\partial_y\varrho=\sinh(y)$ and  
$\det(d^2\varrho)=\cosh(x)\cosh(y)$. 
Plugging this into Equation~\ref{eq:12}, we obtain the equation 
\begin{align*} 
&\cosh(x)\cosh(y)\sinh^{2(m-4)+1}(x)\sinh^{(2(m-4)+1}(y)(\sinh^2(y)-\sinh^2(x))^2=\\
&C\sinh(2x)\sinh(2y)\sinh^{2(m-4)}(x)\sinh^{2(m-4)}(y)\sinh^2(x+y)\sinh^2(y-x) 
\end{align*}
which holds for all $m$ provided $C=1/4$. Hence the function $\varrho$ 
corresponds to a Ricci flat Kähler metric, and one can check that it coincides 
with the metric of \cite{BG96}.
\end{exa}

\section{Positive Kähler-Einstein metrics on rank one horosymmetric spaces}
\label{sec:KE}

\subsection{The equation}

Let us start with a datum composed of 
a positive integer $n_1>0$, a non-negative integer $n_2\geq 0$, 
a one-variable polynomial $P$ which is positive on $]0,n_1+2n_2]$ and such that 
	$P(y)y^{-n_1-n_2}$ is an even polynomial in $y$, non-vanishing at $0$,
and a positive real number $\lambda>n_1+2n_2$ such that $P$ is non-negative on $[0,\lambda]$.
We consider the one-variable second order ordinary differential equation 
\begin{equation}
\label{eqn_genKE}
u''(x)P(u'(x))=e^{-u(x)}\sinh^{n_1}(x)\sinh^{n_2}(2x).
\end{equation}
We will use the notations
$J(x)=\sinh^{n_1}(x)\sinh^{n_2}(2x)$, and  
$P(y)=y^{n_1+n_2}(\lambda-y)^{k}\tilde{P}(y)$. Note that 
$\tilde{P}$ is positive on $[0,\lambda]$. 

We consider the weighted volume and barycenter of the segment $[0,\lambda]$ defined by 
\[ V=\int_0^{\lambda}P(y)dy \qquad \qquad
\barDH = \int_0^{\lambda} yP(y)\frac{dy}{V}. \]
We will prove in Section~\ref{sec:solut-ODE} the following statement.

\begin{thm}
\label{thm:ODE}
The numerical condition $\barDH>n_1+2n_2$ is satisfied if and only if 
there exists a smooth solution $u$ to Equation~(\ref{eqn_genKE}) 
which is strictly convex, even, and such that $u(x)-\lambda|x|=O(1)$. 
Furthermore, if it exists, the solution satisfies 
an asymptotic expansion at $+\infty$ of the form 
\[ u(x)= \lambda x +K_{0,0}+
\sum_{j,k\in \mathbb{N}, \delta\leq j\delta+2k}K_{j,k}e^{-(j\delta+2k)x} \]
with $\delta=(\lambda-n_1-2n_2)/(k+1)$, 
for some constants $K_{j,k}$.
\end{thm}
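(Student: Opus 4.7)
For necessity, suppose $u$ is as in the statement. Strict convexity and evenness give $u'(0)=0$ and $u'$ increasing on $[0,+\infty)$, while $u(x)-\lambda|x|=O(1)$ forces $u'(x)\to\lambda$. The substitution $y=u'(x)$ transforms the integral of Equation~(\ref{eqn_genKE}) on $[0,+\infty)$ into $V=\int_0^{\lambda} P(y)\,dy=\int_0^{+\infty}e^{-u(x)}J(x)\,dx$. Multiplying Equation~(\ref{eqn_genKE}) by $u'(x)$ before integrating, the same substitution yields $\barDH\cdot V$ on the left; on the right, using $u'(x)e^{-u(x)}=-\frac{d}{dx}e^{-u(x)}$ and integrating by parts kills the boundary terms ($J(0)=0$ since $n_1>0$, and $Je^{-u}\to 0$ since $\lambda>n_1+2n_2$), leaving $\int_0^{+\infty}(J'/J)\cdot Je^{-u}\,dx$. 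Since $J'/J=n_1\coth(x)+2n_2\coth(2x)>n_1+2n_2$ strictly on $(0,+\infty)$, one gets $\barDH\cdot V>(n_1+2n_2)V$, hence $\barDH>n_1+2n_2$.

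For sufficiency, I would use an Aubin-type continuity method, for instance along the family
$$u_t''(x)\,P(u_t'(x))=C_t\,e^{-tu_t(x)-(1-t)f_0(x)}J(x),$$
normalized by $u_t(0)=0$, with $f_0$ chosen so that $t=0$ is solved by direct quadrature in $y=u_0'(x)$. Working in the class of even, strictly convex solutions with prescribed asymptotic slope $\lambda$, openness follows from the implicit function theorem applied to the linearized Sturm-Liouville operator in an appropriate weighted function space at infinity, while $C^1$ and $C^2$ bounds on compact sets follow from $u_t'\in[0,\lambda]$ and bootstrapping the equation (degeneracy of $P$ near $\lambda$ being handled by matching with the asymptotic model). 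The main obstacle, as always for positive Kähler-Einstein-type problems, is the uniform $C^0$ estimate, which I would treat using Wang and Zhu's method as refined in \cite{DelKE}: the barycenter condition $\barDH>n_1+2n_2$ produces coercivity of the associated energy functional along the path, ruling out blow-up of $\|u_t\|_\infty$ and thus giving a solution at $t=1$.

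For the asymptotic expansion, given a solution, set $w(x)=\lambda-u'(x)\to 0^+$. Using the factorization $P(y)=y^{n_1+n_2}(\lambda-y)^k\tilde{P}(y)$ with $\tilde{P}(\lambda)>0$, the leading behavior of Equation~(\ref{eqn_genKE}) reads $-\lambda^{n_1+n_2}\tilde{P}(\lambda)\,w^k w'\sim c\,e^{-(\lambda-n_1-2n_2)x}$, which integrates to $w\sim c'\,e^{-\delta x}$ with $\delta=(\lambda-n_1-2n_2)/(k+1)$, and hence $u(x)=\lambda x+K_{0,0}+O(e^{-\delta x})$. To refine, I would plug the ansatz $u(x)=\lambda x+\sum_{j,k}K_{j,k}e^{-(j\delta+2k)x}$ into the equation and match order by order: the factors $e^{-2kx}$ arise from the Taylor expansion of $(2\sinh x)^{n_1}(2\sinh(2x))^{n_2}e^{-(n_1+2n_2)x}$ in $e^{-2x}$, while the factors $e^{-j\delta x}$ come from expanding $P(\lambda-w)$ near $\lambda$ together with $e^{-u+\lambda x}$. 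Each coefficient $K_{j,k}$ is then determined by a linear equation in terms of the previous ones, and convergence of the expansion follows from a contraction-mapping argument in weighted $C^k$ spaces near infinity.
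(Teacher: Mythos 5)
Your proposal is correct and follows essentially the same route as the paper. The necessity argument is the same integral identity (multiply the equation by $u'$, integrate, integrate by parts, and use $\coth>1$), which the paper runs along the whole continuity path to obtain the sharper bound $t<(\lambda-n_1-2n_2)/(\lambda-\barDH)$; the sufficiency uses the same continuity method with a reference potential, openness via weighted analysis (the paper is more explicit that one must enlarge the domain by an asymptotic cokernel direction $\mathbb{R}v_0$), and a Wang--Zhu-type $C^0$ estimate as in \cite{DelKE}; and the asymptotic expansion rests on the same substitution $w=\lambda-u'$, the factorization of $P$ near $\lambda$, and order-by-order matching.
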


\subsection{Geometric origin of the equation: the Kähler-Einstein equation on rank one horosymmetric spaces}
\label{sec:facets}

Let $G/H$ be a rank two complex symmetric space, with corresponding involution $\sigma$. 
Choose a simple root $\ha_2$ of $G$ which gives rise to one of the simple restricted roots
$\alpha_2=\ha_2-\sigma(\ha_2)$. 
Recall from \cite{DCP83} that $\sigma$ induces a permutation 
of simple roots $\bar{\sigma}$ (caracterized by the fact that 
$\sigma(\ha)+\bar{\sigma}(\ha)$ is fixed by $\sigma$, though non-trivial in general).
Let $P$ denote the parabolic subgroup of $G$ containing $T$ such that 
$\ha_2$ and $\bar{\sigma}(\ha_2)$ are the only simple roots of $G$ which are not roots
of $P$. The Lie algebra of $P$ writes 
$\mathfrak{p}=\mathfrak{p}^r \oplus \mathfrak{l}_a \oplus \mathfrak{l}_b$
where $\mathfrak{p}^r$ is the Lie algebra of the radical of $P$, 
$\sigma$ induces a rank one (indecomposable) symmetric space on the 
semisimple factor $\mathfrak{l}_a$, and the semisimple factor 
$\mathfrak{l}_b$ is fixed by $\sigma$.

Let $L_a$ denote the simply connected semisimple group with Lie algebra 
$\mathfrak{l}_a$. There is a natural action of $P$ on the symmetric 
space $L_a/N_{L_a}(L_a^{\sigma})$, and we build from this data a rank one 
horosymmetric space $G/H_2$ under the action of $G$ by parabolic induction: 
$G/H_2$ is the quotient of $G\times L_a/N_{L_a}(L_a^{\sigma})$ by the 
diagonal action of $P$ given by $p\cdot (g,x)=(gp^{-1},p\cdot x)$. 
In order to match with the conventions of \cite{DelHoro}, 
if we let $L$ denote the Levi subgroup of $P$ containing $T$, then the involution 
of $L$ corresponding to $G/H_2$ in the definition of \cite{DelHoro} is 
the involution $\sigma_2$ defined at the Lie algebra level by $\sigma_2=\sigma$ 
on $\mathfrak{l}_a$, and $\sigma_2$ equal to the identity on the other factors 
$\mathfrak{z}(\mathfrak{l})$ and $\mathfrak{l}_b$. 

The horosymmetric space thus constructed is actually exactly the open $G$-orbit 
in the $G$-stable prime divisor $D_2$ of the wonderful compactification of $G/N_G(H)$
corresponding to the root $\alpha_2$, as one may deduce from \cite{DCP83}, or 
with some different details, from \cite{DelHoro}. 
We will call such a horosymmetric space a facet of the symmetric space $G/H$.

Let $\Roots_{Q^u}$ denote the positive roots of $G$ which have a positive coefficient 
in $\ha_2$ or $\bar{\sigma}(\ha_2)$, and let $\Roots_{a}^+$ denote the roots 
of $L_a$ (identified with roots of $G$) which are not fixed by $\sigma$. 
The restricted root system of $(L_a,\sigma|_{L_a})$ is of rank one, hence 
there are at most two possible positive restricted roots. 
We fix a simple restricted root, denoted by $\alpha_1$ (it actually corresponds 
exactly to the second simple restricted root of $G/H$). We let $n_1$ denote the 
multiplicity of $\alpha_1$, and $n_2$ denote the multiplicity of $2\alpha_1$, 
which is zero if $2\alpha_1$ is not a restricted root. 

In the situation we described above, there exists a unique \emph{colored} $\mathbb{Q}$-Fano 
compactification of $G/H_2$. This is easily seen by the classification of $\mathbb{Q}$-Fano 
compactifications of $G/H_2$ \emph{via} $\mathbb{Q}$-$G/H_2$-Gorenstein polytopes 
by Gagliardi and Hofscheier \cite{GH15_fano} and 
using the description of the colored data of horosymmetric homogeneous spaces, 
highlighted in \cite{DelHoro}. 
Note that there may exist another, non-colored $\mathbb{Q}$-Fano compactification of $G/H_2$, 
we just focus on the colored one here. 
Let $Y$ denote this colored $\mathbb{Q}$-Fano compactification of $G/H_2$. 
The moment polytope $\Delta$ for $Y$ is easily determined as 
the intersection with the positive restricted Weyl chamber of 
the line parallel to $\alpha_1$ passing through $\varpi$. 

In this setup, Theorem~\ref{thm:ODE} has the following consequence:
let $\barDH$ denote the weighted barycenter of $\Delta$ with respect to the 
Lebesgue measure, with weight the Duistermaat-Heckman polynomial of $G/H$. 

\begin{cor}
\label{cor:facets}
The $\mathbb{Q}$-Fano variety $Y$ admits a (singular) Kähler-Einstein metric 
if and only if $\barDH>n_1/2+n_2$. 
\end{cor}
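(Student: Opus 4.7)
The plan is to reduce the Kähler--Einstein equation on $Y$ to an instance of the one--variable ODE~(\ref{eqn_genKE}) and then invoke Theorem~\ref{thm:ODE}. Since $Y$ is a colored $\mathbb{Q}$-Fano compactification of a rank one horosymmetric space, the framework of \cite{DelHoro} translates the existence of a (singular) $K$-invariant Kähler--Einstein metric on $Y$ into the existence of a strictly convex $W$-invariant solution of a real Monge--Ampère equation on the one-dimensional vector space $\mathfrak{a}_1=\mathbb{R}\alpha_1$. In rank one, this Monge--Ampère equation is precisely a second order ODE in one variable with separated right- and left-hand sides.

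Second, I would verify that the resulting ODE has the form~(\ref{eqn_genKE}) and that the hypotheses on $P$ and $\lambda$ hold. The right-hand side arises from $\prod_{\alpha}\sinh^{m_{\alpha}}(\alpha)$ restricted to the roots of the symmetric factor $L_a/N_{L_a}(L_a^{\sigma})$, namely $\alpha_1$ with multiplicity $n_1$ and (possibly) $2\alpha_1$ with multiplicity $n_2$, producing exactly $J(x)=\sinh^{n_1}(x)\sinh^{n_2}(2x)$. The left-hand side weight $P(y)$ comes from evaluating the linear forms $\langle\alpha,\cdot\rangle$ along the line through $\varpi$ parallel to $\alpha_1$: the symmetric-factor roots contribute the factor $y^{n_1+n_2}$, the roots of $\Roots_{Q^u}$ contribute a factor $\tilde{P}(y)$ which is even and non-vanishing at $0$ thanks to the reflection symmetry of $\rRoots^{+}\setminus\{\alpha_1\}$ recalled in Section~2.1, and the remaining factor $(\lambda-y)^{k}$ records the vanishing order of the root product along the face of the Weyl chamber that corresponds, under the colored data of $Y$, to the $G$-stable divisor at the far end of the moment polytope $\Delta$. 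Positivity of $P$ on $]0,\lambda[$ follows from the description of $\Delta$ as the intersection of the positive Weyl chamber with the line through $\varpi$ parallel to $\alpha_1$.

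Third, I would match the numerical data. Under the affine identification of $\Delta$ with $[0,\lambda]$, the Duistermaat--Heckman polynomial $P_{DH}$ restricted to $\Delta$ pulls back, up to a positive multiplicative constant, to $P(y)$; hence the weighted barycenter $\barDH$ of the corollary equals the $\barDH$ of Theorem~\ref{thm:ODE}, up to the rescaling factor between the natural coordinate on $\Delta$ and the ODE variable $y=u'(x)$. This same rescaling identifies the threshold $n_1+2n_2$ appearing in Theorem~\ref{thm:ODE} with $n_1/2+n_2$ in Corollary~\ref{cor:facets}. Geometrically, $(n_1/2+n_2)\alpha_1$ is the $\alpha_1$-component of the half-sum $\varpi_a$ of positive restricted roots of the symmetric factor, so that the condition $\barDH>n_1/2+n_2$ is the expected Ding-type criterion stating that the barycenter of $\Delta$ lies strictly beyond $\varpi_a$ along $\alpha_1$.

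The main obstacle is the careful bookkeeping in the reduction to the ODE: one has to check that the root product along the line through $\varpi$ really factors as $y^{n_1+n_2}(\lambda-y)^{k}\tilde{P}(y)$ with $\tilde{P}$ satisfying the required parity and positivity, and one has to pin down the precise rescaling identifying the natural polytope coordinate with $y$, since this rescaling is responsible for the factor $2$ between the thresholds in Theorem~\ref{thm:ODE} and in the corollary. Once these two bookkeeping points are settled, Theorem~\ref{thm:ODE} directly yields Corollary~\ref{cor:facets}.
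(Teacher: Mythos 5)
Your plan reproduces the paper's own argument: translate the Kähler--Einstein equation on the colored rank-one horosymmetric compactification $Y$ into a one-variable ODE for the toric potential $u$ via the machinery of \cite{DelHoro}, check that the resulting equation has the form~(\ref{eqn_genKE}) with the right $J$ and $P$, match the polytope coordinate with the ODE variable $y=u'$ (the factor of $2$ you flag does account for $n_1+2n_2$ versus $n_1/2+n_2$, since the asymptotic condition from $\Delta$ is $u(x)-2\lambda|x|=O(1)$ while Theorem~\ref{thm:ODE} is stated with threshold $\lambda_{\mathrm{Thm}}=2\lambda$), and invoke Theorem~\ref{thm:ODE}.

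One point is genuinely missing, however. You assert that \cite{DelHoro} "translates the existence of a (singular) $K$-invariant Kähler--Einstein metric on $Y$ into" a solution of the ODE, but \cite{DelHoro} provides the dictionary for smooth $K$-invariant metrics on the open orbit $G/H_2$. Passing from a strictly convex solution $u$ with $u(x)-\lambda|x|=O(1)$ to a bona fide \emph{singular} Kähler--Einstein metric on the $\mathbb{Q}$-Fano variety $Y$ (and conversely) is not automatic: one must check that the $K$-invariant potential extends to a locally bounded metric on $K_Y^{-1}$ with full Monge--Ampère mass, and that the complement $Y\setminus G/H_2$ has codimension $\geq 2$, so that the results of \cite[Section~3]{BBEGZ} apply. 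The paper verifies these points explicitly, and without them the ODE solution does not yet give the claimed singular metric. Once that step is added, your plan matches the paper's proof.
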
 

\begin{proof}
Recall that $K$ denotes a maximal compact subgroup of $G$.
Let $h$ be a smooth $K$-invariant positively curved metric on the anticanonical 
line bundle $K_{G/H_2}^{-1}$, and denote by $\omega$ its curvature form. 
The second author introduced in \cite{DelHoro} an even one-variable (in this rank one case) 
convex function $u$ associated to $h$, 
called the toric potential, and computed the curvature form $\omega$ in terms of $u$. 
It allows to write the positive Kähler-Einstein $\Ric(\omega)=\omega$ on $G/H_2$ also 
in terms of $u$.
More precisely, with the right choices of normalizing constants,  
the positive Kähler-Einstein equation writes 
\begin{equation}
u''(x)\prod_{\gamma\in \Roots_{Q^u}}\langle\gamma,2\chi-u'(x)\alpha_1\rangle 
\frac{(u'(x))^{n_1+n_2}}{\sinh^{n_1}(x)\sinh^{n_2}(2x)} = e^{-u(x)},
\end{equation}
where $\chi=\sum_{\gamma\in \Roots_{Q^u}}\frac{\gamma+\sigma_1(\gamma)}{2}
=\varpi - (n_1/2+n_2)\alpha_1$.

Define the one-variable polynomial $P$ by 
\begin{align*}
P(y)& =y^{n_1+n_2}\prod_{\beta\in \Roots_{Q^u}}\big(\langle\beta,2\chi\rangle-\langle\beta,\alpha_1\rangle y\big) \\
&= y^{n_1+n_2}\prod_{\alpha\in \rRoots^+, \alpha_1\nmid \alpha}\frac{1}{2}\big(\langle\alpha,2\chi\rangle-\langle\alpha,\alpha_1\rangle y\big)^{m_{\alpha}}
\end{align*}
where the second equality holds because $\sigma(\chi)=-\chi$. 
With these notations, the equation may be written 
\[ u''P(u')=e^{-u}J \] 
where $J(x)=\sinh(x)^{n_1}\sinh(2x)^{n_2}$.
Note that $P(y)y^{-(n_1+n_2)}$ is even thanks to the symmetry of the positive root system. 
We may also check that $P$ is positive at $n_1+2n_2$. Indeed, $n_1+2n_2$ is positive, and we have 
$2\chi+(n_1+2n_2)\alpha_1=2\varpi$, which of course satisfies that 
$\langle\gamma,2\varpi\rangle>0$ for all $\gamma\in \rRoots^+$.
In fact, $P(y)$ is, up to a multiplicative constant, equal to $P_{DH}(2\varpi-(n_1+2n_2+y)\alpha_1)$.

To see the geometric origin of the condition on the asymptotic behavior of the 
solutions $u$, we turn now to the $G$-equivariant compactification $Y$ of $G/H_1$. 
Assume that the metric $h$ extends to a locally bounded metric on $K_Y^{-1}$. 
Then, again by \cite{DelHoro}, we know that the toric 
potential $u$ has an asymptotic behavior controlled by the moment polytope $\Delta$ 
of $X$. More precisely, $\Delta$ is the translate by $\chi$ of a segment of 
the form $[0,\lambda \alpha_1]$, where 
$\lambda$ is easily derived from the description of $\Delta$: $\lambda$ is the maximum 
of all real numbers such that $\langle\alpha_2,\chi+\lambda \alpha_1\rangle\geq 0$.
The moment polytope controls the asymptotic behavior of $u$ in the sense that 
$u(x)-2\lambda|x|$ is bounded.
The value of $k$ in this setting is easily derived from the restricted root 
system by definition of $\lambda$: the number $k$ is the sum of the multiplicity 
of $\alpha_2$ and of the (possibly zero) multiplicity of $2\alpha_2$. 

Theorem~\ref{thm:ODE} thus applies to our situation, and allows to conclude.
Indeed, in the situation described, the complement $X\setminus G/H_2$ has 
codimension at least two. Furthermore, one can check that here, a locally 
bounded $K$-invariant metric on $X$ which is smooth on $G/H_2$ has full Monge-Ampère mass. 
As a consequence, finding a smooth solution $u$ to the equation, with $u(x)-|\lambda x|$ 
bounded, is equivalent to the existence of a singular Kähler-Einstein metric on $X$
(see \cite[Section~3]{BBEGZ}). 
\end{proof}

More generally, for horosymmetric (but not horospherical) spaces of rank one 
(not necessarily induced by a rank two symmetric space) the equation for 
Kähler-Einstein metrics will be of the form of the 
equation we study. Furthermore, there are variants of the Kähler-Einstein equation 
that will also be encoded by an equation of the same form. 
For example, if we consider a pair $(Y,\mu D)$ where $Y$ is a non-colored $G$-equivariant 
compactification of $G/H_2$ such that $D:=Y\setminus G/H_2$ is a divisor, $\mu>0$
and $(Y,\mu D)$ is a klt log Fano pair, then the equation for log Kähler-Einstein 
metrics in this setting is the same as above, but the real parameter $\lambda$ 
controlling the asymptotic behavior of $u$ varies with $\mu$. 

Other examples may be obtained by considering say a non-colored compactification $Y$ of 
$G/H_2$ (thus equipped with a fibration structure $\pi:Y\rightarrow G/P$) 
and considering a twisted Kähler-Einstein equation of the form 
\[ \Ric(\omega)=\omega\pm \pi^*\omega_P \]
where $\omega_P$ is some fixed $K$-invariant Kähler metric on $G/P$. 
The corresponding equation in terms of $u$ would imply a modified polynomial. 
We leave it to the interested reader to deduce the precise equation from 
\cite{DelHoro}.   

                                                                                                                                                                                                                \subsection{Existence on facets of rank two symmetric spaces}
\label{sec:checks}

In the remaining of this section, we check when the condition from Theorem~\ref{thm:ODE}
is satisfied in the examples described previously. 
Table~\ref{table:ISS2} shows the possible examples of 
indecomposable symmetric spaces of rank two (see \cite[p.532]{Hel78}). 
Note that we do not give all possible 
cases of a same given type (\emph{e.g.} the group $\SL_3\times \SL_3/\SL_3$ is also a representant 
of the group type $A_2$). 
Furthermore, we chose parameters to avoid redundancy, but some elements of the infinite 
families may also be known as representant of other families of symmetric spaces:
for example type BDI may be considered of type CI for $r=5$, of type AIII for $r=6$, 
and of type DIII for $r=8$. 
To check the condition, we reduce to three situations with parameters depending on 
the symmetric space considered. Namely we separate the possible restricted root systems 
and take as parameters the multiplicities of restricted roots as in Figure~\ref{fig:types}. 

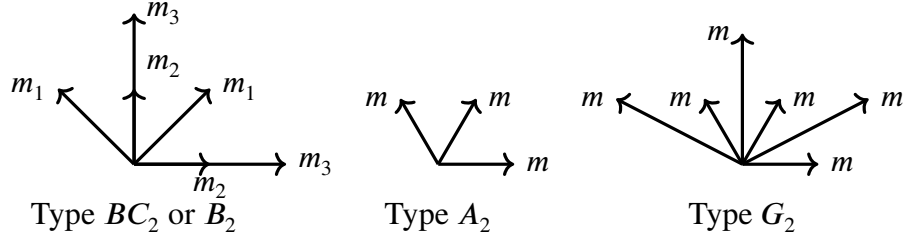
\begin{figure}
\centering
\begin{tikzpicture}
\draw[very thick] [->] (0,0) -- (1,0) node[below]{$m_2$} ;
\draw[very thick] [->] (0,0) -- (2,0) node[right]{$m_3$} ;
\draw[very thick] [->] (0,0) -- (1,1) node[right]{$m_1$} ;
\draw[very thick] [->] (0,0) -- (0,1) node[above right]{$m_2$} ;
\draw[very thick] [->] (0,0) -- (0,2) node[right]{$m_3$} ;
\draw[very thick] [->] (0,0) -- (-1,1) node[left]{$m_1$} ;
\draw (0,-.7) node{Type $BC_2$ or $B_2$};

\draw[very thick] [->] (0+4,0) -- (1+4,0) node[right]{$m$} ;
\draw[very thick] [->] (0+4,0) -- (0.5+4,0.866) node[right]{$m$} ;
\draw[very thick] [<-] (-0.5+4,0.866) node[left]{$m$} -- (0+4,0) ;
\draw (4,-.7) node{Type $A_2$};

\draw[very thick] [->] (0+8,0) -- (1+8,0) node[right]{$m$} ;
\draw[very thick] [->] (0+8,0) -- (0.5+8,0.866) node[right]{$m$} ;
\draw[very thick] [->] (0+8,0) -- (-0.5+8,0.866) node[left]{$m$} ;
\draw[very thick] [->] (0+8,0) -- (0+8,2*0.866) node[left]{$m$} ;
\draw[very thick] [->] (0+8,0) -- (1.66+8,0.866) node[right]{$m$} ;
\draw[very thick] [->] (0+8,0) -- (-1.66+8,0.866) node[left]{$m$} ;
\draw (8,-.7) node{Type $G_2$};
\end{tikzpicture}
\caption{Types of rank two root systems}
\label{fig:types}
\end{figure}

\begin{table}
\begin{equation*}
\begin{array}{ccccc}
\toprule
\mathrm{Type} & \mathrm{Parameter} & \mathrm{One~Representant} & \rRoots & \mathrm{multiplicities} \\
\midrule
\mathrm{AI} & & \SL_3/\SO_3 
& A_2 & 1 \\
A_2 & & \PGL_3\times \PGL_3/\PGL_3 
& - & 2 \\
\mathrm{AII} & & \SL_6/\Sp_6 
& - & 4 \\
\mathrm{EIV} & & E_6/F_4 & - & 8 \\
\mathrm{AIII} & r \geq 5 & \SL_r/S(\GL_2\times \GL_{r-2}) & BC_2 & (2,2r-8,1) \\
\mathrm{CII} & r\geq 5 & \Sp_{2r}/\Sp_4\times \Sp_{2r-4} & - & (4,4r-16,3) \\
\mathrm{DIII} & & \SO_{10}/\GL_5 & - & (4,4,1) \\
\mathrm{EIII} & & E_6/\SO_{10}\times\SO_2 & - & (6,8,1) \\
\mathrm{BDI} & r\geq 5 & \SO_r/S(O_2\times O_{r-2}) & B_2 & (1,r-4,0) \\
B_2& & \SO_5\times\SO_5/\SO_5 
& - & (2,2,0) \\
\mathrm{CII} & r=4 & \Sp_{8}/\Sp_4\times \Sp_{4} 
& - & (3,4,0) \\
\mathrm{G} & & G_2/\SO_4 & G_2 & 1 \\
G_2& & G_2\times G_2/G_2 & - & 2 \\ 
\bottomrule 
\end{array}
\end{equation*}
\caption{Indecomposable symmetric spaces of rank two}
\label{table:ISS2}
\end{table}

\subsubsection{Restricted root system of type $BC_2$ or $B_2$}

Note that $m_3=0$ if the root system is of type $B_2$ and else 
it is of type $BC_2$.
The possibilities for $(m_1,m_2,m_3)$ are given in Table~\ref{table:ISS2}.
We denote the simple restricted root with multiplicity $m_1$ by $\alpha$ 
and the simple restricted root with multiplicity $m_2$ by $\beta$. 
Let $\tilde{\alpha}= \alpha+\beta$ and 
$\tilde{\beta}= \alpha/2+\beta$.
We have 
\[ \varpi = (m_1+m_2/2+m_3)\alpha+(m_1+m_2+2m_3)\beta \]
and the Duistermaat-Heckman polynomial corresponding to the 
symmetric space is, in several choices 
of coordinates and up to a (different) constant factor, as follows: 
\begin{equation*}
P_{DH}(y\tilde{\alpha}+x\beta)= x^{m_2+m_3}(y^2-x^2)^{m_1}y^{m_2+m_3}
\end{equation*}
and 
\begin{equation*}
P_{DH}(w\alpha+t\tilde{\beta})=w^{m_1}t^{m_1}(t^2-(2w)^2)^{m_2+m_3}.
\end{equation*}

Depending on the choice $\alpha_1=\alpha$ or $\alpha_1=\beta$, 
there are two possible facets of $G/H$ as in the last section. 
We check when the condition of Corollary~\ref{cor:facets} is satisfied 
in each case.  
From the description in Section~\ref{sec:facets}, these conditions translate 
respectively as  
\begin{equation*}
\int_{x=0}^{m_1+m_2/2+m_3}
(x-(m_2/2+m_3))
P_{DH}((m_1+m_2/2+m_3)\tilde{\alpha})+x\beta)dx
>0
\end{equation*} 
and 
\begin{equation*}
\int_{w=0}^{m_1/2+m_2/2+m_3}
(w-m_1/2)
P_{DH}(w\alpha+(m_1+m_2+2m_3)\tilde{\beta})dx
>0.
\end{equation*} 
They may be interpreted as conditions on the weighted barycenters of the segments 
in Figure~\ref{fig:BC2pol}.

\begin{figure}
\centering
\begin{tikzpicture}
\draw[very thick] [->] (0,0) -- (1,0) node[below]{$\beta$} ;
\draw[very thick] [->] (0,0) -- (2,0);
\draw[very thick] [->] (0,0) -- (1,1);
\draw[very thick] [->] (0,0) -- (0,1) node[left]{$\tilde{\alpha}$};
\draw[very thick] [->] (0,0) -- (0,2);
\draw[very thick] [->] (0,0) -- (-1,1) node[left]{$\alpha$} ;
\draw (1/2,1/2) node{$\bullet$}; 
\draw (1/2+.3,1/2-.1) node{$\tilde{\beta}$};
\draw[thick, dashed] (0,0) -- (0,6);
\draw[thick, dashed] (0,0) -- (5,5);
\draw (2,4) node{$\bullet$}; 
\draw (2.3,4.3) node{$\varpi$};
\draw[thick] (0,4) -- (4,4);
\draw[thick] (0,6) -- (3,3);
\draw (0,4) node{$\bullet$}; 
\draw (-1.1,4.2) node{$(m_1+\frac{m_2}{2}+m_3)\tilde{\alpha}$};
\draw (3,3) node{$\bullet$}; 
\draw (4.2,2.8) node{$(m_1+m_2+2m_3)\tilde{\beta}$};
\end{tikzpicture}
\caption{Moment polytopes for type $BC_2$}
\label{fig:BC2pol}
\end{figure}
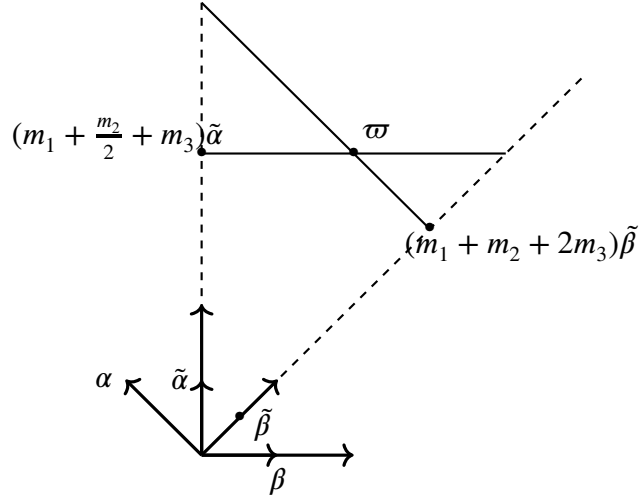

Using the changes of variables $u=(x/(m_1+m_2/2+m_3))^2$ and 
$v=(w/(m_1/2+m_2/2+m_3))^2$ and the expression of $P_{DH}$, 
we get the equivalence of the above conditions with, respectively, 
\begin{equation*}
\int_{u=0}^1 u^{(m_2+m_3)/2}(1-u)^{m_1}du 
>
\frac{m_2+2m_3}{2m_1+m_2+2m_3}\int_{u=0}^1 u^{(m_2+m_3-1)/2}(1-u)^{m_1}du
\end{equation*} 
and 
\begin{equation*}
\int_{v=0}^1 v^{m_1/2}(1-v)^{m_2+m_3}dv 
>
\frac{m_1}{m_1+m_2+2m_3}\int_{v=0}^1 v^{(m_1-1)/2}(1-v)^{m_2+m_3}dv.
\end{equation*}

These are inequalities on beta functions: recall that the beta function 
is a function of two variables defined by 
\begin{equation*}
B(\lambda,\mu)=B(\mu,\lambda)=\int_{t=0}^{1}t^{\lambda-1}(1-t)^{\mu-1}dt
\end{equation*}
Hence we want to check 
\begin{equation}
\label{cond_1}
B((m_2+m_3)/2+1,m_1+1)>\frac{m_2+2m_3}{2m_1+m_2+2m_3}B((m_2+m_3+1)/2,m_1+1)
\end{equation}
and 
\begin{equation}
\label{cond_2}
B(m_1/2+1,m_2+m_3+1)>\frac{m_1}{m_1+m_2+2m_3}B((m_1+1)/2,m_2+m_3+1).
\end{equation}

We first check these conditions by direct computation for the 
examples that do not form infinite families. In each case, we 
compute the left-hand side minus the right-hand side to check the condition.
\begin{center}
\begin{tabular}{ccc}
\toprule
$(m_1,m_2,m_3)$ & condition~(\ref{cond_1}) & condition~(\ref{cond_2}) \\
\midrule
$(2,2,0)$ & $41/1260>0$ & $1/140>0$ \\
$(3,4,0)$ & $43/7700>0$ & $83/30030>0$ \\
$(4,4,1)$ & $101/63063>0$ & $2533/1801800>0$ \\
$(6,8,1)$ & $5513/70114902>0$ & $63407/743642900>0$ \\
\bottomrule
\end{tabular}
\end{center}

For the infinite families, 
we use the expression of the beta function in terms 
of the gamma function: $B(x,y)=\Gamma(x)\Gamma(y)/\Gamma(x+y)$. 
Recall that the factorial of a positive integer is equal to the gamma function 
evaluated at the consecutive integer, and that Legendre's duplication formula 
yields the following expression, given a positive integer $p$: 
$\Gamma(p+1/2)=(2p)!\sqrt{\pi}/(p!4^p)$.

Since they are proved differently, we separate the proof for condition~(\ref{cond_1}) 
and the proof for condition~(\ref{cond_2}).

\begin{lem}
Condition~(\ref{cond_1}) is satisfied for all infinite families.
\end{lem}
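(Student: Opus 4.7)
My plan is to reduce condition~(\ref{cond_1}) to an explicit polynomial inequality in $r$ for each of the three infinite families from Table~\ref{table:ISS2} whose restricted root system is of type $B_2$ or $BC_2$: BDI with $(m_1,m_2,m_3)=(1,r-4,0)$, AIII with $(2,2r-8,1)$, and CII with $(4,4r-16,3)$, in every case for $r\geq 5$.

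The key feature to exploit is that $m_1$ is a small fixed integer in each family, so the closed form
\[ B(a,m_1+1)=\frac{m_1!}{a(a+1)\cdots(a+m_1)} \]
turns both beta functions in~(\ref{cond_1}) into explicit rational functions of $r$. Setting $a=(m_2+m_3+1)/2$ and noting that $(m_2+m_3)/2+1=a+1/2$, the ratio $B(a+1/2,m_1+1)/B(a,m_1+1)$ becomes a quotient of two products of $m_1+1$ linear factors in $r$. After clearing denominators, condition~(\ref{cond_1}) reads as a polynomial inequality in $r$ of degree at most $m_1+1\leq 5$.

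I would then handle the three families one at a time. In the BDI case, the inequality collapses after a short computation to the $r$-independent statement $3>0$, settling the whole family at once. In the AIII case, one obtains an inequality equivalent to $4r^2-6r-1>0$, which is positive for every $r\geq 2$ and so in particular on $r\geq 5$. The CII case is of the same flavour: cross-multiplying produces an inequality between two products of five linear factors in $4r$, whose difference has positive leading coefficient and is already comfortably positive at $r=5$, so positivity on $[5,\infty)$ follows by elementary estimates on a low-degree polynomial.

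The only real obstacle is arithmetic bookkeeping—keeping track of the factors of $2$ arising from the half-integer shifts in $a$, and of the differing shapes of $(m_1,m_2,m_3)$ across the three families. Because the resulting polynomial inequalities are of degree at most five and the values at $r=5$ are already well separated from zero (as foreshadowed by the explicit finite examples in the table), no subtler analysis is required.
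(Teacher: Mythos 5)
Your proposal is correct and follows essentially the same path as the paper: both reduce condition~(\ref{cond_1}) to a low-degree polynomial inequality in the free parameter $r$ and then verify its positivity. The only difference is technical — you invoke the elementary factorial closed form for $B(a,m_1+1)$ with integer second argument, whereas the paper passes through the gamma function and Legendre's duplication formula — but the two manipulations produce the very same polynomial comparison (a degree-four polynomial with positive leading coefficient in the CII case).
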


\begin{proof}
This first condition is proved by direct computation. 
We provide details for the case $(m_1,m_2,m_3)=(4,4m-16,3)$ ($m\geq 4$). 
We consider the quotient of the left-hand side by the right-hand side and want to check that it is 
strictly greater than one. The quotient writes: 
\begin{align*}
& \frac{(4m-2)\Gamma(2m-6+1/2)\Gamma(2m-1)}{(4m-10)\Gamma(2m-6)\Gamma(2m-1+1/2)} \\ 
&\qquad =\frac{(4m-2)(4m-12)!(2m-2)!(2m-1)!4^{2m-1}}{(4m-10)(2m-7)!(4m-2)!(2m-6)!4^{2m-6}} \\
&\qquad = \frac{(4m-2)(4m-4)(4m-6)(4m-8)(4m-12)}{(4m-3)(4m-5)(4m-7)(4m-9)(4m-11)}
\end{align*}
It is greater than one if and only if the polynomial obtained by subtracting the 
denominator from the numerator is positive. This last polynomial is 
\begin{equation*}
768 m^4 - 5760 m^3 + 14880 m^2 - 15780 m + 5787
\end{equation*}
it has positive leading coefficient and is of degree four 
hence we may compute its roots and check 
that they are all strictly smaller than four, which means that condition~(\ref{cond_1})
is satisfied for $m\geq 4$.
\end{proof}

\begin{lem}
Condition~(\ref{cond_2}) is satisfied for all infinite families.
\end{lem}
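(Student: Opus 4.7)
The plan is to follow the same template as the proof of the previous lemma, treating each of the three infinite families separately: AIII with $(m_1,m_2,m_3)=(2,2r-8,1)$, CII with $(4,4r-16,3)$, and BDI with $(1,r-4,0)$, all for $r\geq 5$. For each family I would express the two beta function values appearing in condition~(\ref{cond_2}) via $B(x,y)=\Gamma(x)\Gamma(y)/\Gamma(x+y)$, apply Legendre's duplication formula $\Gamma(p+\frac{1}{2})=\sqrt{\pi}(2p)!/(p!\,4^p)$ to eliminate the $\sqrt{\pi}$ factors, and reduce the inequality to the positivity of an explicit polynomial in $r$ whose leading coefficient is positive.

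For the AIII and CII families the integer $m_1$ is even, so the half-integer argument $(m_1+1)/2$ appears only on the right-hand side of~(\ref{cond_2}) and the computation is a direct parallel of the one performed for condition~(\ref{cond_1}) above. After cancellation the quotient of the two sides becomes a product of a few linear factors in $r$, and the required positivity reduces to a polynomial whose real roots are computed explicitly and checked to lie strictly below $5$.

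The BDI family requires a separate handling because $m_1=1$ is odd: here it is $m_1/2+1=3/2$ that is half-integer while $(m_1+1)/2=1$ is an integer. Setting $k=r-4\geq 1$, after applying the duplication formula the inequality reduces to
\[
4^{k+2}>\frac{2(k+2)}{k+1}\binom{2k+4}{k+2}.
\]
This is a concrete estimate on central binomial coefficients, and I would conclude it using the standard bound $\binom{2n}{n}<4^n/\sqrt{\pi n}$ (valid for every $n\geq 1$) applied at $n=k+2$: the inequality is then implied by $\pi(k+1)^2>4(k+2)$, a quadratic in $k$ equal to $4\pi-12>0$ at $k=1$ and strictly increasing on $k\geq 1$.

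The main obstacle I expect is the bookkeeping: each family produces its own polynomial and one must verify in each case that the polynomial has no root in $[5,\infty)$. CII is likely the most computationally heavy because its multiplicities are the largest and so the resulting polynomial has the highest degree; BDI is delicate because of the parity of $m_1$ but is the cleanest to dispatch once the central binomial estimate is invoked; AIII is the shortest.
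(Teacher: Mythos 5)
Your BDI argument is correct and is genuinely different from (and arguably cleaner than) the paper's method, but your claim about AIII and CII contains a real error, which is in fact the exact obstruction the paper's authors explicitly flag.

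For AIII ($m_1=2$) and CII ($m_1=4$) you claim that, after applying Legendre's duplication formula, the quotient of the two sides of~(\ref{cond_2}) ``becomes a product of a few linear factors in $r$,'' in direct parallel with condition~(\ref{cond_1}). This is false. Look at what happens in~(\ref{cond_1}): both gamma-ratios that carry a half-integer argument have arguments depending linearly on the parameter, and those arguments differ by the \emph{fixed} integer $m_1+1$; after Legendre the two powers of $4$ cancel exactly and one is left with a finite product of linear factors. In~(\ref{cond_2}) the situation is asymmetric: one half-integer gamma has parameter-independent argument ($\Gamma(5/2)$ for CII, $\Gamma(3/2)$ for AIII) while the other has parameter-dependent argument (e.g.\ $\Gamma(4m-10+\tfrac12)$), because the two first arguments of the beta functions now differ only by $1/2$ while the parameter lives entirely in the second argument. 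Legendre then produces an uncancelled factor $4^{4m-10}$ (resp.\ $4^{2r-5}$) and the quotient is of the form $\mathrm{cst}\cdot(\text{linear in }m)\cdot 4^{-N}\binom{2N}{N}$ with $N$ linear in $m$. This grows like $\sqrt{m}$, not like a rational function, and cannot be reduced to a polynomial positivity statement. This is precisely why the paper states that ``direct computation does not seem tractable'' and instead proves that $Q(m+1)/Q(m)$ --- which \emph{is} a rational function, since the factorials telescope between consecutive values of $m$ --- exceeds $1$, and separately checks the base case $Q(4)>1$.

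The good news is that your approach for BDI actually contains the fix: the very same bound $\binom{2N}{N}<4^N/\sqrt{\pi N}$ applies, in the form of the matching lower bound $\binom{2N}{N}>4^N/(2\sqrt N)$, to show directly that $Q(m)>1$ in all three families. For CII one gets $Q(m)\geq\frac{4m-6}{3\sqrt{4m-10}}$, and $(4m-6)^2-9(4m-10)=16m^2-84m+126$ has negative discriminant; for AIII the reduction is even more immediate. So the right statement is not that the AIII/CII computation parallels~(\ref{cond_1}), but that your BDI method is the uniform one: it handles all three infinite families in one stroke via a central binomial estimate, giving a proof that is shorter and more conceptual than the paper's ``ratio increasing $+$ base case'' argument. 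As written, though, the AIII/CII step is wrong and would not go through.
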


\begin{proof}
For this condition, direct computation does not seem tractable, so we first prove 
that the quotient of the left-hand side by the right-hand side is increasing with the parameter $m$, 
for the sequence of parameters considered,  
then check that it is greater than one for the first value of the parameter.
Let us again give details on the case $(m_1,m_2,m_3)=(4,4m-16,3)$ for $m\geq 4$. 
We denote the quotient of the left-hand side by the right-hand side by $Q(m)$. 
We first compute
\begin{align*}
Q(m)&= \frac{(4m-6)\Gamma(3)\Gamma(4m-10+1/2)}{4\Gamma(2+1/2)\Gamma(4m-9)} \\
& = \frac{(4m-6)((8m-20)!)}{3\cdot 2^{8m-21}((4m-10)!)^2}
\end{align*}
Then we have 
\begin{align*}
\frac{Q(m+1)}{Q(m)}&=\frac{(4m-2)((4m-10)!)^2(8m-12)!}{2^8(4m-6)((4m-6)!)^2(8m-20)!} \\
&= \frac{(2m-1)(8m-13)(8m-15)(8m-17)(8m-19)}{(2m-3)(8m-12)(8m-14)(8m-16)(8m-18)}
\end{align*}
Again, it is greater than one if and only if the polynomial obtained by subtracting the 
denominator from the numerator is positive. This last polynomial is 
\begin{equation*}
4096 m^4 - 35584 m^3 + 113792 m^2 - 159086 m + 82167
\end{equation*}
it has positive leading coefficient and is of degree four 
hence we may compute its roots and check 
that they are all strictly smaller than four, which means that $Q(m)$ is 
increasing and thus $Q(m)\geq Q(4)$ for all $m\geq 4$. Finally, direct computation 
shows that $Q(4)=385/256>1$ hence condition~(\ref{cond_2})
is satisfied for $m\geq 4$.
\end{proof}

\subsubsection{Restricted root system of type $A_2$}

We denote the simple restricted roots by $\alpha$ and $\beta$.
There is an obvious symmetry exchanging the roles of both. 
Let $\tilde{\alpha}= \alpha+\beta/2$.
We have 
\begin{equation*}
\varpi = m(\alpha+\beta)= m\tilde{\alpha}+m\beta/2
\end{equation*}
The Duistermaat-Heckman polynomial $P_{DH}$ reads, 
up to a constant factor, as follows:  
\begin{equation*}
P_{DH}(y\tilde{\alpha}+x\beta)= 
x^m((3y/2)^2-x^2)^m.
\end{equation*}

The condition from Theorem~\ref{thm:ODE} reads as 
\begin{equation}
\label{cond_A}
\int_{x=0}^{3m/2}
(x-m/2)
x^m((3m/2)^2-x^2)^m dx
>0.
\end{equation}
It may again be interpreted as a condition on the weighted barycenter of the segment in 
Figure~\ref{fig:A2pol}.
Condition~(\ref{cond_A}) is easily checked to hold for the possible values 
of $m$ by direct computation. 

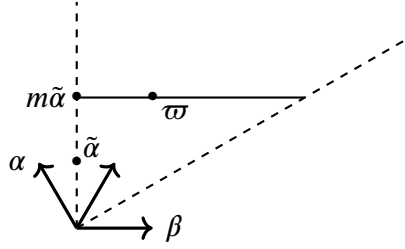
\begin{figure}
\centering
\begin{tikzpicture}
\draw[very thick] [->] (0,0) -- (1,0) node[right]{$\beta$} ;
\draw[very thick] [->] (0,0) -- (0.5,0.866)  ;
\draw[very thick] [<-] (-0.5,0.866) node[left]{$\alpha$} -- (0,0) ;
\draw (0,0.866) node{$\bullet$}; 
\draw (0.2,0.866+.2) node{$\tilde{\alpha}$};
\draw[thick, dashed] (0,0) -- (0,3);
\draw[thick, dashed] (0,0) -- (5*0.866,5/2);
\draw (1,2*0.866) node{$\bullet$}; 
\draw (1+.3,2*0.866-.2) node{$\varpi$};
\draw[thick] (0,2*0.866) -- (3,2*0.866);
\draw (0,2*0.866) node{$\bullet$}; 
\draw (-.4,2*0.866) node{$m\tilde{\alpha}$};
\end{tikzpicture}
\caption{Moment polytope for type $A_2$}
\label{fig:A2pol}
\end{figure}

\subsubsection{Restricted root system of type $G_2$}

We denote the long simple restricted root by $\alpha$ 
and the short simple restricted root by $\beta$. 
Let $\tilde{\alpha}= \alpha+3\beta/2$ and 
$\tilde{\beta}= \alpha/2+\beta$.
We have 
\[ \varpi = 3m\alpha+5m\beta \]
and the Duistermaat-Heckman polynomial $P_{DH}$ reads, in several choices 
of coordinates and up to a constant factor, as follows: 
\begin{equation*}
P_{DH}(y\tilde{\alpha}+x\beta)= 
x^my^m((3y/2)^2-x^2)^m((y/2)^2-x^2)^m
\end{equation*}
and 
\begin{equation*}
P_{DH}(w\alpha+t\tilde{\beta})=
w^mt^m((t/2)^2-w^2)^m((t/2)^2-(3w)^2)^m.
\end{equation*}

The conditions from Corollary~\ref{cor:facets} corresponding to the choices 
$\alpha_1=\alpha$ and $\alpha_1=\beta$ read as (see Figure~\ref{fig:G2pol})
\begin{figure}
\centering
\begin{tikzpicture}
\draw[very thick] [->] (0,0) -- (1,0) node[right]{$\beta$} ;
\draw[very thick] [->] (0,0) -- (0.5,0.866) ;
\draw[very thick] [->] (0,0) -- (-0.5,0.866) ;
\draw[very thick] [->] (0,0) -- (0,2*0.866);
\draw[very thick] [->] (0,0) -- (1.66,0.866) ;
\draw[very thick] [->] (0,0) -- (-1.66,0.866) node[left]{$\alpha$} ;
\draw (0,0.866) node{$\bullet$}; 
\draw (0.2,0.866+.2) node{$\tilde{\alpha}$};
\draw (1/4,0.866/2) node{$\bullet$}; 
\draw (1/4+.3,0.866/2+.1) node{$\tilde{\beta}$};
\draw[thick, dashed] (0,0) -- (0,6);
\draw[thick, dashed] (0,0) -- (7*1/2,7*0.866);
\draw (1,6*0.866) node{$\bullet$}; 
\draw (1+.2,6*0.866+.2) node{$\varpi$};
\draw (0,6*0.866) node{$\bullet$}; 
\draw (-.6,6*0.866+.2) node{$3m\tilde{\alpha}$};
\draw[thick] (0,6*0.866) -- (3,6*0.866);
\draw (5*1/2,5*.866) node{$\bullet$}; 
\draw (5*1/2+.4,5*.866-.2) node{$5m\tilde{\beta}$};
\draw[thick] (0,20/3*.866) -- (5*1/2,5*.866);
\end{tikzpicture}
\caption{Moment polytopes for type $G_2$}
\label{fig:G2pol}
\end{figure}
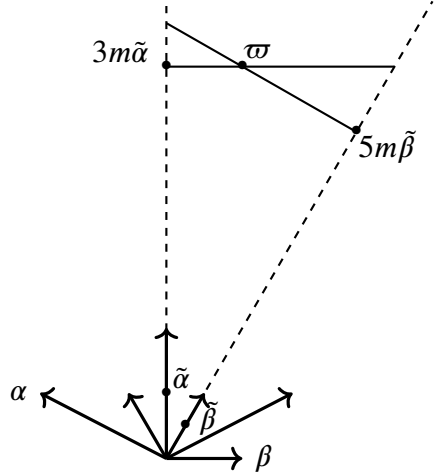
\begin{equation*}
\int_{x=0}^{3m/2}
(x-m/2)
x^m((9m/2)^2-x^2)^m((3m/2)^2-x^2)^m
dx
>0
\end{equation*}
and 
\begin{equation*}
\int_{w=0}^{5m/6}
(w-m/2)
w^m((5m/2)^2-w^2)^m((5m/2)^2-(3w)^2)^m
dx
>0.
\end{equation*}

Direct computation shows that the first condition holds 
for $m=1$ (the integral is equal to $12879/1792$) 
and $m=2$ (the integral is then equal to $192283227/308$). 
The second condition, on the other hand, is not satisfied:
the integral is equal to $-171875/435456$ if $m=1$, and 
to $-79443359375/6062364$ if $m=2$.

\section{Solution to the ODE by the continuity method}
\label{sec:solut-ODE}

\subsection{The continuity method}

To prove the existence of a solution, we consider the family of equations 
\begin{equation}
\label{eqn_u_t}
u_t''(x)P(u_t'(x))=e^{-(tu_t(x)+(1-t)u_{\mathrm{ref}}(x))}J(x)
\end{equation} 
indexed by $t\in [0,1]$. 
Here, $u_{\mathrm{ref}}$ is the smooth, even, strictly convex function 
on $\mathbb{R}$ defined by 
\begin{equation}
u_{\mathrm{ref}}(x)=\ln(e^{\lambda x}+e^{-\lambda x})+C
\end{equation}
where $C$ is the constant determined by the condition 
$\int_0^{\infty}e^{-u_{\mathrm{ref}}}J = \int_0^{\lambda}P$.

Consider the set $I\subset [0,1]$ of all $t$ such that there exists an even, $C^2$ 
solution $u_t$ to Equation~(\ref{eqn_u_t}) with $u_t'(\mathbb{R})=]-\lambda,\lambda[$.
We will show:

\begin{prop}
\label{prop:cont}
The set $I$ is equal to 
$ [0,1]\cap [0,(\lambda-n_1-2n_2)/(\lambda-\barDH)[ $
\end{prop}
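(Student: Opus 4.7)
The plan is a standard continuity method argument, with a separate obstruction argument for the upper bound. Set $t^\ast := (\lambda - n_1 - 2n_2)/(\lambda - \barDH)$. The initial step $0 \in I$ is by direct integration: at $t=0$ the ODE $u_0''P(u_0') = e^{-u_{\mathrm{ref}}}J$ separates, and imposing $u_0'(0) = 0$ yields $\int_0^{u_0'(x)} P(y)\,dy = \int_0^x e^{-u_{\mathrm{ref}}(s)} J(s)\,ds$. The normalization of the constant in $u_{\mathrm{ref}}$ is arranged precisely so that the right-hand side tends to $V$ as $x \to \infty$; inverting the increasing diffeomorphism $y \mapsto \int_0^y P$ from $[0,\lambda)$ onto $[0,V)$ then produces a smooth, even, strictly convex $u_0$ with the required range for $u_0'$. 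For openness of $I$ in $[0,1]$, I would apply the implicit function theorem in weighted Hölder spaces encoding evenness and the asymptotic behavior $u'(\pm\infty) = \pm\lambda$: the linearization at a solution $u_t$ is the Sturm-Liouville operator
\[
L_t v = (P(u_t')v')' + t\,e^{-(tu_t+(1-t)u_{\mathrm{ref}})} J \cdot v,
\]
which has positive leading term on the range, non-negative zeroth-order term, and exponentially decaying coefficients at infinity; this makes $L_t$ Fredholm of index zero and, by a maximum principle, injective, hence an isomorphism.

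The main difficulty is the $C^0$ estimate needed for closedness of $I \cap [0, t^\ast)$ in $[0, t^\ast)$. Following the Wang-Zhu method in the one-dimensional horosymmetric form developed in \cite{DelKE}, I would normalize $u_t$ by its minimum $u_t(0)$ and bound the oscillation of $u_t - \lambda|x|$ by combining a Harnack-type upper bound (using convexity and the fixed linear growth at infinity) with a lower bound extracted from the moment identity obtained by testing the equation against $u_t' - \barDH$. The change of variable $y = u_t'$ turns the left-hand side of this identity into $\int_0^\lambda (y - \barDH) P(y)\,dy = 0$, so the right-hand side $\int_0^\infty (u_t' - \barDH) e^{-(tu_t+(1-t)u_{\mathrm{ref}})} J\,dx$ must also vanish, and it is in the dependence of the latter on $u_t(0)$ that the threshold $t^\ast$ appears: the critical exponents arising from $u_t(x) \sim \lambda x$ and $J(x) \sim \mathrm{const}\cdot e^{(n_1+2n_2)x}$ give a balance between $t(\lambda - \barDH)$ and $\lambda - n_1 - 2n_2$, with a uniform bound exactly when the former is strictly smaller. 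Higher-order estimates are then routine one-dimensional bootstrap: the first-order form of the equation controls $u_t''$ pointwise from $C^0$ and $C^1$ bounds, and successive differentiations yield $C^k$ control, producing a subsequential limit which solves the equation at any cluster value of $t$.

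For the reverse inclusion $I \subset [0,1] \cap [0, t^\ast)$, I would exploit the same moment identity in combination with the asymptotic expansion $u_t(x) = \lambda x + O(1)$ that is forced on any solution with $u_t'(\mathbb{R}) = (-\lambda, \lambda)$: the leading-order analysis of $\int_0^\infty (u_t' - \barDH) e^{-(tu_t+(1-t)u_{\mathrm{ref}})} J\,dx = 0$ produces a quantity proportional to $t(\lambda - \barDH) - (\lambda - n_1 - 2n_2)$ which must be negative, hence $t < t^\ast$. The most delicate step of the whole argument is the $C^0$ estimate, where convexity, linear growth, and the weighted barycenter $\barDH$ must be tracked carefully to extract the sharp threshold; by contrast, openness, higher regularity, and the obstruction are all applications of standard tools once the right test function $u_t' - \barDH$ has been identified.
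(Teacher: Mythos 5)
Your overall strategy --- initial solution at $t=0$ by direct integration, openness via the implicit function theorem in weighted spaces, $C^0$ estimates in the Wang--Zhu spirit, higher-order bootstrap and closedness, and the obstruction from the moment identity combined with the vanishing of $\int_0^\infty \big(e^{-(tu_t+(1-t)u_{\mathrm{ref}})}J\big)'\,dx$ --- matches the paper's. But there is a genuine gap in the openness step. You assert that the linearized operator $L_t v = (P(u_t')v')' + t\,e^{-(tu_t+(1-t)u_{\mathrm{ref}})}J\,v$ is injective ``by a maximum principle''; this cannot work, because the zeroth-order coefficient $t\,e^{-(tu_t+(1-t)u_{\mathrm{ref}})}J$ is \emph{positive} for $t>0$, which is the wrong sign for a maximum principle --- a Sturm--Liouville operator with a positive zeroth-order term can perfectly well have nontrivial kernel. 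The paper instead establishes injectivity through the spectral estimate of Lemma~\ref{lem:vp-D}, a one-dimensional Lichnerowicz/Bochner bound showing that the first nonzero eigenvalue of $\Delta_t$ exceeds $t$, whose proof uses crucially the convexity of $-\ln J$ and $u_{\mathrm{ref}}$ (the analogue of $\Ric > t$). Without such an estimate, openness is not established.

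A second, smaller gap is the Fredholm index. In the weighted spaces $C^{2,\ev}_\eta$, $\eta<0$, which are the ones forced by requiring perturbations to preserve the asymptotics $u'(\pm\infty)=\pm\lambda$, the operator $L_t:C^{2,\ev}_\eta\to C^{0,\ev}_{\delta+\eta}$ has Fredholm index $-1$, not $0$, because the index drops when the weight crosses the critical value $0$. One must enlarge the source by the line $\Bbb{R}v_0$, where $v_0$ is an explicit approximate solution of $L_t v=0$ converging to a constant at infinity, to obtain the isomorphism (Lemma~\ref{lem:L-iso}). Your sketch of the $C^0$ estimate (concentration at $x_t\to\infty$, passage to the limit in the integral identity producing the threshold) is consistent in outline with the paper's argument via Donaldson's coarea formula and Lemmas~\ref{lem_reduction}--\ref{lem_limits}, but the openness step needs the two corrections above.
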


\subsection{Asymptotic expansion of the solutions}

Before proving Proposition~\ref{prop:cont}, let us prove the second half of Theorem~\ref{thm:ODE}, 
that is, the asymptotic expansion of solutions. 
Recall the notation from Theorem~\ref{thm:ODE}
\[ \delta=\frac{\lambda-n_1-2n_2}{k+1}. \]

\begin{prop}
\label{prop:cont-exp}
Let $u_t$ be a $C^2$, even solution to Equation~(\ref{eqn_u_t}) 
such that $u_t'(\mathbb{R})=]-\lambda,\lambda[$. Then $u_t$ is smooth, 
strictly convex, and admits an arbitrarily precise expansion at infinity:
for any integer $j_m$, there are constants $K_{t,j,k}$ 
such that 
\[ u_t(x)= \lambda x +K_{t,0,0}+
\sum_{\delta\leq j\delta+2k\leq j_m\delta}K_{t,j,k}e^{-(j\delta+2k)x}
+o(e^{-j_m\delta x}) \]
\end{prop}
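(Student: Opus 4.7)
The plan is to treat the three assertions—smoothness, strict convexity, and the asymptotic expansion—separately, with only the last requiring real work.

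For smoothness and strict convexity, I would rewrite Equation~(\ref{eqn_u_t}) as $u_t''(x)=G(x,u_t(x),u_t'(x))$ with $G$ globally smooth on $\mathbb{R}\times\mathbb{R}\times(-\lambda,\lambda)$: near $x=0$ the apparent zeros on both sides have the same order $x^{n_1+n_2}$, since $\sinh(x)/x$, $\sinh(2x)/(2x)$ and $u_t'(x)/x$ are smooth, even and non-vanishing there (using that $u_t$ is $C^2$ and even), so they can be divided out of $J$ and of the $u_t'^{n_1+n_2}$ factor of $P$. Standard ODE bootstrap then upgrades $C^2$ to $C^\infty$. Strict convexity for $x>0$ follows from $u_t'(x)\in(0,\lambda)$, which forces $P(u_t'(x))>0$ and hence $u_t''(x)>0$; at the origin, leading-order Taylor matching yields $(u_t''(0))^{n_1+n_2+1}=e^{-\phi_t(0)}2^{n_2}/(\lambda^k\tilde{P}(0))>0$.

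For the expansion I would pass to $v(x)=\lambda-u_t'(x)\in(0,\lambda)$ and rewrite the equation as
\[ -v'(x)\,P(\lambda-v(x)) = e^{-\phi_t(x)}\,J(x). \]
Using $P(\lambda-v)=\lambda^{n_1+n_2}\tilde{P}(\lambda)v^k(1+O(v))$ and $J(x)=2^{-n_1-n_2}e^{(n_1+2n_2)x}(1-e^{-2x})^{n_1}(1-e^{-4x})^{n_2}$, and noting that $\phi_t(x)-\lambda x$ is bounded by hypothesis, integrating from $x$ to $\infty$ gives $v(x)^{k+1}\sim C_0 e^{-(\lambda-n_1-2n_2)x}$, hence $v(x)\sim C_0^{1/(k+1)}e^{-\delta x}$ and $u_t(x)-\lambda x\to K_{t,0,0}$. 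This handles the case $j_m=0$ and already produces a remainder of size $O(e^{-\delta x})$.

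I would then iterate by induction on $j_m$: substitute the partial expansion into the ODE. Every factor admits a convergent series in the two generators $e^{-\delta x}$ and $e^{-2x}$, because $J$ expands directly, $e^{-tu_t}$ expands via the induction hypothesis, and $e^{-(1-t)u_{\mathrm{ref}}}$ expands in powers of $e^{-2\lambda x}$, which fits the lattice generated by $\delta$ and $2$ thanks to the identity $2\lambda=2(k+1)\delta+2(n_1+2n_2)$. Matching the coefficient of the next exponent on both sides determines the next $K_{t,j',k'}$ uniquely, and the residual remainder $\tilde{R}$ satisfies a linear ODE of the form $\tilde{R}'+\kappa\tilde{R}=o(e^{-(j'\delta+2k')x})$ with $\kappa>0$, so Duhamel's formula yields $\tilde{R}=o(e^{-(j'\delta+2k')x})$. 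The hardest bookkeeping point is to verify that this matching is genuinely non-degenerate at each step despite the nonlinear feedback of $u_t$ through $\phi_t$, and to check the commensurability identity above to ensure the reference potential never generates exponents outside the expected lattice; once this is in hand, the rest is classical Poincaré-style ODE asymptotic analysis.
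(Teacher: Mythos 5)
Your treatment of smoothness and strict convexity is essentially the paper's argument (parity forces $u_t'$ to vanish to order one at the origin, and the equation then bootstraps), so I have nothing to add there. For the expansion your route is genuinely different from the paper's, and one specific step is wrong as written.

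The paper works with the primitive $Q$ of $P$: integrating $u_t''P(u_t')=e^{-\phi_t}J$ from $x$ to $\infty$ (with $\phi_t:=tu_t+(1-t)u_{\mathrm{ref}}$) gives the exact integral identity $Q(\lambda)-Q(\lambda-v(x))=\int_x^\infty e^{-\phi_t}J$ with $v=\lambda-u_t'$, and then the $(k+1)$-th root $F$ of the left side together with its inverse $G$ produce the expansion of $u_t'$ by a clean algebraic substitution, with no linearization at all. You instead substitute the partial expansion into the differential equation and match coefficients, which is workable but forces you to control a remainder $\tilde R$ satisfying a linearized ODE. There your claim that $\tilde R'+\kappa\tilde R=o(e^{-\mu x})$ with $\kappa>0$ is incorrect. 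Linearizing $-v'P(\lambda-v)$ about $v\sim C_0e^{-\delta x}$, the leading part of the linear operator is proportional to $v^{k}\tilde R'+kv^{k-1}v'\tilde R$, i.e.\ (after dividing by $v^k$) $\tilde R'-k\delta\tilde R$: the sign is the opposite of what you wrote, the homogeneous solution $e^{k\delta x}$ grows, and forward Duhamel would not give decay. The correct conclusion instead uses the boundary condition $\tilde R(\infty)=0$ and the backward representation $\tilde R(x)=-\int_x^\infty e^{k\delta(x-s)}h(s)\,ds$, which does yield $\tilde R=o(e^{-\mu x})$ when $h=o(e^{-\mu x})$. Even if you had the right sign, a fixed $\kappa>0$ would not survive the induction since you need decay at rates $\mu$ eventually exceeding any fixed $\kappa$. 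Moreover, working directly with the ODE you also have to face the nonlocal dependence of the right-hand side on $\tilde R$ through $\phi_t$, which the integral identity sidesteps. So the matching idea is salvageable, but as stated it has a real gap; the paper's $F$/$G$ device avoids both the sign issue and the degeneracy of the linearized operator. One small side remark: your $j_m=0$ step already uses $\phi_t(x)-\lambda x=O(1)$, which needs a brief justification (it follows from the integral identity $Q(\lambda)-Q(\lambda-v(x))=\int_x^\infty e^{-\phi_t}J$ and the a priori bound $u_t'\leq\lambda$, which gives exponential decay of $v$ and hence integrability).
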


\begin{proof}
Assume that $u_t$ is a $C^2$, even solution to Equation~(\ref{eqn_u_t}) 
such that $u_t'(\mathbb{R})=]-\lambda,\lambda[$. The parity of $u_t$, together 
with the order of vanishing of $J$ at $0$, imply that $u_t'$ vanishes to order 
exactly one at $0$, and that $u_t''$ is positive everywhere. It shows that $u_t$ 
is strictly convex, and using the equation inductively, that $u_t$ is smooth. 

By convexity and the assumption  $u_t'(\mathbb{R})=]-\lambda,\lambda[$, we deduce 
that $u_t(x)-\lambda x$ admits a finite limit $K_{t,0,0}$ at infinity, which 
provides the two initial terms of the expansion formula, and the full expansion 
formula for $j_m=0$.

We proceed now by induction and assume that the expansion formula is proved 
for a given $j_m$. We will prove an expansion formula for 
$j_m+1$. 

Consider the function $F$ defined for $w\geq 0$ by
\[ F(w)=\left( \frac{(k+1)!(Q(\lambda)-Q(\lambda-w))}{(-1)^kP^{(k)}(\lambda)} \right)^{1/(k+1)}. \]
Note that the assumptions on $P$ imply that $(-1)^kP^{(k)}(\lambda)>0$. 
The function $F$ admits an expansion to any arbitrary order $N$ 
\[ F(w)= \sum_{1\leq n\leq N}A_nw^n +o(w^N) \] 
where $A_1=1$, $A_2=-\frac{P^{(k+1)}(\lambda)}{(k+1)(k+2)P^{(k)}(\lambda)}$, etc. 
It is invertible near $0$ and its inverse function $G$ satisfies an expansion 
\[ G(s)= \sum_{1\leq n\leq N}B_ns^n +o(s^N) \]
to any order $N$, with $B_1=1$, 
$B_2=\frac{P^{(k+1)}(\lambda)}{(k+1)(k+2)P^{(k)}(\lambda)}$, etc.

Using the definition of $F$ and the equation, we have  
\[ F(\lambda-u_0'(x)) =
\left(\frac{(k+1)!}{(-1)^kP^{(k)}(\lambda)} 
\int_x^{\infty}e^{-(tu_t+(1-t)u_{\mathrm{ref}})}J\right)^{1/(k+1)} \]

The function $u_{\mathrm{ref}}$ obviously admits an expansion as in the 
statement at any order, hence we have an expansion 
\[ tu_t(x)+(1-t)u_{\mathrm{ref}}= \lambda x +K_{0,0}+
\sum_{\delta\leq j\delta+2k\leq j_m\delta}K_{j,k}e^{-(j\delta+2k)x}
+o(e^{-j_m\delta x}), \]
for some constants $K_{j,k}$.
We may thus write an expansion formula
\begin{multline*}
\frac{(k+1)!}{(-1)^kP^{(k)}(\lambda)}e^{-(tu_t(x)+(1-t)u_{\mathrm{ref}})}J \\
= K_{0,0}'e^{(n_1+2n_2-\lambda)x} ( 1+ 
\sum_{\delta\leq j\delta+2k\leq j_m\delta}K_{j,k}'e^{-(j\delta+2k)x}
+o(e^{-j_m \delta x}) )
\end{multline*}
for some constants $K_{j,k}'$. Up to replacing the constants $K_{j,k}'$ 
by others constants $K_{j,k}''$, the expansion is still valid for the integral 
from $x$ to infinity. 
Taking the power $1/(k+1)$ we obtain the expansion 
\begin{align*} 
F(\lambda-u_0'(x)) & = K_{0,0}^{(3)}e^{-\delta x} \left( 1 + 
\sum_{\delta\leq j\delta+2k\leq j_m\delta}K_{j,k}^{(3)}e^{-(j\delta+2k)x}
+o(e^{-j_m \delta x}) \right) 
\\
& 
=  K_{0,0}^{(3)}e^{-\delta x} + 
\sum_{\delta\leq j\delta+2k\leq j_m\delta}K_{j,k}^{(4)}e^{-((j+1)\delta+2k)x}
+o(e^{-(j_m+1)\delta x}) 
\end{align*}

We finally apply $G$ to the expansion of $F(\lambda-u_0')$ to deduce the 
corresponding expansion of $u_0'$: 
\[ u_0'(x)=\lambda+ K_{0,0}^{(5)}e^{-\delta x} + 
\sum_{\delta\leq j\delta+2k\leq j_m\delta}K_{j,k}^{(5)}e^{-((j+1)\delta+2k)x}
+o(e^{-(j_m+1)\delta x}). \]
This expansion integrates to provides the expansion of $u_0$ at the 
order $j_m+1$. 
\end{proof}

\subsection{Initial solution}

We now proceed to the proof of Proposition~\ref{prop:cont}, and first verify $0\in I$. 
The equation for $t=0$ is an ordinary differential equation with separate variables. 
Let $Q$ denote a fixed primitive of $P$. 
It is strictly increasing on $[0,\lambda]$. Let $Q^{-1}$ denote its inverse 
function, that is such that $Q^{-1}(Q(y))=y$ 
for $0\leq y \leq \lambda$. 
Let $u_0:\mathbb{R}\rightarrow\mathbb{R}$ denote the even function defined 
for $x$ non-negative by 
\[ u_0(x)=\int_0^x Q^{-1}\left(Q(0)+\int_0^s e^{-u_{\mathrm{ref}}}J \right)ds. \]
It is easily checked to be a $C^2$ solution 
to Equation~(\ref{eqn_u_t}) at $t=0$, 
with $u_0'(\mathbb{R})=]-\lambda,\lambda[$, 
then is ultimately smooth and strictly convex in view of Proposition~\ref{prop:cont-exp}.  

\subsection{Upper bound on the time of existence of a solution}

\begin{prop}
Assume that there exists an even and $C^2$ solution $u_t$ 
to Equation~(\ref{eqn_u_t}) at time $t$ 
with $u_t'(\mathbb{R})=]-\lambda,\lambda[$, then  
\[ t< \frac{\lambda-n_1-2n_2}{\lambda-\barDH}. \]
\end{prop}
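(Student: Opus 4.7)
The plan is to extract two integral identities from Equation~(\ref{eqn_u_t}) and then compare them using strict pointwise inequalities for $J'/J$ and $u_{\mathrm{ref}}'$. Since $u_t$ is even with $u_t'(\mathbb{R}) = {]}{-\lambda},\lambda{[}$, the map $x \mapsto u_t'(x)$ is a strictly increasing bijection from $[0,\infty)$ onto $[0,\lambda)$, and the asymptotic expansion from Proposition~\ref{prop:cont-exp} ensures $u_t(x) = \lambda x + O(1)$. Consequently $e^{-(tu_t + (1-t)u_{\mathrm{ref}})} J(x)$ decays like $e^{-(\lambda - n_1 - 2n_2)x}$ at infinity, which justifies all integrations by parts on $[0,\infty)$.

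First I would integrate the equation over $[0,\infty)$ and perform the change of variables $y = u_t'(x)$, yielding
\[
\int_0^{\infty} e^{-(tu_t + (1-t)u_{\mathrm{ref}})} J\, dx = \int_0^{\lambda} P(y)\, dy = V.
\]
Next, multiplying the equation by $u_t'$ and making the same change of variables produces
\[
\int_0^{\infty} u_t'\, e^{-(tu_t + (1-t)u_{\mathrm{ref}})} J\, dx = \int_0^{\lambda} y\, P(y)\, dy = V\,\barDH.
\]

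The key step is to couple these identities via integration by parts. Writing $\frac{d}{dx} e^{-(tu_t + (1-t)u_{\mathrm{ref}})} = -\bigl(t u_t' + (1-t) u_{\mathrm{ref}}'\bigr) e^{-(tu_t + (1-t)u_{\mathrm{ref}})}$, multiplying by $J$ and integrating by parts (the boundary terms vanish because $J(0)=0$ and the decay noted above dominates the growth of $J$), one obtains
\[
t\int_0^{\infty} u_t'\, e^{-(\cdots)} J\, dx \;=\; \int_0^{\infty} e^{-(\cdots)} J'\, dx \;-\; (1-t)\int_0^{\infty} u_{\mathrm{ref}}'\, e^{-(\cdots)} J\, dx.
\]

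To conclude, I would exploit two strict pointwise estimates on $(0,\infty)$: since $\coth > 1$, the logarithmic derivative satisfies $J'(x)/J(x) = n_1 \coth(x) + 2 n_2 \coth(2x) > n_1 + 2 n_2$, and the explicit formula for the reference potential gives $u_{\mathrm{ref}}'(x) = \lambda \tanh(\lambda x) < \lambda$. Substituting the left-hand side by $V\,\barDH$ and using these bounds together with the first identity $\int e^{-(\cdots)} J\, dx = V$ yields the strict inequality
\[
t\, V\, \barDH \;>\; (n_1 + 2 n_2)\, V \;-\; (1-t)\, \lambda\, V,
\]
which rearranges to $\lambda - t(\lambda - \barDH) > n_1 + 2 n_2$. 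Since $\barDH < \lambda$ (because $P$ is not concentrated at $\lambda$), dividing by $\lambda - \barDH$ gives exactly $t < (\lambda - n_1 - 2n_2)/(\lambda - \barDH)$. The main subtlety is purely analytic: confirming that every boundary contribution at infinity vanishes, which is why Proposition~\ref{prop:cont-exp} needs to be invoked before this proposition is tackled.
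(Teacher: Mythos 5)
Your proof is correct and follows essentially the same route as the paper's: both hinge on the vanishing of $\int_0^\infty\bigl(e^{-(tu_t+(1-t)u_{\mathrm{ref}})}J\bigr)'\,dx$ (the paper phrases this as "the integral of the derivative vanishes" rather than as an explicit integration by parts, but these are the same), the change of variables $y=u_t'(x)$ to turn $\int u_t'\,e^{-(\cdots)}J\,dx$ into $V\barDH$, and the strict pointwise bounds $n_1\coth x+2n_2\coth 2x>n_1+2n_2$ and $u_{\mathrm{ref}}'<\lambda$. Your write-up is a touch more explicit about the boundary terms and about why $\barDH<\lambda$, but there is no substantive divergence from the paper's argument.
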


\begin{proof}
Assume there exists a solution as in the statement. Then it is in particular 
strictly convex. 
It is part of our assumptions 
that $\lambda>n_1+2n_2$, hence $e^{-(tu_t+(1-t)u_{\mathrm{ref}})}J$ 
converges to zero at infinity. 
As a consequence, the integral from zero to infinity of the derivative 
$(e^{-(tu_t+(1-t)u_{\mathrm{ref}})}J)'$ 
vanishes. 
On $]0,\infty[$, this derivative is equal to 
\[
e^{-(tu_t+(1-t)u_{\mathrm{ref}})(x)}J(x)\left(n_1\coth(x)+
2n_2\coth(2x)-(tu_t+(1-t)u_{\mathrm{ref}})'(x)\right) \]

Using Equation~(\ref{eqn_u_t}), then the change of variables $y=u_t'(x)$, 
we get 
\begin{align*}
\int_0^{\infty}u_t'(x)e^{-(tu_t+(1-t)u_{\mathrm{ref}})(x)}J(x)dx & =
\int_0^{\infty}u_t'(x)P(u_t'(x))u_t''(x)dx \\
& = \int_0^{\lambda}yP(y)dy \\
& = V \barDH
\end{align*}

On the other hand, we have $\cosh>\sinh$ and $u_{\mathrm{ref}}'<\lambda$ 
hence the vanishing of the integral of $(e^{-(tu_t+(1-t)u_{\mathrm{ref}})}J)'$ 
yields the inequality 
\[ n_1+2n_2-t\barDH-(1-t)\lambda<0. \]
We have thus obtained the desired necessary condition.
\end{proof}

\subsection{Openness} 
Just as in choosing the continuity method to solve the equation, we proceed here in analogy 
with the case of Kähler-Einstein metrics on compact manifolds. This is even more justified 
as in the case that interests us the most, we are working on a singular complex variety. 
The openness follows from the usual method in the Kähler-Einstein continuity method, except that since our manifold is singular, we must use weighted spaces instead of the standard functional spaces.
Denote by $C^{k,\ev}$ the space of even $C^k$ functions on $\Bbb{R}$. To solve the equation, we use weighted spaces
\begin{equation}
  C^{k,\ev}_\eta=\cosh(x)^\eta C^{k,\ev} .
\end{equation}
We drop the suffix $\ev$ if we consider the same space only on an interval $(A,\infty)$ with $A>0$.

We rewrite Equation (\ref{eqn_u_t}) as
\begin{equation}
  \label{eq:3}
  \ln \big( u_t''P(u'_t) \big) + t u_t = -(1-t)u_{\mathrm{ref}} + \ln J.
\end{equation}
The linearization of the LHS is
\begin{equation}
  L_tv = \Delta_tv + t v, \quad \Delta_tv = \frac{v''}{u_t''} + \frac{P'(u'_t)}{P(u'_t)} v' .
\end{equation}

\begin{lem}\label{lem:vp-D}
  If $u_t$ is a solution of \eqref{eqn_u_t}, and we denote $\omega_t=u''_t P(u'_t)$ the volume form of the corresponding metric, then one has the estimate
  \[ \int (\Delta_tv)^2 \omega_t \geq t \int \frac{(v')^2}{u''_t} \omega_t, \]
and the inequality is strict if $v'\neq0$.
\end{lem}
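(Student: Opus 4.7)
The plan is to establish the estimate by a direct integration-by-parts computation on $\mathbb{R}$, with boundary terms at $\pm\infty$ vanishing thanks to the asymptotic expansion of $u_t$ from Proposition~\ref{prop:cont-exp} and the decay of test functions in the weighted spaces $C^{k,\mathrm{ev}}_\eta$. Equation~\eqref{eqn_u_t} will enter through its first and second $x$-derivatives; the continuity parameter $t$ will drop out of the second-derivative relation and produce the desired constant in the lower bound.

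First I will use the identity $\omega_t \Delta_t v = (P(u_t') v')'$ (checked by direct computation) to expand $(\Delta_t v)^2 \omega_t$ and integrate by parts on the cross-term, obtaining
\[
\int (\Delta_t v)^2 \omega_t \, dx = \int (v'')^2 \frac{P(u_t')}{u_t''} \, dx - \int (v')^2 u_t'' P(u_t') (\ln P)''(u_t') \, dx.
\]

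The main step is to convert the second integral using the equation. Differentiating \eqref{eq:3} twice in $x$ gives the identity
\[
(u_t'')^2 (\ln P)''(u_t') = f'' - t u_t'' - \frac{u_t''''}{u_t''} + \frac{(u_t''')^2}{(u_t'')^2} - u_t''' (\ln P)'(u_t'),
\]
where $f = \ln J - (1-t) u_{\mathrm{ref}}$. Substituting this, then integrating by parts once more to eliminate the $u_t''''$ term and tracking the remaining $u_t'''$-contributions (this bookkeeping is the main technical obstacle), I expect to reach the clean identity
\[
\int (\Delta_t v)^2 \omega_t \, dx - t \int \frac{(v')^2}{u_t''} \omega_t \, dx = \int \frac{P(u_t')}{u_t''} \left( v'' - \frac{u_t''' v'}{u_t''} \right)^{\!2} dx - \int (v')^2 \frac{f''}{u_t''} P(u_t') \, dx.
\]

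Both terms on the right are non-negative: the first is the integral of a square, and
\[
-f'' = \frac{n_1}{\sinh^2 x} + \frac{4 n_2}{\sinh^2(2x)} + (1-t)\, u''_{\mathrm{ref}} > 0
\]
pointwise for $x \neq 0$ and $t \leq 1$, since $u_{\mathrm{ref}}$ is strictly convex. This proves the estimate, and the strict inequality when $v' \not\equiv 0$ then follows because the second integrand is strictly positive on a set of positive measure.
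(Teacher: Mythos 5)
Your computation is correct, and it arrives at the lemma by a genuinely different route than the paper. The paper rewrites $\Delta_tv=\frac{(P(u_t')v')'}{\omega_t}$, integrates by parts \emph{once} against $-(\ln\omega_t)''$, and then applies a pointwise Cauchy--Schwarz (AM--GM) inequality to the resulting cross-term; combined with the pointwise bound $\rho_t:=-(\ln\omega_t)''>tu_t''$ (which follows from the convexity of $-\ln J$ and $u_{\mathrm{ref}}$), this gives $\int(\Delta_tv)^2\omega_t\geq\int\frac{\rho_t(v')^2}{(u_t'')^2}\omega_t>t\int\frac{(v')^2}{u_t''}\omega_t$. You instead push through a \emph{second} integration by parts, trading the $u_t''''$ term and tracking the $u_t'''$ contributions to reach an exact Bochner-type identity in which the difference $\int(\Delta_tv)^2\omega_t-t\int\frac{(v')^2}{u_t''}\omega_t$ is displayed as the sum of a manifest square $\int\frac{P(u_t')}{u_t''}\big(v''-\frac{u_t'''v'}{u_t''}\big)^2$ and the positive term $-\int(v')^2\frac{f''}{u_t''}P(u_t')$. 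Your positivity condition $-f''>0$ is precisely the paper's $\rho_t-tu_t''>0$, so the two routes rest on the same geometric input; but your identity is strictly sharper: dropping your square term recovers exactly the paper's Cauchy--Schwarz bound $\int(\Delta_tv)^2\omega_t\geq\int\frac{\rho_t(v')^2}{(u_t'')^2}\omega_t$. The paper's version is shorter and requires less bookkeeping; yours makes the Weitzenb\"ock structure fully explicit, which can be useful if one later wants to say more about near-equality cases. Both proofs implicitly rely on vanishing boundary terms at $0$ and $\infty$, which you note but do not spell out (nor does the paper); it would be worth remarking that evenness of $v$ forces $v'(0)=0$ and that the expansion of Proposition~\ref{prop:cont-exp} together with $v\in C^{2,\ev}_\eta$ handles the decay at infinity, since your integrand $-f''\sim n_1/\sinh^2 x$ is singular at the origin.
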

\begin{proof}
  This is the usual estimate for the first nonzero eigenvalue of the Laplacian in the continuity method: since $-\ln J$ and $u_{\mathrm{ref}}$ are convex, the equation on $u_t$ implies
  \begin{equation}
 \rho_t := -(\ln \omega_t)'' > t u_t''.\label{eq:5}
\end{equation}
(This is a weaker version of  $\Ric> t$ which writes $\rho_t+(\ln J)''>t u_t''$).

To prove the estimate, we might check that the usual Weitzenböck formula applies (we are on a singular manifold), but in our case it is easy to reprove it directly: by integration by parts, writing $\Delta_tv=\frac{(P(u_t')v')'}{P(u_t')u_t''}$, one obtains
  \begin{align*}
    \int \frac{\rho_t(v')^2}{(u''_t)^2} \omega_t
    & = \int -(\ln \omega_t)'' \frac{(v' P(u_t'))^2}{\omega_t} \\
    & = \int 2 \frac{\omega'_t}{u''_t} v' \Delta_tv - \frac{(\omega'_t)^2}{(u''_t)^3P(u'_t)} (v')^2 \\
    & \leq \int (\Delta_tv)^2 \omega_t
  \end{align*}
  and the result follows from \eqref{eq:5}, as does the strict inequality.
\end{proof}

From Proposition~\ref{prop:cont-exp} we have 
$u_t'(x)=\lambda-K_2\delta e^{-\delta x}+O(e^{-(\delta+\epsilon)x})$ and 
$u_t''(x)=K_2\delta^2e^{-\delta x}+O(e^{-(\delta+\epsilon)x})$. Therefore the leading terms of $\Delta_t$ are given by
\[ \Delta_tv \sim -\frac{e^{\delta x}}{K_2\delta^2} ( v'' - k\delta v'). \]

So it is natural to study the operator $L_t=\Delta_t+t:C^{2,\ev}_\eta\rightarrow C^{0,\ev}_{\delta+\eta}$. Observe that there is an asymptotic solution converging to a constant at infinity: if near $\infty$
\begin{equation}
v_0(x) = 1 + \frac{tK_2}{\delta} e^{-\delta x}
\end{equation}
then
\begin{equation}
L_tv_0=O(e^{-\eta_0 x})\label{eq:6}
\end{equation}
for small $\eta_0$.
We extend $v_0$ as an even function on $\Bbb{R}$.

\begin{lem}\label{lem:L-iso}
  If $-\delta-\eta_0\leq\eta<0$ and $t>0$ then $L_t:\Bbb{R}v_0 \oplus C^{2,\ev}_\eta\rightarrow C^{0,\ev}_{\delta+\eta}$ is an isomorphism.
\end{lem}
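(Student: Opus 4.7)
The plan is to establish the Fredholm property of $L_t$ on these weighted spaces, show that its index is zero, and then obtain injectivity from the eigenvalue estimate of Lemma~\ref{lem:vp-D}; bijectivity then follows automatically. Before starting, I would check that $L_t$ maps $\mathbb{R}v_0 \oplus C^{2,\ev}_\eta$ continuously into $C^{0,\ev}_{\delta+\eta}$: for $v_0$ this is exactly \eqref{eq:6} combined with the hypothesis $\eta\ge -\delta-\eta_0$; for $v\in C^{2,\ev}_\eta$ the leading form $\Delta_t v \sim -\frac{e^{\delta x}}{K_2\delta^2}(v''-k\delta v')$ shifts the weight by exactly $\delta$, while the zeroth order term $tv$ lies in $C^{0,\ev}_\eta \subset C^{0,\ev}_{\delta+\eta}$ since $\delta>0$.

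The Fredholm step is where the main analytic work lies. After multiplying $L_t$ by $e^{-\delta x}$, the operator approaches at infinity the constant-coefficient model $\frac{1}{K_2\delta^2}(v''-k\delta v')$, whose indicial roots are $0$ and $k\delta$. Since $\eta\in[-\delta-\eta_0,0)$ avoids both indicial roots, standard weighted-space elliptic theory on the half-line (Lockhart--McOwen type) shows that $L_t : C^{2,\ev}_\eta \to C^{0,\ev}_{\delta+\eta}$ is Fredholm. Because $v_0$ represents precisely the indicial mode at weight $0$, adjoining the line $\mathbb{R}v_0$ to the source compensates the unit jump in the index that occurs when crossing that root from above, yielding an extended Fredholm operator of index zero.

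Injectivity is then clean. Suppose $L_t u=0$ with $u=av_0+v$ and $v\in C^{2,\ev}_\eta$. Then $u$ is smooth, even, tends to $a$ at $\infty$, and $u'$ decays exponentially. The volume form $\omega_t \sim Ce^{-(\lambda-n_1-2n_2)x}$ is integrable at infinity, so the integrals $\int u^2\omega_t$ and $\int (u')^2P(u'_t)$ are both finite; the boundary terms in integration by parts vanish at $0$ by evenness and at $+\infty$ by decay. Integrating by parts and substituting $\Delta_t u=-tu$ gives
\[ t\int u^2\omega_t = \int \frac{(u')^2}{u''_t}\omega_t. \]
Plugging $\Delta_t u = -tu$ into Lemma~\ref{lem:vp-D} yields $t^2\int u^2\omega_t \geq t\int (u')^2/u''_t\,\omega_t = t^2\int u^2\omega_t$ with equality, and the strictness clause of the lemma forces $u'=0$. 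Then $L_t u= tu = 0$ with $t>0$ gives $u=0$.

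An injective Fredholm operator of index zero is an isomorphism, completing the proof. The main obstacle is the Fredholm/index step: the energy estimate from Lemma~\ref{lem:vp-D} hands us injectivity essentially for free, so the real work is organizing the asymptotic reduction to the constant-coefficient model cleanly enough to invoke a Lockhart--McOwen type theorem, and to track the unit index jump at the indicial weight $0$ which the insertion of the line $\mathbb{R}v_0$ is exactly designed to absorb.
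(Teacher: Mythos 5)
Your overall architecture (Fredholm + index zero + injectivity $\Rightarrow$ isomorphism) matches the paper's, and your injectivity step is actually cleaner than the paper's: you prove directly, via integration by parts and the strictness clause of Lemma~\ref{lem:vp-D}, that the kernel of the extended operator vanishes, whereas the paper argues through the $L^2$ kernel and then invokes ``no kernel appears between critical weights.'' Your boundary-term checks at $0$ (using evenness and $P(u_t'(0))=0$) and at $\infty$ (using $u'=O(e^{-\delta x})$ and decay of $\omega_t$) are correct.

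The genuine gap is in the index computation. You assert that adjoining $\Bbb{R}v_0$ ``compensates the unit jump in the index that occurs when crossing that root from above, yielding an extended Fredholm operator of index zero.'' This implicitly assumes that for $\eta$ slightly greater than $0$ the index of $L_t:C^{2,\ev}_\eta\to C^{0,\ev}_{\delta+\eta}$ is already $0$, and that is not justified by the Lockhart--McOwen framework alone: weighted elliptic theory gives you Fredholmness and the \emph{jumps} of the index across indicial weights, but not its \emph{value} at any particular weight. The paper supplies the missing anchor by exploiting that $L_t$ is formally self-adjoint with respect to $\omega_t\sim \mathrm{cst.}\,e^{-(k+1)\delta x}$: by duality the cokernel at weight $\eta$ is the kernel at weight $k\delta-\eta$, and since the kernel is trivial for all weights below $k\delta$, the operator is surjective for $0<\eta<k\delta$. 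Combined with injectivity this pins the index at $0$ on that range; crossing the simple indicial root $0$ then gives index $-1$ for $\eta<0$, and the line $\Bbb{R}v_0$ restores index $0$. Without this self-adjointness/duality argument (or an equivalent computation of the index somewhere), the claim of index zero is unsupported, and your injectivity result alone does not yield surjectivity.

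A secondary remark: you should also note, as the paper does, that the $\delta+\eta$ weight on the target is where the bound $\eta\ge -\delta-\eta_0$ enters (so that $L_tv_0\in C^{0,\ev}_{\delta+\eta}$ by \eqref{eq:6}); you state this at the start but it is worth flagging explicitly as the only place that constraint is used, since for smaller $\eta$ one would need a better asymptotic solution in place of $v_0$.
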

\begin{proof}
  Weighted analysis (see for example \cite{LocMcO}) says immediately that \[L_t:C^{2,\ev}_\eta\rightarrow C^{0,\ev}_{\delta+\eta}\] is Fredholm as soon as $\eta\neq0, k\delta$, which are the critical weights giving the possible orders of growth of elements of the kernel of $L_t$. Moreover $L_t$ is selfadjoint with respect to the volume form $\omega_t\sim \mathrm{cst.} e^{-(k+1)\delta x}$.

  The $L^2$ space corresponds to the weight $\frac12 (k+1)\delta$, and the same weighted analysis implies that $\Delta_t$ has discrete spectrum; from lemma \ref{lem:vp-D}, the first nonzero eigenvalue of $\Delta_t$ is greater than $t$, and therefore $\ker_{L^2}L_t=0$. This implies that the kernel of $L_t$ in $C^{2,\ev}_\eta$ vanishes for $\eta<\frac12 (k+1)\delta$ and therefore for $\eta<k\delta$ since no kernel can appear between critical weights.
  
  From selfadjointness, the cokernel of $L_t$ for the weight $\eta$ identifies to the kernel of $L_t$ for the weight $-\eta+k\delta$, so we get surjectivity provided that $\eta>0$. When the weight $\eta$ crosses the critical weight $0$, the index changes by $1$, so we get for $\eta<0$ an index equal to $-1$. If we add the factor $\Bbb{R}v_0$ at the source, we therefore obtain a Fredholm operator of index 0; it is an isomorphism since $L_t$ is injective for weights smaller then $k\delta$. The restriction $\eta\geq-\delta-\eta_0$ comes from \eqref{eq:6}, one may obtain the isomorphism for smaller $\eta$ provided that $v_0$ is replaced by an asymptotic solution to order $\delta+\eta$.
\end{proof}

\begin{proof}[Proof of openness]
  For $t>0$ the operator $L_t$ is an isomorphism between the spaces specified in Lemma \ref{lem:L-iso}, which is exactly what we need to apply the implicit function theorem to equation \eqref{eq:3}. For $t=0$, as is well-known, one recovers the same result by applying the implicit function theorem to the operator $\ln \big( u_t''P_1(u'_t) \big) + t u_t + \int u_t\omega_0$.
\end{proof}

\subsection{$C^0$ estimates}

We turn now to \emph{a priori} estimates on the solutions to Equation~(\ref{eqn_u_t}).
We begin with $C^0$ estimates with respect to the function $u_0$, 
which are the estimates where the condition appears. 
Our goal in this section is thus to prove the existence, 
on any closed interval $[t_0,t_1]\subset [0,1]\cap ]0,(\lambda-n_1-2n_2)/(\lambda-\barDH)[$ 
of a constant $C$ such that 
$|u_t-u_0|\leq C$ for any smooth, even, strictly convex solution $u_t$ of 
Equation~(\ref{eqn_u_t}) with $|u_t(x)-\lambda|x||=O(1)$, at time $t\in [t_0,t_1]$. 

In the following, $u_t$ denotes a smooth, even, strictly convex solution of 
Equation~(\ref{eqn_u_t}) at time $t$ with $u_t(x)-\lambda|x|=O(1)$. 
Set $j=-\ln J$ on $]0,\infty[$ and 
$ \nu_t:=tu_t+(1-t)u_{\mathrm{ref}}+j $.
Note that, on $]0,\infty[$, $e^{-\nu_t}$ is the right-hand side of Equation~(\ref{eqn_u_t}). 
In particular, its integral is fixed:
\begin{equation}
\int_0^{\infty} e^{-\nu_t} dx= V.
\end{equation}

The function $\nu_t$ is smooth and strictly 
convex and satisfies 
$\lim_{x\rightarrow 0}\nu_t(x)=
\lim_{x\rightarrow +\infty}\nu_t(x)=+\infty$.
As a consequence, $\nu_t$ admits a unique minimum and we introduce 
the notations $m_t$ and $x_t$ defined by  
$m_t=\min_{]0,\infty[}\nu_t=\nu_t(x_t)$.

\subsubsection{Reducing to estimates on $m_t$, $x_t$, and linear growth}

\begin{lem}
\label{lem_reduction}
Assume there exists positive constants $t_0$, $C_m$, $C_x$, $\ell_1$ and $\ell_0$ 
such that $t\geq t_0$, $|m_t|<C_m$, $\nu_t(x)\geq \ell_1|x-x_t|-\ell_0$, and $x_t<C_x$.
Then $\sup |u_t-u_0|\geq C$ 
for some constant $C$ independent of $t\geq t_0$.
\end{lem}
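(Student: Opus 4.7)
Reading the conclusion as the uniform $C^0$-bound $\sup|u_t-u_0|\leq C$ matching this section's stated goal, the plan is to convert the three hypotheses on the convex function $\nu_t=tu_t+(1-t)u_{\mathrm{ref}}+j$ into two-sided pointwise control on $\nu_t$, then recover bounds on $u_t$ from the algebraic identity
\[
u_t=\frac{1}{t}\bigl(\nu_t-(1-t)u_{\mathrm{ref}}-j\bigr),
\]
which is harmless since $t\geq t_0>0$.

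The lower bound on $\nu_t$ is essentially given: $\nu_t\geq m_t>-C_m$ by convexity, and the linear-growth hypothesis combined with $x_t<C_x$ gives $\nu_t(x)\geq\ell_1|x|-(\ell_0+\ell_1 C_x)$ uniformly in $t$. The main obstacle is the upper bound on $\nu_t$. For this I would first note that $u_t'(\mathbb{R})=(-\lambda,\lambda)$ and $|u_{\mathrm{ref}}'|\leq\lambda$ give $|\nu_t'(x)|\leq\lambda+|j'(x)|$, uniformly bounded on any compact subinterval of $(0,\infty)$. Moreover $\nu_t'(x_t)=0$ together with $|j'(x)|\to\infty$ as $x\to 0^+$ forces $x_t\geq\varepsilon_0$ for some $\varepsilon_0>0$ depending only on $\lambda,n_1,n_2$; combined with $\nu_t(x_t)=m_t\in(-C_m,C_m)$, this yields a uniform upper bound on $\nu_t$ on any compact interval containing $[\varepsilon_0,C_x]$. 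For large $x$, integrating $\nu_t'\leq\lambda+j'$ from $x_t$ gives the affine upper bound $\nu_t(x)\leq(\lambda-n_1-2n_2)x+O(1)$, with the $O(1)$ constant controlled by the compact bound and by the mass normalization $\int_0^\infty e^{-\nu_t}\,dx=V$.

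With two-sided bounds of the form $|\nu_t(x)-(\lambda-n_1-2n_2)x|\leq C'$ uniform in $t\in[t_0,1]$, the identity above expresses $u_t(x)$ as $\lambda|x|+O(1)$ with the $O(1)$ uniform in $t$. Since by Proposition~\ref{prop:cont-exp} the function $u_0$ has the same leading asymptotics, subtraction gives $\sup|u_t-u_0|\leq C$ with $C$ depending only on $t_0,C_m,C_x,\ell_1,\ell_0$. The printed inequality $\sup|u_t-u_0|\geq C$ should, in this light, be read with the opposite sign; nothing in the hypotheses would force a positive lower bound on $\sup|u_t-u_0|$, since they are compatible with $u_t$ agreeing with $u_0$.
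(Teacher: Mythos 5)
You correctly read the conclusion as $\sup|u_t-u_0|\leq C$; the $\geq$ in the printed lemma is a typo (and the paper's own proof establishes $\leq$). You also correctly pin $x_t$ away from the origin using the blow-up of $j'$, and the affine upper bound on $\nu_t$ obtained by integrating $\nu_t'<\lambda+j'$ from $x_t$ is sound and uniform in $t$.

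The gap is in the asserted \emph{lower} bound. Your hypotheses give $\nu_t(x)\geq\ell_1|x-x_t|-\ell_0$, i.e.\ a linear lower bound of slope $\ell_1$, and nothing in the lemma's assumptions forces $\ell_1$ to equal $\lambda-n_1-2n_2$. Yet the ``two-sided bound of the form $|\nu_t(x)-(\lambda-n_1-2n_2)x|\leq C'$'' that you go on to use requires precisely the sharper slope $\lambda-n_1-2n_2$ on the lower side. That lower bound is equivalent to a uniform bound on $\int_{x_t}^{\infty}\bigl[(\lambda-n_1-2n_2)-\nu_t'(s)\bigr]\,ds$, whose dominant contribution is $t\int_{x_t}^{\infty}(\lambda-u_t'(s))\,ds$; but controlling that integral uniformly in $t$ is essentially the same as uniformly bounding $u_t(x_t)+\lim_{x\to\infty}(\lambda x-u_t(x))$, i.e.\ the very $C^0$ estimate on $u_t$ that the lemma is trying to prove. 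So the plan, as written, is circular. The volume normalization $\int_0^{\infty}e^{-\nu_t}\,dx=V$ cannot repair this: it constrains $\nu_t$ near its minimum (this is how one controls $m_t$), not the linear slope of $\nu_t$ far from $x_t$.

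The paper's argument avoids this obstruction by passing to Legendre transforms $v_t,v_0$ on $[0,\lambda]$, where $\sup|u_t-u_0|=\sup|v_t-v_0|$. It bounds $|v_t-\hat v_t|_{C^{0,1/2}}$ via Morrey and Poincar\'e--Wirtinger in terms of $\|v_t'\|_{L^2([0,\lambda])}$, then applies H\"older with weight $P$ and uses the change of variables $x=v_t'$ together with the equation itself to reduce to $\int_0^{\infty}|x|^{2p}e^{-\nu_t(x)}\,dx$. This integral is finite and uniformly bounded using only the \emph{weak} linear growth $\nu_t\geq\ell_1|x-x_t|-\ell_0$, whatever $\ell_1>0$ is; no sharp-slope lower bound on $\nu_t$ is needed. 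The point value is then controlled through $v_t(0)=-u_t(0)$, using $|m_t|<C_m$, $s_1\leq x_t<C_x$, and $t\geq t_0>0$. If you wish to keep a direct pointwise approach you would need to first supply, independently, a uniform a priori bound on $\lim_{x\to\infty}(\lambda x-u_t(x))$, and the hypotheses of the lemma do not obviously yield one.
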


\begin{proof}
Denote by $v_t$ resp. $v_0$ the Legendre transforms 
of $u_t$ and $u_0$. They are even and bounded strictly 
convex functions defined on $[-\lambda,\lambda]$, smooth on $]-\lambda,\lambda[$
and continuous on $[-\lambda,\lambda]$. 
It is standard that $\sup_{\mathbb{R}}|u_t-u_0| = \sup_{[0,\lambda]} |v_t-v_0|$.
To prove the statement, it is thus enough 
to bound $|v_t|$ on $[0,\lambda]$.

Let $\hat{v}_t:=\int_0^{\lambda}v_tdp/(\lambda)$ denote the mean value of $v_t$. 
By Morrey's inequality, then by the Poincaré-Wirtinger inequality, we have (for some 
constant $C$ independent of $t$ which may change from line to line)
\begin{align*}
|v_t-\hat{v}_t|_{C^{0,1/2}} &\leq C \left(|v_t-\hat{v}_t|_{L^{2}}+|v_t'|_{L^{2}}\right) \\
& \leq C |v_k'|_{L^{2}}
\end{align*}
Choose $p$, $q>1$ such that $1/p+1/q=1$ and $P^{-q/p}$ is integrable on $[0,\lambda]$. 
Then by Holder's inequality, we can write 
\begin{align*}
\int_0^{\lambda} |v_t'|^2 & = \int_0^{\lambda} (|v_t'|^2P^{1/p}) (P^{-1/p}) \\
&\leq \left(\int_0^{\lambda} |v_t'|^{2p}P\right)^{1/p}\left(\int_0^{\lambda} P^{-q/p}\right)^{1/q} \\
& \leq C \left(\int_0^{\lambda} |v_t'|^{2p}P\right)^{1/p}
\end{align*}
By the change of variables $x=v_k'$, we have 
\begin{align*}
\int_0^{\lambda} |v_t'|^{2p}P & = \int_0^{\infty} |x|^{2p} P(u_t'(x))u_t''(x) dx \\
& = \int_0^{\infty} |x|^{2p} e^{-\nu_t(x)}dx \\
\intertext{by Equation~(\ref{eqn_u_t}). By the linear growth estimate, this is}
& \leq \int_0^{\infty} |x|^{2p} e^{-\ell_1|x|+\ell_0+\ell_1C_x} dx = C
\end{align*}
We thus have $|v_t-\hat{v}_t|_{C^{0,1/2}} \leq C$.
As a consequence, 
\begin{equation*}
\sup_{y_1,y_2\in [0,\lambda]} |v_t(y_1)-v_t(y_2)| \leq C. 
\end{equation*}
Hence to conclude it suffices to bound $v_t$ at some point.
By definition of Legendre transform, $u_t(0)=-v_t(u_t'(0))=-v_t(0)$. 
Since $|u_t'|\leq \lambda$, we have 
\begin{equation*}
|u_t(0)|\leq |u_t(x_t)|+\lambda C_x.
\end{equation*}
Note that there exists a constant $s_1>0$, independent of $t$, such that 
$x_t\geq s_1$.
Indeed, the minimum $x_t$ is the point where $\nu_t'=0$.
Since $j$ tends to infinity near $0$, its derivative is unbounded, 
whereas $u_t'$ and $u_{\mathrm{ref}}'$ are $\leq \lambda$. 

By definition of $m_t$ and $x_t$, we can conclude: 
\begin{align*}
|u_t(x_t)|&=\frac{1}{t}\left| m_t-(1-t)u_{\mathrm{ref}}(x_t)-j(x_t)\right| \\
& \leq \frac{1}{t_0}\left( C_m+\sup_{[s_1,C_x]}u_{\mathrm{ref}}+\sup_{[s_1,C_x]}j \right) \\
& \leq C
\end{align*} 
\end{proof}

\subsubsection{Estimates on $|m_t|$ and linear growth}

Define $0<\delta=\delta(t)<y=y(t)$ by $[y-\delta,y+\delta]=\nu_t^{-1}([m_t,m_t+1])$.
Note that there exists an $s_2>0$ independent of $t$ such that $y-\delta\geq s_2$.
Indeed, for $0<x<1$, consider the expression 
\[ \nu_t(x) = \nu_t(1) + \int_1^x\nu_t'(z)dz \geq m_t+\int_1^x j'(z)dz \]
Since $j'$ is negative and $\int_1^0 j'(z)dz=\infty$,   
we may find $s>0$ such that $\int_1^x j'(z)dz\geq 1$ for all 
$0<x<s$, hence $\nu_t(x)\geq m_t+1$ for $x<s$. 

On $[s_2,\infty[$, the derivatives of $\nu_t$ admit a uniform bound independent
of $t$, so we may also find a $\delta_0>0$ independent of $t$ with 
$[x_t-\delta_0,x_t+\delta_0]\subset [y-\delta,y+\delta]$.

We will use estimates on $\delta$ to derive estimates on $|m_t|$ and linear growth.

\begin{lem}
\label{lem_estimate_delta}
Assume $t\geq t_0>0$ then $\delta \leq \sqrt{t_0^{-1}e^{m_t+1}\sup_{[0,\lambda]}P}$.
\end{lem}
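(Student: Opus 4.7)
The key step is a convexity estimate for $\nu_t$ on $[y-\delta,y+\delta]$. By definition $\nu_t\leq m_t+1$ there, so rewriting the equation as $u_t''(x)P(u_t'(x))=e^{-\nu_t(x)}$ and using $P(u_t'(x))\leq \sup_{[0,\lambda]}P$ (valid because $u_t'\in[0,\lambda]$ throughout $[s_2,\infty)\supset [y-\delta,y+\delta]$, by evenness and convexity of $u_t$) yields the pointwise lower bound
\[ u_t''(x)\geq \frac{e^{-(m_t+1)}}{\sup_{[0,\lambda]}P} \quad \text{on } [y-\delta,y+\delta]. \]

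Next I would exploit that $u_{\mathrm{ref}}$ and $j=-n_1\ln\sinh(x)-n_2\ln\sinh(2x)$ are both convex on $]0,\infty[$, so $\nu_t''\geq tu_t''\geq c:=t_0 e^{-(m_t+1)}/\sup_{[0,\lambda]}P$ on $[y-\delta,y+\delta]$. Since $\nu_t'(x_t)=0$ and $\nu_t(x_t)=m_t$, integrating this inequality twice from $x_t$ produces the quadratic lower bound $\nu_t(x)\geq m_t+\frac{c}{2}(x-x_t)^2$ on this interval. Applying it at the endpoints $y\pm\delta$, where $\nu_t=m_t+1$, bounds each of $x_t-(y-\delta)$ and $(y+\delta)-x_t$ by $\sqrt{2/c}$, and summing these gives $2\delta\leq 2\sqrt{2/c}$, i.e., the announced estimate (possibly up to an inessential universal constant).

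There is essentially no obstacle beyond bookkeeping: the only verification needed is that $[y-\delta,y+\delta]\subset[s_2,\infty)$, which was established in the discussion preceding the lemma and ensures simultaneously that $u_t'\geq 0$ there and that $j$ is well-defined and convex. Once the lower bound on $\nu_t''$ is in hand the conclusion is a routine convexity argument.
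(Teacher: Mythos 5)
Your proof is correct and follows essentially the same strategy as the paper: the key estimate $\nu_t'' \geq t_0 u_t'' \geq t_0 e^{-(m_t+1)}/\sup_{[0,\lambda]}P$ on $[y-\delta,y+\delta]$ is exactly the one the authors use. The only difference is in the final bookkeeping: the paper compares $\nu_t$ against a parabola centered at the midpoint $y$ via an auxiliary convex function $f(x)=\nu_t(x)-t_0e^{-m_t-1}(\sup P)^{-1}((x-y)^2-\delta^2)-m_t-1$ vanishing at both endpoints, whereas you integrate $\nu_t''\geq c$ twice from the minimizer $x_t$. Your version loses a factor $\sqrt{2}$ in the constant (as you note), but this is harmless for the application; in fact, if you differentiate the paper's auxiliary function you will find a factor-of-two slip in their displayed computation of $f''$ (the quadratic term contributes $2c$, not $c$), so the paper's exact constant is subject to the same inessential correction.
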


\begin{proof}
Consider the function $f$ defined by 
\begin{equation*}
f(x)=\nu_t(x)-t_0e^{-m_t-1}(\sup_{[0,\lambda]}P)^{-1}\left((x-y)^2-\delta^2\right)-m_t-1.
\end{equation*}
We claim that $f$ is convex on $[y-\delta,y+\delta]$. Indeed:
\begin{align*}
f''(x)&=tu_t''(x)+(1-t)u_{\mathrm{ref}}''(x)+j''(x)-t_0e^{-m_t-1}(\sup_{[0,\lambda]}P)^{-1} \\
&\geq t_0u_t''(x)-t_0e^{-m_t-1}(\sup_{[0,\lambda]}P)^{-1} \\
&\geq t_0e^{-\nu_t(x)}(P(u_t'(x)))^{-1}-t_0e^{-m_t-1}(\sup_{[0,\lambda]}P)^{-1} \\
&\geq 0
\end{align*}
where the last inequality holds by definition of $y$ and $\delta$.
By construction, $f(y-\delta)=f(y+\delta)=0$, hence by convexity, $f(y)\leq 0$.
This translates as the second inequality in 
\begin{equation*}
m_t\leq\nu_t(y)\leq -t_0e^{-m_t-1}(\sup_{[0,\lambda]}P)^{-1}\delta^2+m_t+1
\end{equation*}
and concludes the proof.
\end{proof}

\begin{prop}
\label{prop_m_t}
There exists positive constants $C_m$, $\ell_1$, $\ell_0$ such 
that $|m_t|\leq C_m$ and $\nu_t(x)\geq \ell_1|x-x_k|-\ell_0$. 
\end{prop}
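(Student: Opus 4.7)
The plan is to combine the fixed-mass identity $\int_0^\infty e^{-\nu_t}dx=V$ with two affine comparisons for the convex function $\nu_t$: a convex \emph{lower} bound (fed through Lemma~\ref{lem_estimate_delta}) to control $m_t$ from above, and a pointwise \emph{upper} bound on $\nu_t'$ to control $m_t$ from below. The linear growth will then come out for free.

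\emph{Upper bound on $m_t$.} Since $\nu_t$ is convex with $\nu_t(x_t)=m_t$, $\nu_t(y\pm\delta)=m_t+1$ and $x_t\in(y-\delta,y+\delta)$, one has $|\nu_t'(y\pm\delta)|\geq 1/(2\delta)$. Monotonicity of $\nu_t'$ then integrates to
\[ \nu_t(x)\geq m_t+1+\frac{|x-y|-\delta}{2\delta}\qquad\text{for }|x-y|\geq\delta. \]
Splitting $V=\int_0^\infty e^{-\nu_t}dx$ into the central interval $[y-\delta,y+\delta]$ (where $\nu_t\geq m_t$) and the two tails, and evaluating the resulting geometric integrals, yields $V\leq C_1\delta\,e^{-m_t}$ for an explicit $C_1$. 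Combined with Lemma~\ref{lem_estimate_delta}, i.e.\ $\delta^2\leq C_0e^{m_t}$, this eliminates $e^{m_t}$ and produces a uniform bound $\delta\leq\delta_{\max}$, whence $m_t\leq\ln(C_1\delta_{\max}/V)$.

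\emph{Lower bound on $m_t$, and linear growth.} For any $x>0$ one has $u_t'(x)\leq\lambda$ by assumption, $u_{\mathrm{ref}}'(x)=\lambda\tanh(\lambda x)\leq\lambda$, and $j'(x)\leq 0$, so $\nu_t'\leq\lambda$ pointwise. Integrating this from $y+\delta$, where $\nu_t=m_t+1$, gives the affine upper bound $\nu_t(x)\leq m_t+1+\lambda(x-y-\delta)$ on $[y+\delta,+\infty)$, whence
\[ V\geq \int_{y+\delta}^\infty e^{-m_t-1-\lambda(x-y-\delta)}dx=\frac{e^{-m_t-1}}{\lambda}, \]
which yields $m_t\geq-\ln(V\lambda)-1$ and completes the two-sided bound $|m_t|\leq C_m$. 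For the linear growth, the inequality from the previous paragraph together with $|x-x_t|\leq|x-y|+\delta$ and $\delta\leq\delta_{\max}$ extends to $\nu_t(x)\geq m_t+|x-x_t|/(2\delta_{\max})-1$ on all of $(0,+\infty)$ (the trivial bound $\nu_t\geq m_t$ on the central interval being absorbed in the $-1$), and plugging in $m_t\geq-C_m$ gives the claim with $\ell_1=1/(2\delta_{\max})$ and $\ell_0=C_m+1$.

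The main non-routine point is recognizing that the two sides of $|m_t|\leq C_m$ require genuinely different arguments: the upper bound is a joint consequence of convexity and Lemma~\ref{lem_estimate_delta}, while the lower bound relies instead on the pointwise affine upper bound for $\nu_t$, coming from $u_t',u_{\mathrm{ref}}'\leq\lambda$ and $j'\leq 0$. Once these are separated, neither step presents a serious technical obstacle.
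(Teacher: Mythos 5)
Your proof is correct, and it achieves the two-sided bound on $m_t$ by a route that is genuinely different from the paper's, particularly for the lower bound. The paper expresses $V$ via Donaldson's coarea formula and then bounds the level-set volumes $\Vol(\{\nu_t\leq m_t+s\})$ from both sides: from above by a dilation argument (convexity plus Lemma~\ref{lem_estimate_delta}), and from below by exhibiting a uniform $\delta_0>0$ with $[x_t-\delta_0,x_t+\delta_0]\subset\{\nu_t\leq m_t+1\}$; that $\delta_0$ in turn requires first establishing a lower bound $y-\delta\geq s_2>0$ and then a uniform bound on $|\nu_t'|$ on $[s_2,\infty)$ (which uses that $j'$ is bounded away from the singularity at $0$). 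Your argument dispenses with the coarea formula entirely and with all of the $s_2$/$\delta_0$ machinery: for the upper bound on $m_t$ you integrate the convexity tail estimate directly (equivalent in spirit to the paper's dilation bound), and for the lower bound on $m_t$ you replace the local uniform-derivative argument by the immediate global inequality $\nu_t'\leq\lambda$, coming from $u_t'<\lambda$, $u_{\mathrm{ref}}'<\lambda$, and $j'\leq 0$. This is shorter and uses less of the structure of $\nu_t$; the trade-off is that the pointwise bound $\nu_t'\leq\lambda$ is special to the one-variable setting, whereas the paper's coarea approach is the one that generalizes to higher-dimensional moment polytopes (it is the Wang--Zhu scheme the authors refer to), which is presumably why they phrased it that way. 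One small remark: the statement of the proposition contains a typo ($x_k$ should read $x_t$); you correctly read it as $x_t$, and your $\ell_1=1/(2\delta_{\max})$, $\ell_0=C_m+1$ agree with the paper's $1/(2\delta_m)$, $C_m+1$.
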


\begin{proof}
We use Donaldson's coarea formula \cite{Don08} to express $V$: 
\begin{equation*}
V= \int_0^{\infty} e^{-\nu_t(x)}dx = e^{-m_t} \int_0^{\infty}e^{-s}\mathrm{Vol}(\{\nu_t\leq m_t+s\}) ds
\end{equation*}
We first obtain both upper and lower bounds on $\mathrm{Vol}(\{\nu_t\leq m_t+s\})$.
On one hand, for $s\geq 1$, the set $\{\nu_t\leq m_t+s\}$ contains 
$\{\nu_t\leq m_t+1\}=[y-\delta,y+\delta]$ 
hence also $[x_t-\delta_0,x_t+\delta_0]$, so that, for $s\geq 1$,
\begin{equation*}
\mathrm{Vol}(\{\nu_t\leq m_t+s\})\geq 2\delta_0.
\end{equation*}
On the other hand, by convexity, the set $\{\nu_t\leq m_t+s\}$ is included in 
the $s$-dilation of $[y-\delta,y+\delta]$ with center $x_t$. 
As a consequence, 
\begin{equation*}
\mathrm{Vol}(\{\nu_t\leq m_t+s\})\leq 2s\delta \leq 2s\sqrt{t_0^{-1}e^{m_t+1}\sup_{[0,\lambda]}P}
\end{equation*} 
where the last inequality follows from Lemma~\ref{lem_estimate_delta}.

From this we deduce upper and lower bounds on $V$: on one hand, 
\begin{align*}
V& \geq e^{-m_t} \int_1^{\infty}e^{-s}\mathrm{Vol}(\{\nu_t\leq m_t+s\}) ds\\
&\geq 2\delta_0 e^{-m_t} \int_1^{\infty}e^{-s}ds = 2\delta_0e^{-m_t-1}
\end{align*}
and on the other hand 
\begin{equation*}
V\leq 2e^{-m_t}\sqrt{t_0^{-1}e^{m_t+1}\sup_{[0,\lambda]}P}\int_0^{\infty}se^{-s}ds
=2\sqrt{t_0^{-1}e\sup_{[0,\lambda]}P}e^{-m_t/2}.
\end{equation*}

We easily translate this into a bound $|m_t|\leq C_m$. 

Going back to Lemma~\ref{lem_estimate_delta}, we now have a constant 
$\delta_m$ independent of $t$ such that $\delta\leq \delta_m$.
As a consequence, we have $\nu_t(x_t\pm 2\delta_m)\geq m_t+1$ and, by convexity,
$\nu_t(x)\geq |x-x_t|/(2\delta_m)+m_t$ 
outside of the interval $[x_t-2\delta_m,x_t+2\delta_m]$. 
The conclusion thus follows: 
\begin{equation*}
\nu_t(x)\geq |x-x_t|/2\delta_m+m_t-1 \geq |x-x_t|/2\delta_m-C_m-1
\end{equation*}
everywhere.
\end{proof}

\subsubsection{End of proof of $C^0$ estimates}

We conclude the proof by contradiction. By openness at $0$ it means that the 
$C^0$ estimates fail on some interval 
$[t_0,t']\subset [0,1]\cap [0,(\lambda-n_1-2n_2)/(\lambda-\barDH)[$
 with $t_0>0$.  
Then we may find a sequence $(t_k)_{k\in\mathbb{N}^*}$ of elements of $[t_0,t']$ 
such that $t_k\rightarrow t_{\infty}$ and 
\begin{equation*}
\lim_{k\rightarrow \infty} \sup_{\mathbb{R}}|u_{t_k}-u_0| =\infty.
\end{equation*} 
By Lemma~\ref{lem_reduction} and Proposition~\ref{prop_m_t}, 
we then have $\lim_{k\rightarrow \infty} x_{t_k}=\infty$ 
up to passing to a subsequence. 

In view of the properties of $\nu_t$, it is immediate that   
\begin{equation*}
\int_0^{\infty} \nu_{t_k}'e^{-\nu_{t_k}}dx =0
\end{equation*}
This vanishing integral may be rewritten as  
\begin{equation}
\label{eq:vanishing}
t_k(\int_0^{\infty}u_{t_k}'e^{-\nu_{t_k}}+\int_0^{\infty}j'e^{-\nu_{t_k}})=
(t_k-1)(\int_0^{\infty}u_{\mathrm{ref}}'e^{-\nu_{t_k}}+\int_0^{\infty}j'e^{-\nu_{t_k}}).
\end{equation}

\begin{lem}
\label{lem_limits}
The limit of equality \eqref{eq:vanishing} as $k\rightarrow \infty$ gives 
\begin{equation*}
t_{\infty}(\barDH-(m_{\alpha_1}+2m_{2\alpha_1}))=(t_{\infty}-1)(\lambda-(m_{\alpha_1}+2m_{2\alpha_1})).
\end{equation*}
\end{lem}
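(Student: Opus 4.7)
The plan is to identify the limit of each of the four integrals appearing in (\ref{eq:vanishing}) separately; three of them require a concentration argument while the first is computed exactly.

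First I would handle $\int_0^\infty u_{t_k}' e^{-\nu_{t_k}} dx$. Since $e^{-\nu_{t_k}} = u_{t_k}'' P(u_{t_k}')$ is exactly the right-hand side of Equation~(\ref{eqn_u_t}), the substitution $y = u_{t_k}'(x)$ is immediate: $u_{t_k}'(0)=0$ by evenness, $u_{t_k}'(\infty)=\lambda$ by convexity and our asymptotic assumption, so
\[
\int_0^\infty u_{t_k}'(x) e^{-\nu_{t_k}(x)} dx \;=\; \int_0^\lambda y P(y)\,dy \;=\; V \barDH,
\]
a constant independent of $k$. Thus this term contributes $V\barDH$ to the limit. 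Note that the same substitution gives $\int_0^\infty e^{-\nu_{t_k}} dx = \int_0^\lambda P(y)\,dy = V$, which I will use below.

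For the remaining two integrals, the strategy is a dominated-convergence/concentration argument using the information gathered in Proposition~\ref{prop_m_t}: the estimate $\nu_{t_k}(x) \geq \ell_1|x - x_{t_k}| - \ell_0$ with uniform constants, combined with $x_{t_k}\to\infty$, shows that the probability measure $V^{-1} e^{-\nu_{t_k}} dx$ concentrates at $+\infty$. Since $u_{\mathrm{ref}}'(x) = \lambda\tanh(\lambda x) \to \lambda$ as $x\to\infty$ and $u_{\mathrm{ref}}'$ is uniformly bounded by $\lambda$, splitting $\int u_{\mathrm{ref}}' e^{-\nu_{t_k}}$ into $[0,R]$ and $[R,\infty[$ and letting $R\to\infty$ after $k\to\infty$ yields
\[
\int_0^\infty u_{\mathrm{ref}}' e^{-\nu_{t_k}} dx \;\longrightarrow\; \lambda V.
\]
For $\int_0^\infty j' e^{-\nu_{t_k}} dx$ the same idea applies at infinity, where $j'(x) = -m_{\alpha_1}\coth(x) - 2m_{2\alpha_1}\coth(2x) \to -(m_{\alpha_1}+2m_{2\alpha_1})$. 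The one delicate point is the singularity $j'(x) = O(1/x)$ as $x\to 0$; this is absorbed by the factor $e^{-j(x)} \leq e^{-\nu_{t_k}(x)+O(1)}$ hidden in $e^{-\nu_{t_k}}$ (since $u_{t_k}$ and $u_{\mathrm{ref}}$ are uniformly bounded on any fixed neighborhood of $0$ by the $C^0$ bounds on $m_t$ and the reference), which makes $j'e^{-\nu_{t_k}} = O(x^{m_{\alpha_1}+2m_{2\alpha_1}-1})$ integrable near $0$, uniformly in $k$. Combined with the concentration away from $0$, this gives
\[
\int_0^\infty j' e^{-\nu_{t_k}} dx \;\longrightarrow\; -(m_{\alpha_1}+2m_{2\alpha_1}) V.
\]

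Passing to the limit in (\ref{eq:vanishing}) and dividing through by $V>0$ then gives
\[
t_\infty\bigl(\barDH - (m_{\alpha_1}+2m_{2\alpha_1})\bigr) \;=\; (t_\infty - 1)\bigl(\lambda - (m_{\alpha_1}+2m_{2\alpha_1})\bigr),
\]
as required. The hardest step is the uniform control of $j' e^{-\nu_{t_k}}$ near $x=0$; everything else is a routine concentration-of-mass argument once Proposition~\ref{prop_m_t} provides the uniform linear growth bound.
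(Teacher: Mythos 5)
Your overall strategy matches the paper's: the first integral is computed exactly as $V\barDH$ by the change of variables $y=u_{t_k}'(x)$, and the other two are handled by a concentration argument using the linear growth bound from Proposition~\ref{prop_m_t} together with $x_{t_k}\to\infty$. The one place you genuinely diverge is in controlling the singularity of $j'$ at $x=0$. The paper's proof writes $j' = \nu_k' - t_ku_{t_k}' - (1-t_k)u_{\mathrm{ref}}'$ and exploits the explicit antiderivative $-e^{-\nu_k}$ of $\nu_k'e^{-\nu_k}$, so the tail integral is bounded by boundary evaluations $e^{-\nu_k(x_k\pm\delta)}$ and never touches $x=0$ directly. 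You instead bound $j'e^{-\nu_{t_k}}$ pointwise near $0$ by noting that $e^{-j}=J$ vanishes there. Both work; your route is more direct but requires the extra input that $e^{-(t_ku_{t_k}+(1-t_k)u_{\mathrm{ref}})}$ is uniformly bounded near $0$. This is indeed available: $m_{t_k}\geq -C_m$ gives $u_{t_k}(1)$ bounded below (for $t_k\geq t_0$), then $|u_{t_k}'|\leq\lambda$ and evenness give $u_{t_k}(0)$ bounded below. The paper's antiderivative trick avoids needing any pointwise control of $u_{t_k}$.

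Two small slips worth flagging. First, the inequality you wrote, $e^{-j}\leq e^{-\nu_{t_k}+O(1)}$, points the wrong way for your purposes: what you need is $e^{-\nu_{t_k}}\leq e^{O(1)}e^{-j}=e^{O(1)}J$, which requires only the uniform \emph{lower} bound on $u_{t_k}$ near $0$ (you do not need an upper bound). Second, $J(x)\sim c\,x^{n_1+n_2}$ near $0$, so the correct exponent is $m_{\alpha_1}+m_{2\alpha_1}-1$, not $m_{\alpha_1}+2m_{2\alpha_1}-1$; since $m_{\alpha_1}\geq 1$ the conclusion of uniform integrability near $0$ holds either way.
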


Before proving the lemma, we show that it allows to conclude.
Indeed, Lemma~\ref{lem_limits} implies $t_{\infty}=(\lambda-n_1-2n_2)/(\lambda-\barDH)$, which is 
a contradiction with $t_{\infty}\leq t'<(\lambda-n_1-2n_2)/(\lambda-\barDH)$. 

\begin{proof}
By Equation~(\ref{eqn_u_t}), Legendre transform 
and the definition of $\barDH$, we have 
$\int_0^{\infty}u_{t_k}'e^{-\nu_{t_k}}= V\cdot \barDH$ for all $k$.

Let us abbreviate indices $t_k$ by $k$ in the rest of the proof.
Let $\epsilon>0$.
Recall that $\nu_k(x)\geq  \ell_1|x-x_k|-\ell_0$.
We may thus fix a $\delta>0$ independent of $k$ such that 
\begin{equation*}
\int_{]0,\infty[\setminus [x_k-\delta,x_k+\delta]}e^{-\nu_k(x)}dx 
\leq \int_{\mathbb{R}\setminus [x_k-\delta,x_k+\delta]}e^{-( \ell_1|x-x_k|-\ell_0)}dx 
\leq \epsilon,
\end{equation*}
and 
\begin{equation*}
e^{-\nu_k(x_k\pm\delta)}<\epsilon.
\end{equation*}

Fix some $s>0$. Since $x_k\rightarrow \infty$, there exists $k_0$ 
such that for any $k\geq k_0$,
$-\epsilon<u_{\mathrm{ref}}'-\lambda<0$ and  
$-\epsilon<j'+m_{\alpha_1}+2m_{2\alpha_1}<0$ on $[x_k-\delta,x_k+\delta]$.

Then we can write, for $k\geq k_0$, 
\begin{align*}
\left|\int_0^{\infty}u_{\mathrm{ref}}'e^{-\nu_k} -\lambda V\right|
& \leq \left|\int_{]0,\infty[\setminus [x_k-\delta,x_k+\delta]}u_{\mathrm{ref}}'e^{-\nu_k}\right|+
\left|\int_{[x_k-\delta,x_k+\delta]}(u_{\mathrm{ref}}'-\lambda)e^{-\nu_k}\right| \\
& \qquad +\left|\lambda\int_{]0,\infty[\setminus [x_k-\delta,x_k+\delta]}e^{-\nu_k}\right| \\
& \leq \lambda\epsilon+\epsilon V+\lambda\epsilon.
\end{align*}

The proof for for the integral involving $j'$ follows the same lines, the only 
difference being to control 
\begin{equation*}
\left|\int_{]0,\infty[\setminus [x_k-\delta,x_k+\delta]}j'e^{-\nu_k}\right|.
\end{equation*}
To this end we use the definition of $\nu_k$ and write 
\begin{equation*}
\left|\int_{]0,\infty[\setminus [x_k-\delta,x_k+\delta]}j'e^{-\nu_k}\right|
\leq \left|\int_{]0,\infty[\setminus [x_k-\delta,x_k+\delta]}\nu_{k}'e^{-\nu_k}\right|
+\left|\int_{]0,\infty[\setminus [x_k-\delta,x_k+\delta]}u_{\mathrm{ref}}'e^{-\nu_k}\right| 
\end{equation*}
The second term is controlled as before. 
The first term on the other hand is less than $2\epsilon$ by 
integration and definition of $\delta$. 
\end{proof}

\subsection{$C^2$ estimates}

We turn to \emph{a priori} estimates on $u_t''$. 
Note that the equation at $t$ may be written 
\[ u_t''=e^{-u_{\mathrm{ref}}}Je^{t(u_{\mathrm{ref}}-u_t)}/P(u_t'). \]
Consider again a fixed primitive $Q$ of $P$. It is strictly increasing 
on $[0,\lambda]$. By the properties of $P$, we may find a positive constant 
$C>0$ such that 
\[ y^{n_1+n_2+1}/C \leq Q(y)-Q(0) \leq Cy^{n_1+n_2+1}, \]
\[ (\lambda-y)^{k+1}/C \leq Q(\lambda)-Q(y) \leq C(\lambda-y)^{k+1} \]
and 
\[ y^{n_1+n_2}(\lambda-y)^k/C \leq P(y) \leq Cy^{n_1+n_2}(\lambda-y)^k. \]
Thanks to the $C^0$-estimates, we may further choose this constant $C$ so that 
\[ 1/C \leq e^{t(u_{\mathrm{ref}}-u_t)} \leq C \]
independently of the value of $t$. 

Using the previous inequalities in reverse order, we get 
\[ C^{-1}\frac{P(u_0')}{P(u_t')} 
	\leq \frac{u_t''}{u_0''}=\frac{P(u_0')}{P(u_t')}e^{t(u_{\mathrm{ref}}-u_t)} 
	\leq C\frac{P(u_0')}{P(u_t')} \]
then 
\[ C^{-3}\frac{(u_0')^{n_1+n_2}(\lambda-u_0')^k}{(u_t')^{n_1+n_2}(\lambda-u_t')^k} 
	\leq \frac{u_t''}{u_0''} \leq 
	C^3\frac{(u_0')^{n_1+n_2}(\lambda-u_0')^k}{(u_t')^{n_1+n_2}(\lambda-u_t')^k} \]
and, using the first two inequalities:
\[ \tilde{C}^{-1}
\leq \frac{u_t''}{u_0''} \left(\frac{Q(u_0')-Q(0)}{Q(u_t')-Q(0)}\right)^{\frac{-n_1-n_2}{n_1+n_2+1}} 
\left(\frac{Q(\lambda)-Q(u_0')}{Q(\lambda)-Q(u_t')}\right)^{\frac{-k}{k+1}} \leq 
\tilde{C} \]
where $\tilde{C}=C^{3+\frac{2n_1+2n_2}{n_1+n_2+1}+\frac{2k}{k+1}}$.

We now remember the integral equation  
$Q(u_t'(x))-Q(0) = \int_0^xe^{-u_{\mathrm{ref}}}Je^{t(u_{\mathrm{ref}}-u_t)}$
again and deduce  
\[ C^{-1}(Q(u_0'(x))-Q(0)) 	\leq Q(u_t'(x))-Q(0) \leq C(Q(u_0'(x))-Q(0)) \]
and similarly
\[ C^{-1}(Q(\lambda)-Q(u_0'(x))) 	\leq Q(\lambda)-Q(u_t'(x)) \leq C(Q(\lambda)-Q(u_0'(x))). \]
Putting everything together yields the final estimate comparing $u_t''$ and $u_0''$:
\[ C^{-3-\frac{3n_1+3n_2}{n_1+n_2+1}-\frac{3k}{k+1}} 
	\leq \frac{u_t''}{u_0''} 
	\leq C^{3+\frac{3n_1+3n_2}{n_1+n_2+1}+\frac{3k}{k+1}}. \]
	
\subsection{Closedness}

Assume now that $t_j\in I$, $t_j\rightarrow t$, and we have uniform 
$C^0$ and $C^2$ estimates on $u_{t_j}$ as obtained in other sections
(note that $C^1$ estimates are immediate in view of the restriction 
$u_{t_j}'(\mathbb{R})=]-\lambda,\lambda[$). 
Using Arzela-Ascoli, we obtain a limit function $u_t$ which is $C^1$, 
with locally uniform convergence of $u_{t_j}$ to $u_t$ and of 
$u_{t_j}'$ to $u_t'$. As a consequence, we also know that $u_t$ is an 
even function and that $u_t(x)-\lambda |x|$ is bounded. It remains to 
check that $u_t$ is $C^2$.  Using the equation we have  
$u_{t_j}''=e^{-(t_ju_{t_j}+(1-t_j)u_{\mathrm{ref}})}J/P(u_{t_j}')$. 
Combined with the fact that $u_{t_j}'/u_0'$ is uniformly bounded (this 
follows from the same techniques as the $C^2$ estimates), we obtain that 
$u_{t_j}''$ converges locally uniformly on $\mathbb{R}\setminus \{0\}$ to 
$e^{-(tu_t+(1-t)u_{\mathrm{ref}})}J/P(u_t')$, hence $u_t$ is $C^2$ on 
$\mathbb{R}\setminus \{0\}$ with this same second derivative. 
To conclude, it remains to check that $u_t''$ admits a limit at $0$. 

Note that $u_t$ still satisfies the integral equation 
\[ Q(u_t')-Q(0)=\int_0^xe^{-(tu_t+(1-t)u_{\mathrm{ref}})}J \]
outside $0$.
The polynomial $P$ and $Q$ have the following behavior at $0$:
$P(y)\simeq y^{n_1+n_2}P^{(n_1+n_2)}(0)/(n_1+n_2)!$ and 
$Q(y)-Q(0)\simeq y^{1+n_1+n_2}P^{(n_1+n_2)}(0)/(1+n_1+n_2)!$. 
As a consequence, from the integral equation we have at $0$,
\[ (u_t')^{1+n_1+n_2}\simeq e^{-(tu_t+(1-t)u_{\mathrm{ref}})(0)}(n_1+n_2)!x^{1+n_1+n_2}/P^{(n_1+n_2)}(0) \]
hence $u_t''$ does admit a limit at $0$, hence $u_t$ is $C^2$
on $\mathbb{R}$ with 
\[ u_t''(0)=\left(e^{-(tu_t+(1-t)u_{\mathrm{ref}})(0)}(n_1+n_2)!/P^{(n_1+n_2)}(0)\right)^{1/(1+n_1+n_2)} \]
This ends the proofs of Proposition~\ref{prop:cont} and Theorem~\ref{thm:ODE}. 

\section{Construction of an asymptotically Ricci flat metric}
\label{sec:constr-an-asympt}

Let $G/H$ be an indecomposable rank two (complex) symmetric space. We use the notations introduced in Section~\ref{sec:setup}.
We introduce the three constants $b$, $a_0$ and $b_1$ defined by 
\[ b=2A_2/n \qquad \qquad a_0\tilde{\alpha}_1=b_1\alpha_1+b\tilde{\alpha}_2. \]

\subsection{Approximate solution near $D_2$}
\label{sec:appr-solut-near}

Near (the open $G$-orbit of) $D_2$, that is, when $\alpha_2\rightarrow \infty$ and $\alpha_1$ is bounded, 
we use a Tian-Yau like ansatz. 
We define a potential
\[ \rhodeux=\exp \beta, \quad \beta=b\ta_2+\psi(\alpha_1), \]
where $b$ is the constant defined by $b=2A_2/n$, 
and $\psi$ is a solution to a positive Kähler-Einstein equation on the open orbit of $D_2$, 
with asymptotic behavior imposed by the Ricci flat equation on $G/H$ as we will check. 
More precisely, we assume that the function defined by $u=n\psi+C$, 
where $C=-\ln(2^{n-2-m_{\alpha_1}-m_{2\alpha_1}}b^2n^{1-n})$, is a smooth, strictly convex, even 
solution to the equation 
\begin{equation}
\label{eqn:eqn-psi} 
u''P_{DH}(2A_2\ta_2+u'\alpha_1)=e^{-u}J 
\end{equation}
with $J(x)=\sinh^{m_{\alpha_1}}(x)\sinh^{2m_{\alpha_1}}(2x)$ 
and $P_{DH}$ is the Duistermaat-Heckman polynomial for $G/H$. 
Furthermore, we assume that the function $u$ satisfies the condition
\begin{equation}
u(x)-|nb_1x|=O(1).
\end{equation}
Now, not only are we in a position to apply Theorem~\ref{thm:ODE} to check when such a function 
exists, but one can check that we are exactly in the example of situation described in 
Section~\ref{sec:facets}. The function $u$, if it exists, is thus the potential of a singular 
Kähler-Einstein metric on the colored $\mathbb{Q}$-Fano compactification of the open orbit in $D_2$, 
which is some $G$-equivariant blow-down of $D_2$.  
It follows from Section~\ref{sec:checks} that it is possible to find such a function $\psi$ in all 
cases except when $G=G_2$, in which case only one choice of ordering of the roots allows the 
function $\psi$ to exist. 

Let us check that this gives indeed an asymptotic solution of the Monge-Ampère equation: we obtain
\begin{align*}
  d^2\rhodeux & = \rhodeux \big( d^2\beta+(d\beta)^2 \big) \\
           & = \rhodeux \big( \psi''(\alpha_1) \alpha_1^2 + (b\ta_2+\psi'(\alpha_1)\alpha_1)^2 \big).
\end{align*}
Therefore, using equation \eqref{eqn:eqn-psi},
\begin{align*}
  \det(d^2\rhodeux) \prod_{\alpha\in \rRoots^+}\langle\alpha,d\rhodeux\rangle^{m_\alpha} & 
  = (\rhodeux)^n b^2\psi'' \prod_{\alpha\in \rRoots^+}\langle b\ta_2+\psi'\alpha_1,\alpha\rangle^{m_{\alpha}} \\
  & = 2^{m_{\alpha_1}+m_{2\alpha_1}+2-n}e^{nb\ta_2} \sinh^{m_{\alpha_1}}(\alpha_1) \sinh^{m_{2\alpha_1}}(2\alpha_1).
\end{align*}
Thanks to the symmetry of the root system $\ta_2\pm \lambda\alpha_1$, we have
\[ \sum_{\alpha\in \rRoots^+,\alpha_1\nmid\alpha} m_\alpha \alpha = 2 A_2 \ta_2 = nb \ta_2, \]
and it follows that
\begin{align*}
  \det(d^2\rhodeux) \prod_{\alpha\in \rRoots^+}\langle\alpha,d\rhodeux\rangle^{m_\alpha}
    & = \sinh^{m_{\alpha_1}}(\alpha_1) \sinh^{m_{2\alpha_1}}(2\alpha_1) \prod_{\alpha\in \rRoots^+,\alpha_1\nmid\alpha}e^{m_\alpha \alpha}/2 \\
  & = (1+O(e^{-2\alpha_2})) \prod_{\alpha\in \rRoots^+} \sinh^{m_\alpha}(\alpha) ,
\end{align*}
where $O(e^{-2\alpha_2})$ means functions whose all derivatives with respect to $\alpha_1$ or $\alpha_2$ are bounded by $\mathrm{cst.} e^{-2\alpha_2}$.

Rewriting the Ricci flat equation \eqref{eq:12} as
\begin{equation}
  \label{eq:8}
  \opP(\rhodeux) := \ln\det(d^2\rhodeux) + \sum_{\alpha\in \rRoots^+}m_\alpha \big( \ln \langle\alpha,d\rhodeux\rangle - \ln \sinh \alpha \big) = 0,
\end{equation}
we finally conclude that $\rhodeux$ is an approximate solution when $\alpha_2\rightarrow\infty$ in the sense that for all $\ell$ we have
\begin{equation}
  \label{eq:21}
  |\nabla^\ell \opP(\rhodeux)| \leq c_\ell e^{-2\alpha_2}.
\end{equation}
The solution is good near $D_2$ except when we become close to $D_1$ ($\alpha_1\rightarrow\infty$), where we will construct another model in the next section. It is also important to note (as we will see in Section~\ref{sec:solut-kahl-ricci}) that the geometry when we approach $D_2$ is conical,
and in particular the radius in the cone is \[ r\sim\sqrt{\rhodeux}. \] From the inequality $b<2$ (in all root systems except $G_2$, only $\alpha_2$ and $2\alpha_2$ appear in the root system and this implies immediately $b=2A_2/n<2$; in the $G_2$ case, this is also true, see the tables in §~\ref{sec:summary-constants}), it then follows from \eqref{eq:21} that, when $\alpha_1$ remains bounded and $\alpha_2\rightarrow\infty$,
\begin{equation}
  \label{eq:22}
  P(\rhodeux) = O(r^{-4/b}), \quad4/b > 2.
\end{equation}
which is a good initial control. Our aim now is to construct an asymptotic solution near $D_1$ which can be glued to this one in order to extend the control \eqref{eq:22} to a whole neighborhood of infinity.

\subsection{Approximate solution near $D_1$}  

Near $D_1$, we need to find an asymptotic solution with a good enough
control, and to glue it to the Tian-Yau ansatz produced in
Section~\ref{sec:appr-solut-near}.  Note that from
Proposition~\ref{prop:cont-exp}, $\psi$ admits a precise asymptotic
expansion as $x\rightarrow \infty$. In particular, we introduce the constants
$K_1$, $K_2$ and $a_1$ by
\begin{equation}
\psi(x)=b_1x+K_1+K_2e^{-a_1x}+o(e^{-a_1x}).
\end{equation}
Note that the expression of
$a_1$ was given in Section~\ref{sec:KE}, it is
$a_1=(nb_1-m_{\alpha_1}-2m_{2\alpha_1})/(1+m_{\alpha_2}+m_{2\alpha_2})$.

We define $\zeta=-\frac{\langle\alpha_1,\alpha_2\rangle}{\langle\alpha_2,\alpha_2\rangle}$ so that $\ta_1=\alpha_1+\zeta\alpha_2$.

\begin{prop}\label{prop:refin-model-near-1}
  When $\alpha_1\rightarrow\infty$ there is a development
  \begin{equation}
  \rhoun \sim e^{K_1}e^{a_0\ta_1} \big( 1 + \sum_{k\geq1} e^{-a_k\ta_1}R_k(\alpha_2), \big)\label{eq:17}
\end{equation}
  where $R_k$ is an even function of $\alpha_2$, such that:
  \begin{enumerate}
  \item $0<a_1<a_2<\cdots$, and for $i\geq2$ one has $a_i\in a_1\Bbb{N}+2\Bbb{N}$;
  \item for every $k\geq1$, if $\rhoun_k$ is the truncation of the development at order $k$, then
    \begin{equation}
    |\nabla^\ell \opP(\rhoun_k)| \leq C_{k,\ell} e^{-a_k\alpha_1};\label{eq:23}
  \end{equation}
  \item when $\alpha_2\rightarrow\infty$ then $R_k(\alpha_2) = e^{a_k \zeta \alpha_2} (r_k + O(e^{-2\alpha_2}))$, where $O(e^{-2\alpha_2})$ denotes a function whose all derivatives are $O(e^{-2\alpha_2})$.
  \end{enumerate}
\end{prop}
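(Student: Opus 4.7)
The plan is to construct $\rhoun$ as an asymptotic series, inductively in the exponents $a_k$. The starting point is $\rhoun_0:=e^{K_1}e^{a_0\ta_1}$, which matches $\rhodeux$ to leading order in the overlap regime $\alpha_1,\alpha_2\to\infty$: using the relation $a_0\ta_1=b_1\alpha_1+b\ta_2$ together with $\psi(\alpha_1)=b_1\alpha_1+K_1+K_2e^{-a_1\alpha_1}+o(e^{-a_1\alpha_1})$ from Proposition~\ref{prop:cont-exp}, one has $\rhodeux=e^{K_1}e^{a_0\ta_1}(1+K_2e^{-a_1\alpha_1}+\cdots)$ in this regime. The general $\rhoun_k$ is then obtained by adding to $\rhoun_{k-1}$ a correction of the form $e^{K_1}e^{(a_0-a_k)\ta_1}R_k(\alpha_2)$ designed to cancel the leading error in $\opP(\rhoun_{k-1})$; because the correction depends only on $\alpha_2$, each $R_k$ is determined by a single-variable ordinary differential equation.

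To derive the equation for $R_1$, I would substitute $\rhoun_1=e^{K_1}e^{a_0\ta_1}(1+e^{-a_1\ta_1}R_1(\alpha_2))$ into $\opP$ and track the leading order as $\alpha_1\to\infty$. A direct computation (necessary because $\rhoun_0$ alone has rank-one Hessian) yields $\det(d^2\rhoun_1)\sim a_0^2 e^{2K_1}e^{(2a_0-a_1)\ta_1}R_1''(\alpha_2)$. For each positive restricted root $\alpha$ not parallel to $\alpha_2$, one has $\langle\alpha,\ta_1\rangle\neq 0$ and $\langle\alpha,d\rhoun_1\rangle\sim a_0\rhoun_0\langle\alpha,\ta_1\rangle$; but for $\alpha\in\{\alpha_2,2\alpha_2\}\cap\rRoots^+$ one has $\langle\alpha,\ta_1\rangle=0$, so the leading contribution drops to order $e^{(a_0-a_1)\ta_1}R_1'$, bringing an extra factor $e^{-a_1(m_{\alpha_2}+m_{2\alpha_2})\ta_1}$ in $\prod\langle\alpha,d\rhoun_1\rangle^{m_\alpha}$. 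Matching the resulting leading $\ta_1$-exponent of $\opP(\rhoun_1)$ against $\sum_\alpha m_\alpha\alpha=2\varpi$, and using the symmetry of $\rRoots^+\setminus\{\alpha_1,2\alpha_1\}$ under $s_{\alpha_1}$ to evaluate $\langle 2\varpi,\alpha_1\rangle=(m_{\alpha_1}+2m_{2\alpha_1})\langle\alpha_1,\alpha_1\rangle$, produces the expected value $a_1=(nb_1-m_{\alpha_1}-2m_{2\alpha_1})/(1+m_{\alpha_2}+m_{2\alpha_2})$ from Section~\ref{sec:KE}, and the $\alpha_2$-dependent part yields an explicit second-order ODE for $R_1$. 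For $k\geq 2$ the Hessian of $\rhoun_{k-1}$ is already nondegenerate; linearizing $\opP$ at $\rhoun_{k-1}$ gives a linear ODE $\mathcal{L}R_k=-F_k$ in $\alpha_2$ with even coefficients and $F_k$ polynomial in the previously built $R_j,R_j',R_j''$. The new exponents $a_k$ belong to $a_1\mathbb{N}+2\mathbb{N}$ because further orders in $e^{-\ta_1}$ arise only from products of previously introduced $e^{-a_j\ta_1}$ factors (contributing $ja_1$) and from the subleading expansion $\sinh\alpha=\tfrac12 e^\alpha(1-e^{-2\alpha})$ for each root involving $\alpha_1$ (contributing $+2,+4,\ldots$).

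Each ODE is then solved under two constraints. Evenness of $R_k$ in $\alpha_2$ follows from invariance under the Weyl reflection $s_{\alpha_2}$, which fixes $\ta_1$ (since $\ta_1\perp\alpha_2$) and negates $\alpha_2$; by induction $F_k$ is even, so the even one-dimensional subspace of the solution space of $\mathcal{L}$ is naturally selected. The remaining additive freedom is fixed by matching: rewriting $\rhodeux$ in the overlap region using the full expansion of $\psi$ from Proposition~\ref{prop:cont-exp}, each $R_k$ must have leading growth $r_ke^{a_k\zeta\alpha_2}$ at $+\infty$ with $r_k$ recursively determined from the expansion coefficients of $\psi$, while the subleading $O(e^{-2\alpha_2})$ is then forced by the ODE itself. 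The decay estimate \eqref{eq:23} follows because, after this matching, $e^{-a_k\ta_1}R_k(\alpha_2)=e^{-a_k\alpha_1}(r_k+O(e^{-2\alpha_2}))$ is uniformly bounded by a multiple of $e^{-a_k\alpha_1}$, and the next leading error of $\opP(\rhoun_k)$ is by construction of size $e^{-a_{k+1}\alpha_1}$. The main obstacle is precisely the coupling between the local ODE analysis and the asymptotic matching against $\rhodeux$: verifying that the unique even solution of $\mathcal{L}R_k=-F_k$ has exactly the prescribed leading growth $r_ke^{a_k\zeta\alpha_2}$ ultimately reduces to a Wronskian-type analysis of $\mathcal{L}$ using the Duistermaat-Heckman weight $P_{DH}$, which is the same weight governing the ODE defining $\rhodeux$, so that the matching constants identify consistently with the coefficients of $\psi$.
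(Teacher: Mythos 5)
Your proposal is essentially the same inductive strategy as the paper: take the leading term $e^{K_1+a_0\ta_1}$, correct it order by order in $e^{-\ta_1}$, and at each stage reduce the correction to a single-variable problem in $\alpha_2$. The exponent bookkeeping $a_k\in a_1\mathbb{N}+2\mathbb{N}$ from products of earlier corrections and from the $\sinh$ expansion is also the right idea (the paper packages this as the graded algebra $\cA$ of formal developments with $\cA_\delta\cA_{\delta'}\subset\cA_{\delta+\delta'}$).

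Two points deserve attention. First, you derive an ODE for $R_1$ and leave it as ``an explicit second-order ODE''; the paper instead takes $R_1=w$ to be the Stenzel potential on the rank-one symmetric-space fiber $X_1$ of $D_1\to G/P_1$. This is not merely cosmetic: identifying $R_1$ as a Stenzel potential gives you for free its existence, strict convexity, evenness, and the asymptotic expansion \eqref{eq:15}, and it also explains why the operator $\Delta_2$ appearing in the linearization (the Laplacian of the Stenzel metric on $X_1$) is the right object to solve at subsequent orders. Your ``Wronskian-type analysis using the Duistermaat-Heckman weight'' is a placeholder for what the paper actually does, which is invoke the mapping properties of $\Delta_2:C^k_\delta\to C^{k-2}_{\delta-\zeta a_1}$ on weighted spaces (surjective with kernel the constants), yielding directly the leading growth $Be^{2a_1\zeta\alpha_2}$ up to the adjustable constant.

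Second, your statement that for $k\geq 2$ ``linearizing $\opP$ at $\rhoun_{k-1}$ gives a linear ODE $\mathcal{L}R_k=-F_k$ in $\alpha_2$'' is only true at leading order in $e^{-\ta_1}$. The linearization $L$ applied to $e^{(a_0-a_k)\ta_1}R_k(\alpha_2)$ is a genuinely two-variable expression; in the paper (formula \eqref{eq:10}) it is $e^{-K_1-a_0\ta_1}\big(e^{a_1\ta_1}\Delta_2 + \text{lower order}\big)$, so only the $\Delta_2$ part is solved and the remaining terms are shown to land back in $\cA_{a_{k+1}}$. Without making this structure explicit, the induction does not obviously close — you need the multiplicativity of the error algebra to control both the lower-order linear contributions and the nonlinear remainders. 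You also fix the additive constant of $R_k$ by matching against $\rhodeux$; the paper instead fixes it inside the weighted-space argument and proves the consistency with $\rhodeux$ separately in Lemma~\ref{lem:est-diff}, which avoids any circularity between the two constructions.
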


It is important to note that the terms $e^{-a_k\ta_1+\zeta a_k\alpha_2}=e^{-a_k\alpha_1}$ are actually bounded when $\alpha_2\rightarrow\infty$.

The first truncation $\rhoun_1$ and corresponding function $R_1$ plays an important 
role in understanding the geometry of this model. Let us denote the function $R_1$ by $w$. 
It is obtained as a potential of the Stenzel metric on the symmetric space fiber of the open $G$-orbit 
in $D_1$, in the notations of Example~\ref{exa-Stenzel}.
More precisely, the function $w$ is an (even, smooth, strictly convex) solution to the equation 
\[ Cw''(x)(w'(x))^{m_{\alpha_2}+m_{2\alpha_2}}=\sinh^{m_{\alpha_2}}(x)\sinh^{m_{2\alpha_2}}(2x) \]
with 
$C=2^{n-2-m_{\alpha_2}}a_0^2e^{nK_1}|\alpha_2|^{2(m_{\alpha_2}+m_{2\alpha_2})}
\prod_{\alpha_2 \nmid \alpha\in\rRoots^+}\langle\alpha,a_0\ta_1\rangle^{m_{\alpha}}$. 
Such a solution is defined up to an additive constant, and admits an expansion when $x\rightarrow \infty$ 
which by choosing the additive constant is of the form 
\begin{equation}
 w(\alpha_2) = K_2e^{a_1\zeta\alpha_2} \big( 1 + \sum_{k\geq1} w_ke^{-2k\alpha_2} \big),\label{eq:15}
\end{equation}
for some constants $w_k$. 
Note that one verifies easily from the two one variable equations that the constant $K_2$ and $a_1$ 
in the expansion of $w$ are the same as that in the expansion of $\psi$.

\begin{proof}
  If $k=1$ we take $R_1=w$ so that
  \[ \rhoun_1 = e^{K_1}e^{a_0 \ta_1} \big( 1 + e^{-a_1\ta_1}w(\alpha_2) \big). \]
  Then
  \begin{align*}
 d^2\rhoun_1 = e^{K_1}e^{a_0 \ta_1}& \big( (a_0^2+(a_0-a_1)^2e^{-a_1\ta_1})w(\alpha_2) \ta_1^2 \\ & + (a_0-a_1) e^{-a_1\ta_1}w'(\alpha_2) (\ta_1\alpha_2+\alpha_2\ta_1)\\ & +e^{-a_1\ta_1}w''(\alpha_2) \alpha_2^2 \big).
  \end{align*}
In particular one obtains
\begin{equation}
  \label{eq:13}
  \det(d^2\rhoun_1) = e^{2K_1}a_0^2w''(\alpha_2)e^{(2a_0-a_1)\ta_1} \big(1 + \tfrac{(a_0-a_1)^2}{a_0^2}e^{-a_1\ta_1} ( w(\alpha_2)-\tfrac{w'(\alpha_2)^2}{w''(\alpha_2)}) \big).
\end{equation}
On the other hand, one has
\begin{equation}
  \label{eq:14}
  \langle\alpha,d\rhoun_1\rangle = e^{K_1}e^{(a_0-a_1)\ta_1} w'(\alpha_2) \langle\alpha,\alpha_2\rangle \\
\end{equation}
if $\alpha_2 \mid \alpha$, and
\begin{equation}\label{eq:34}
\langle\alpha,d\rhoun_1\rangle =   e^{K_1}a_0\langle\alpha,\ta_1\rangle e^{a_0\ta_1} \big( 1 + e^{-a_1\ta_1} (\tfrac{a_0-a_1}{a_0}w(\alpha_2) + \tfrac{\langle\alpha,\alpha_2\rangle}{a_0\langle\alpha,\ta_1\rangle} w'(\alpha_2)) \big)  
\end{equation}
if $\alpha_2 \nmid \alpha$.

We define an algebra $\cA$ of formal developments
\[ \cA = \big\{ \sum_{k\geq 1} e^{-a_k\ta_1}f_k(\alpha_2) \big\}, \]
where the coefficients $0\neq a_k\in a_1\Bbb{N}+2\Bbb{N}$ and $f_k$ is an even function satisfying, when $\alpha_2\rightarrow\infty$,
\[ f_k(\alpha_2) = e^{a_k\zeta\alpha_2} (A_k + O(e^{-2\alpha_2})), \]
and all the derivatives of $f_k$ satisfy the same development. More generally we define $\cA_\delta\subset \cA$ as the subalgebra of developments with exponents $a_k\geq\delta$, and we observe that
\[ \cA_\delta \cA_{\delta'} \subset \cA_{\delta+\delta'}. \]

With this formalism, putting together \eqref{eq:13}, \eqref{eq:14}, \eqref{eq:34} and \eqref{eq:15}, it follows that
\begin{equation}
  \label{eq:16}
  \opP(\rhoun_1) \in \cA_{a_1}.
\end{equation}
  
  The linearization of equation \eqref{eq:8} is
  \begin{equation}
    Lf = \tr\big( (d^2\rhoun)^{-1}d^2f \big) + \sum_{\alpha\in \rRoots^+}m_\alpha \frac{\langle\alpha,df\rangle}{\langle\alpha,d\rhoun\rangle}.
  \end{equation}
Writing $df=\partial_{\tilde 1}f \ta_1 + \partial_2f \alpha_2$, where $\partial_{\tilde 1}f=\frac{H_{\ta_1}f }{|\ta_1|^2}$ and $\partial_2f=\frac{H_{\alpha_2}f}{|\alpha_2|^2}$, 
we obtain the formula
\begin{multline}
  \label{eq:10}
  Lf = \frac1{e^{K_1}e^{a_0\ta_1}} \Big( e^{a_1\ta_1} \Delta_2f + 
  a_0^{-2} \partial_{\tilde 1}^2f + a_0^{-1} (n-1-d_2) \partial_{\tilde 1}f \\ + 
  a_0^{-2}(a_0-a_1)^2\frac{w(\alpha_2)}{w''(\alpha_2)}\partial_2^2f 
  + O(e^{-a_1\alpha_1}d^2f) + O(e^{-a_1\alpha_1}df)   \Big),
\end{multline}
where the term $O(e^{-a_1\alpha_1}d^2f)$ means terms in the second derivatives of $f$ with coefficients which are $O(e^{-a_1\alpha_1})$ (with all their derivatives with respect to $\ta_1$ or $\alpha_2$); and $\Delta_2$ is the Laplacian on the symmetric space defined by
\begin{equation}
  \Delta_2f = \frac{\partial_2^2f}{w''(\alpha_2)} + (d_2-1) \frac{\partial_2f}{w'(\alpha_2)}.
\end{equation}

Therefore when $\alpha_1\rightarrow\infty$ the leading order term of $L$ is given just by
\[ e^{-K_1}e^{(-a_0+a_1)\ta_1}\Delta_2. \]
From weighted analysis, we know that if $\delta>0$ then the Laplacian
\[ \Delta_2 : C^k_\delta \longrightarrow C^{k-2}_{\delta-\zeta a_1} \]
is surjective, with kernel reduced to the constants.

Now we can correct our first approximate solution $\rhoun_1$ using the linearization of the equation : from \eqref{eq:16}, we have
\[ \opP(\rhoun_1) = e^{-a_1\ta_1} g(\alpha_2) + h, \]
where
\begin{itemize}
\item $g(\alpha_2)$ is an even function satisfying $g(\alpha_2)=e^{a_1 \zeta \alpha_2} (A+O(e^{-2\alpha_2}))$;
\item   $h\in \cA_{a_2}$, where $a_2=\inf(2a_1,2)$.
\end{itemize}
Then we solve the equation
\begin{equation}
\Delta_2f=g\label{eq:32}
\end{equation}
with $f\in C^k_{\zeta a_1}$ for all $k$ ($f$ is well-defined up to a constant); the form of $\Delta_2$ tells us that, maybe after adjusting the constant if $\zeta a_1<2$,
\begin{equation}
 f(\alpha_2) = e^{2a_1 \zeta \alpha_2} (B+O(e^{-2\alpha_2})),\label{eq:33}
\end{equation}
where the term $O(e^{-2\alpha_2})$ again means that all derivatives have the same decay. This is exactly the required expansion so that the function
\begin{equation}
  \rhoun_2 = \rhoun_1 - e^{K_1} e^{(a_0-2a_1)\ta_1} f(\alpha_2),
\end{equation}
has the form expected in the statement of the proposition. If we apply the other terms of the linearization $L$ defined in \eqref{eq:10} to $e^{(a_0-2a_1)\ta_1} f(\alpha_2)$, we obtain a function in $\cA_{a_2}$; the nonlinear terms of $\opP(\rhoun_2)$ also behave well thanks to the multiplication properties in $\cA$, so that one obtains finally
\[ \opP(\rhoun_2) \in \cA_{a_2}. \]
We can iterate this procedure to construct inductively $\rhoun_k$, and this gives the proposition.
\end{proof}

\subsection{The approximate solution}
\label{sec:approximate-solution}

Near the divisor $D_2$ we have the other approximate solution $\rhodeux=e^{b\ta_2+\psi(\alpha_1)}$, 
with $\psi(\alpha_1)$ satisfying the equation \eqref{eqn:eqn-psi}. We have an asymptotic development
\begin{equation}
 \psi(\alpha_1) \sim b_1\alpha_1 + K_1 + \sum_{k\ge1} c_k e^{-a_k\alpha_1}, \quad a_k\in a_1\Bbb{N}+2\Bbb{N}.\label{eq:18}
\end{equation}

\begin{lem}
\label{lem:est-diff}
  If we take for each term $R_k(\alpha_2)$ of the development \eqref{eq:17} the top order term $r_ke^{a_k\zeta \alpha_2}$, then we obtain the same development as in \eqref{eq:18}, that is (formally)
  \[ \exp \sum_{k\ge1} c_k e^{-a_k\alpha_1} = 1+\sum_{k\geq1} r_k e^{-a_k\alpha_1}. \]
  In particular, the difference $\rhodeux-\rhoun$ has a formal development
  \[ \rhodeux - \rhoun \sim e^{K_1}e^{a_0\ta_1} \sum_{k\geq1} e^{-a_k\alpha_1} g_k(\alpha_2),\]
  with each $g_k(\alpha_2)=O(e^{-2\alpha_2})$. 
\end{lem}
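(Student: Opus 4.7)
The plan is as follows. First I rewrite $\rhodeux$ in the same framework as \eqref{eq:17}: using the defining relation $a_0\ta_1 = b_1\alpha_1 + b\ta_2$, one has $b\ta_2 = a_0\ta_1 - b_1\alpha_1$, so by \eqref{eq:18},
\[
\rhodeux = e^{b\ta_2+\psi(\alpha_1)} = e^{K_1}e^{a_0\ta_1}\exp\Bigl(\sum_{k\ge 1} c_k e^{-a_k\alpha_1}\Bigr) \sim e^{K_1}e^{a_0\ta_1}\Bigl(1 + \sum_{k\ge 1} \tilde c_k e^{-a_k\alpha_1}\Bigr),
\]
where the $\tilde c_k$ are the polynomial combinations of $c_1,\ldots,c_k$ coming from the exponential series. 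Next I replace each $R_k(\alpha_2)$ in \eqref{eq:17} by its leading term $r_k e^{a_k\zeta\alpha_2}$ from item (3) of Proposition \ref{prop:refin-model-near-1}; since $\ta_1 - \zeta\alpha_2 = \alpha_1$, this gives
\[
\rhoun \sim e^{K_1}e^{a_0\ta_1}\Bigl(1 + \sum_{k\ge 1} r_k e^{-a_k\alpha_1}\Bigr) + e^{K_1}e^{a_0\ta_1}\sum_{k\ge 1} e^{-a_k\alpha_1} O(e^{-2\alpha_2}).
\]
Both assertions of the lemma then reduce to the identity $\tilde c_k = r_k$ for every $k\ge 1$, since the asserted expansion of the difference follows with $g_k(\alpha_2) = e^{-a_k\zeta\alpha_2}R_k(\alpha_2) - \tilde c_k = (r_k - \tilde c_k) + O(e^{-2\alpha_2})$.

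To prove $\tilde c_k = r_k$ I would argue by induction on $k$. The base case is immediate: $\tilde c_1 = c_1 = K_2$ from \eqref{eq:18}, while $r_1 = K_2$ is the leading coefficient of $w = R_1$ in \eqref{eq:15}, and these coincide by the matching of $K_2$ and $a_1$ in the two expansions already noted in the paragraph below \eqref{eq:15}. For the inductive step, I consider the formal toric ansatz
\[
\rho^{\mathrm{tor}}(\beta) := e^{K_1}e^{a_0\ta_1}\Bigl(1 + \sum_{k\ge 1}\beta_k e^{-a_k\alpha_1}\Bigr)
\]
and expand $\opP(\rho^{\mathrm{tor}}(\beta))$ as a formal power series in $e^{-\alpha_1}$. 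At each order $e^{-a_k\alpha_1}$, this produces an affine equation for $\beta_k$ with remaining data polynomial in $\beta_1,\ldots,\beta_{k-1}$, and whose coefficient of $\beta_k$ is non-zero (the critical point, discussed below). By \eqref{eq:21} applied to $\rhodeux$, the sequence $\tilde c_k$ satisfies this recursion; on the other hand, extracting the leading-in-$\alpha_2$ part of the equations $\Delta_2 f = g$ solved iteratively in Proposition \ref{prop:refin-model-near-1} yields exactly the same recursion for the $r_k$. Uniqueness with the common initialization $\beta_0 = 1$ then forces $\tilde c_k = r_k$.

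The main obstacle is verifying non-vanishing of the coefficient of $\beta_k$ at every order of the toric recursion, i.e.\ that no $a_k \in a_1\mathbb{N} + 2\mathbb{N}$ coincides with a critical weight of the toric linearization. This should follow from the same weighted-analysis arguments as in Lemma \ref{lem:L-iso} and the discussion following \eqref{eq:32} in the proof of Proposition \ref{prop:refin-model-near-1}. In a potentially resonant case one would enrich the ansatz with a logarithmic correction, but the identity $\tilde c_k = r_k$ should persist at the level of top-order coefficients since both expansions receive the same modification.
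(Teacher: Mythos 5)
Your proposal follows essentially the same route as the paper: rewrite $\rhodeux$ in the $(\ta_1,\alpha_2)$ coordinates via $\alpha_1=\ta_1-\zeta\alpha_2$, observe that both this and the top-order part $\varrho^{(1)}_{\mathrm{top}}$ of $\rhoun$ are formal solutions of $\opP(\cdot)=O(e^{-2\alpha_2})$ in powers of $e^{-\alpha_1}$, and conclude by uniqueness of the formal development. The ``main obstacle'' you flag is handled in the paper without any logarithmic enrichment: at each stage of the inductive construction the only ambiguity in solving $\Delta_2 f=g$ (equation~\eqref{eq:32}) is an additive constant, whereas the top-order coefficient $B$ in \eqref{eq:33} multiplies the blowing-up function $e^{2a_1\zeta\alpha_2}$ and is therefore determined uniquely by the preceding top-order data, so no resonance occurs.
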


The lemma means that each term $e^{b\ta_2-a_k\alpha_1}$ of the development of $\rhodeux$ glues well with the terms of $\rhoun$: one can actually interpret the construction of $\rhoun$ as an extension along $D_1$ of each term of this asymptotic term, so that one obtains an asymptotic solution along $D_1$ at any order.

\begin{proof}
  We can rewrite $\varrho^{(2)}$ in terms of the coordinates $(\tilde\alpha_1,\alpha_2)$ used to construct $\varrho^{(1)}$: since $\tilde\alpha_1=\alpha_1+\zeta\alpha_2$,
  \[ \varrho^{(2)}=\exp\big(a_0\tilde\alpha_1+K_1+\sum_{k\geq1}c_ke^{-a_k(\tilde\alpha_1-\zeta\alpha_2)}\big).\]
  This is by \eqref{eq:21} a formal solution of the equation
  \begin{equation}
 \opP(\varrho^{(2)}) = O(e^{-2\alpha_2}).\label{eq:31}
 \end{equation}
We then just need to check that the top order terms of $\varrho^{(1)}$, that is
\[ \varrho^{(1)}_{top} := e^{K_1+a_0\tilde\alpha_1} \sum_{k\geq1} r_k e^{-a_k(\tilde\alpha_1-\zeta\alpha_2)} \]
also satisfy \eqref{eq:31}, and that the formal solution of \eqref{eq:31} in powers of $e^{\alpha_1}=e^{\tilde\alpha_1-\zeta\alpha_2}$ is unique.

Note \[ \tau=\varrho^{(1)}-\varrho^{(1)}_{top}=e^{K_1+a_0\tilde\alpha_1}\sum_{k\geq1}e^{-a_k\alpha_1}O(e^{-2\alpha_2}), \] 
then it is clear that the contribution of $\tau$ in $\opP(\varrho^{(1)})$ is $O(e^{-2\alpha_2})$, that is
\[ \opP(\varrho^{(1)}) = \opP(\varrho^{(1)}_{top}) + O(e^{-2\alpha_2}). \]
It follows that $\varrho^{(1)}_{top}$ is also a formal solution of \eqref{eq:31}. The uniqueness can be obtained by specializing the construction of the formal development in the proof of Proposition~\ref{prop:refin-model-near-1} to the top order terms and checking that at each step the top order term is uniquely determined: this is true because when we solve \eqref{eq:32} the ambiguity is a constant but the top order term blows up \eqref{eq:33} and is completely determined by the previous top order terms. 
\end{proof}

This now enables to glue together the potentials $\rhodeux$ and $\rhoun$ along a ray $\alpha_1=\eta \alpha_2$ in the following way. We truncate $\rhoun$ to some order $k$ into $\rhoun_k$. We choose a smooth nondecreasing function $\chi$ on $\Bbb{R}$ such that $\chi(t)=0$ if $t\leq0$ and $\chi(t)=1$ if $t\geq1$, and define
\begin{equation}
 \varrho = \chi(\alpha_1-\eta \alpha_2) \rhoun_k + (1-\chi(\alpha_1-\eta \alpha_2)) \rhodeux.\label{eq:19}
\end{equation}
On the transition region $0\leq\alpha_1-\eta \alpha_2\leq1$, we write $\varrho=\rhoun_k+(1-\chi(\alpha_1-\eta\alpha_2))(\rhodeux-\rhoun_k)$. By the lemma, and using the fact that $\chi(\alpha_1-\eta\alpha_2)$ and all its derivatives are bounded, one obtains that, still on the transition region, the linearization $L$ calculated in \eqref{eq:10} satisfies
\[ L(\varrho-\rhoun_k) = O(e^{-2\alpha_2}+e^{(a_1-a_{k+1}) \alpha_1}),  \]
where again the $O(\cdot)$ means a function such that all derivatives with respect to $\alpha_1$ or $\alpha_2$ satisfy the same estimate. The nonlinear terms are even smaller, so we finally get on the transition region
\begin{equation}
 \opP(\varrho) = O(e^{-2\alpha_2}+e^{(a_1-a_{k+1}) \alpha_1}).\label{eq:20}
\end{equation}

\begin{prop}\label{prop:est-P}
  Take $\eta<\zeta(2/b-1)$ and $k$ large enough so that $a_k>a_0(1+\zeta/\eta)$. Then, for $(\alpha_1,\alpha_2)$ outside a large compact set, we have for all $\ell$
  \[ |\nabla^\ell \opP(\varrho)| \leq C_\ell e^{-(1+\varepsilon)\beta}, \quad \beta=b\ta_2+\psi(\alpha_1). \]
\end{prop}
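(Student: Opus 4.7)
The plan is to split the domain into the three regions determined by the cutoff $\chi(\alpha_1-\eta\alpha_2)$ and to verify the estimate separately on each by converting the corresponding exponential bound already at our disposal into a bound of the required form $e^{-(1+\varepsilon)\beta}$. The bridge between these two forms is the identity
\[ \beta = b\ta_2 + \psi(\alpha_1) = a_0\ta_1 + K_1 + O(e^{-a_1\alpha_1}), \]
which I would derive at the start from the defining relation $a_0\ta_1=b_1\alpha_1+b\ta_2$ (using $\ta_2=\alpha_2+\zeta_2\alpha_1$ with $\zeta_2=-\langle\alpha_1,\alpha_2\rangle/\langle\alpha_1,\alpha_1\rangle$): comparing coefficients gives $a_0=b_1+b\zeta_2$ and $a_0\zeta=b$, and the asymptotics of $\psi$ from Proposition~\ref{prop:refin-model-near-1} furnishes the error term. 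In particular $\beta = a_0\alpha_1 + a_0\zeta\alpha_2 + O(1)$ outside a compact set.

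On the region $\chi\equiv 1$, where $\alpha_1\geq \eta\alpha_2+1$ and $\varrho=\rhoun_k$, estimate \eqref{eq:23} gives $|\opP(\varrho)|\leq C_k e^{-a_k\alpha_1}$. Since $\alpha_2\leq \alpha_1/\eta$, one has $\beta\leq a_0(1+\zeta/\eta)\alpha_1+O(1)$, and the hypothesis $a_k>a_0(1+\zeta/\eta)$ yields $(1+\varepsilon)\beta\leq a_k\alpha_1+O(1)$ for $\varepsilon$ sufficiently small. On the region $\chi\equiv 0$, where $\alpha_1\leq\eta\alpha_2$ and $\varrho=\rhodeux$, estimate \eqref{eq:21} gives $|\opP(\varrho)|\leq c_0 e^{-2\alpha_2}$, while the assumption $\eta<\zeta(2/b-1)$ combined with $a_0\zeta=b$ implies $a_0(\eta+\zeta)<2$; hence $(1+\varepsilon)\beta\leq (1+\varepsilon)a_0(\eta+\zeta)\alpha_2+O(1)\leq 2\alpha_2+O(1)$. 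On the transition band $0\leq\alpha_1-\eta\alpha_2\leq 1$, estimate \eqref{eq:20} splits $|\opP(\varrho)|$ as the sum of the same two exponentials; the first is treated as in the $\chi\equiv 0$ region, and the second as in the $\chi\equiv 1$ region provided we also require $a_{k+1}-a_1>a_0(1+\zeta/\eta)$, which is harmless since $a_k\to\infty$ and we are free to increase $k$.

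For the derivative estimate, the same argument applies: each of \eqref{eq:21}, \eqref{eq:23}, \eqref{eq:20} is stated for arbitrary $\ell$, and on the transition strip the derivatives of $\chi(\alpha_1-\eta\alpha_2)$ are uniformly bounded of every order, so the linearization identity used to derive \eqref{eq:20} propagates to $\nabla^\ell\opP(\varrho)$ without loss. Taking $\varepsilon$ to be the minimum of the (positive) gains produced in the three cases gives the proposition.

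The step I expect to be most delicate is not any single inequality but the bookkeeping of the two constraints: one must choose $\eta$ strictly below $\zeta(2/b-1)$ to beat $e^{-2\alpha_2}$ against $\beta$, then choose $k$ large enough (depending on $\eta$) so that $a_k$ and $a_{k+1}-a_1$ both exceed $a_0(1+\zeta/\eta)$, and finally verify that a common $\varepsilon>0$ can be extracted uniformly outside a compact set; the fact that $b<2$ in all the cases listed in Section~\ref{sec:summary-constants} is what makes the required $\eta>0$ exist in the first place.
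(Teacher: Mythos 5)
Your proof is correct and takes essentially the same route as the paper: both arguments split the domain at $\alpha_1 = \eta\alpha_2$, use $\beta\sim a_0\ta_1$ to convert the two decays $e^{-2\alpha_2}$ and $e^{-a_k\alpha_1}$ (or $e^{(a_1-a_{k+1})\alpha_1}$) into bounds by $e^{-(1+\varepsilon)\beta}$, and check that the hypotheses on $\eta$ and $k$ make both comparisons work. The only extra content in your version is the explicit derivation of the relations $a_0=b_1+b\zeta_2$ and $a_0\zeta=b$, which the paper leaves implicit.
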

\begin{proof}
  The idea of the proof is simple: near $D_2$ (that is, when $\alpha_2\rightarrow\infty$ while $\alpha_1$ remains bounded) we already have such a control, see \eqref{eq:22}, and therefore the control persists up to the gluing region $0<\alpha_1-\eta \alpha_2<1$ provided that $\eta$ is small enough. On the contrary, if $\eta$ is small then we need a high order control in powers of $e^{-\alpha_1}$ near $D_1$ in order to control up to the transition region: this is provided by Proposition~\ref{prop:refin-model-near-1}.

  More precisely, observe that when $\alpha_1\rightarrow\infty$ one has $\beta= a_0\ta_1+O(1)$. Then:
  \begin{itemize}
  \item on the region $\alpha_1\leq \eta\alpha_2$ then $a_0\ta_1\leq b(\eta/\zeta+1)\alpha_2$ so $e^{-2\alpha_2}=O(e^{-(1+\varepsilon)\beta})$ on this region if $\eta<\zeta(2/b-1)$;
  \item on the region $\alpha_1\geq \eta\alpha_2$ then $a_0\ta_1\leq a_0(1+\zeta/\eta) \alpha_1$ so $e^{(a_1-a_{k+1})\alpha_1}=O(e^{-(1+\varepsilon)\beta})$ on this region if $a_{k+1}-a_1>a_0(1+\zeta/\eta)$.
\end{itemize}
Given the controls \eqref{eq:23} and \eqref{eq:22} near $D_1$ and $D_2$, and the control \eqref{eq:20} in the transition region, the proposition follows.
\end{proof}

We will now modify slightly this function obtained by gluing to make it a well defined $W$-invariant 
smooth and strictly convex function, thus corresponding to a Kähler metric on $G/H$. 
Recall that $\varrho$ coincides with $\rhodeux$ in the region defined by $\alpha_1\leq \eta\alpha_2$ 
and that $\rhodeux$ is invariant under the reflection defined by $\alpha_1$ since $\ta_2$ is 
orthogonal to $\alpha_1$ and $\psi$ is even. Similarly, on the region defined by 
$\alpha_1\geq \eta\alpha_2+1$, $\varrho$ coincides with $\rhoun_k$ which is invariant 
under the reflection with respect to $\alpha_2$. 
From this we deduce that the $W$-invariant function, still denoted by $\varrho$, 
whose restriction to the positive Weyl chamber is $\varrho$, is smooth outside of a 
large enough compact set. 

Let us now show that $\varrho$ is strictly convex outside of a large enough compact set. 
Note that $\rhodeux$ is strictly convex by construction. 
We restrict to a region of the form $\{\alpha_1\geq \epsilon \alpha_2\geq 0\}$ for some $\epsilon>0$. 
In restriction to such a region, we have $e^{a_0\ta_1-a_k\alpha_1}=o(e^{a_0\ta_1-a_{k-1}\alpha_1})=o(e^{a_0\ta_1})$ 
at infinity, for $k\geq 2$.
For simplicity, we identify $\chi$ with the composition $\chi(\alpha_1-\eta \alpha_2)$, and compute 
\[ d^2\varrho=d^2\rhoun_k +(1-\chi)(d^2\rhodeux-d^2\rhoun_k)-2d\chi d(\rhodeux-\rhoun_k)-(\rhodeux-\rhoun_k)d^2\chi \]
We have at least $\rhodeux-\rhoun_k=O(e^{a_0\ta_1-a_2\alpha_1})$, 
and the derivatives of $\chi$ are bounded, hence the two last terms above are of this order. 
On the other hand, $d^2\rhoun_k +(1-\chi)(d^2\rhodeux-d^2\rhoun_k)$ is 
\begin{multline*}
e^{K_1}e^{a_0\ta_1}\big( (a_0^2+O(e^{-a_1\alpha_1}))\ta_1^2 + O(e^{-a_1\alpha_1})(\ta_1\alpha_2+\alpha_2\ta_1) 
\\ +e^{-a_1\ta_1}(\chi w''(\alpha_2)+(1-\chi)K_2a_1^2\zeta^2e^{a_1\zeta\alpha_2})\alpha_2^2 \big).
\end{multline*}
We may now conclude, in view of the properties of $w$ (which is strictly convex and such that 
$w(\alpha_2)=K_2e^{a_1\zeta\alpha_2}(1+O(e^{-2\alpha_2})$), that the dominant term of $\det(d^2\varrho)$ 
at infinity is strictly positive. 
Furthermore, the dominant term for the matrix itself is $e^{K_1}e^{a_0\ta_1}a_0^2\ta_1^2$, which is 
semi-positive, hence we may find a compact set outside of which the function $\varrho$ is strictly convex.  

We finally glue in an arbitrary smooth, $W$-invariant, strictly convex function on the compact 
set where $\varrho$ is not well-behaved as follows.
Let $M\in \mathbb{R}$ and consider the function 
\begin{equation*}
\varrho_{\mathrm{int}}:=M+\ln\sum_{w\in W}e^{w\cdot\alpha_1}.
\end{equation*}
It is a smooth, $W$-invariant and strictly convex function on $\mathfrak{a}$, 
and we may assume, by choosing $M$ large enough, that 
$\varrho_{\mathrm{int}}\geq \varrho$ 
on the compact set where it is not well-behaved.
Now consider the function defined by $\sup(\varrho_{\mathrm{int}},\varrho)$.
It is a convex function, smooth and strictly convex outside of the set where 
$\varrho_{\mathrm{int}}$ and $\varrho$ coincide, which is compact by comparison 
of the growth rates.
We finally choose an approximation of this supremum which is 
smooth, strictly convex, and equal to $\varrho$ outside of a compact set containing 
the contact set of $\varrho$ and $\varrho_{\mathrm{int}}$. 
This is possible using for example \cite{Gho02}. 
This final function provides the desired asymptotic solution, and we still denote 
it by $\varrho$ in the following. 

\section{Solution to the Kähler-Ricci flat equation}
\label{sec:solut-kahl-ricci}

\subsection{The asymptotic metric}

Let $(l_1,l_2)$ denote the basis of $\fa$ which is dual to the basis of restricted roots $(\alpha_1,\alpha_2)$.
We use the notation $\Roots_s$ to denote the roots of $G$ which are not stable under $\sigma$, 
and let $\ha_r=\ha-\sigma(\ha)$ denote the restricted root associated to $\ha\in\Roots_s$. 
Recall that with this convention, $\ha_r|_{\fa}=2\ha|_{\fa}$.
For each $\ha\in  \Roots^+$ denote $\mu_{\ha}=e_{\ha}+\sigma(e_{\ha})$, where $(\ha^\vee,e_{\ha},e_{-\ha}=-\theta(e_{\ha}))$ is a $sl_2$-triple. 
(Here $\ha^\vee$ is defined by $\ha^\vee=\frac{2H_{\ha}}{|\ha|^2}$).

Then we can parametrize the symmetric space by
\begin{equation}
(z_1,z_2,(z_{\ha})_{\ha\in \Roots_s^+}) \mapsto \exp( \sum_{\ha\in \Roots_s^+} z_{\ha} \mu_{\ha} ) \exp(z_1 l_1 + z_2 l_2) H\label{eq:24}
\end{equation}
which is a local biholomorphism when $\Re(z_1) l_1+\Re(z_2)l_2$ belongs to the regular part of $\fa$.

Using the forms $\omega_{a\bar b}=\frac i2 dz_a\wedge d\bar z_b$ for $a,b\in \{1,2\}$, and $\omega_{\ha\bar \ha}=\frac i2 dz_{\ha}\wedge d\bar z_{\ha}$, then a $K$-invariant Kähler potential is given by a function $\varrho(x_1,x_2)$ on $\fa$, and it follows from \cite[Corollary 2.11]{DelHoro} that the Kähler form on the symmetric space $G/H$ is given along the regular part of $A=\exp\fa$ by
\begin{equation}
 \sum_{a,b\in\{1,2\}} d^2\varrho(l_a,l_b) \omega_{a\bar b}
                + 2 \sum_{\ha\in \Roots_s^+} \tanh(\ha) \frac{\langle d\varrho, \ha_r\rangle}{|\ha|^2} \omega_{\ha\bar\ha}.\label{eq:25}
\end{equation}
The parametrization \eqref{eq:24} is slightly different from that in \cite{DelHoro} which explains that the formula is not exactly the same: in \eqref{eq:24} we choose the coordinates $(z_\alpha)$ given by the group action on $e^{z_1l_1+z_2l_2}H\in A$; this choice still makes sense on the compactification (when $z_1$ or $z_2$ go to infinity), so our formulas will be meaningful also on $\bar{A}\cap D_1$ and $\bar{A}\cap D_2$. 

Note that with this normalization, the restriction to $A$ of the metric $g$ corresponding to the the Kähler form \eqref{eq:25} is given in coordinates $(x_1,x_2)$ by
\begin{equation}
  g|_A = \Hess \varrho. 
\end{equation}

With these formulas at hand, we can now give the asymptotic behavior of the metric at infinity. We define as in 
Section~\ref{sec:appr-solut-near} the function
\[ \beta = b\ta_2+\psi(\alpha_1). \]
Then near $D_2$, that is when $\alpha_2\rightarrow\infty$, the potential $e^\beta$ leads to a metric
\begin{equation}
  \label{eq:27}
  g_2 = e^\beta \Big( |d\beta|^2 + \psi''(\alpha_1) |\alpha_1|^2 + \sum_{\ha\in \Roots_s^+} \frac2{|\ha|^2} \tanh(\ha) \langle d\beta,\ha_r\rangle |dz_{\ha}|^2 \Big).
\end{equation}
Since $d\beta=b\ta_2+\psi'(\alpha_1)\alpha_1$ and $\psi'(\alpha_1)>0$, we have $\langle d\beta,\ha_r\rangle>0$ for all $\ha_r\in \Roots_r^+$ and all values of $\alpha_1$. Therefore, the formula \eqref{eq:27} is an asymptotically conical metric with radius $r=2e^{\beta/2}$ when we approach $D_2$, that is when $\alpha_2\rightarrow\infty$ while $\alpha_1$ remains bounded.

We now pass to the behavior near $D_1$ of the metric given by the principal term of the potential, $\rhoun_1=\exp(a_0\ta_1+e^{-a_1\ta_1} w(\alpha_2))$. The same calculation now gives
\[\begin{split}
  g_1 = & \rhoun_1 \Big( \big|(a_0-a_1e^{-a_1\ta_1}w)\ta_1+e^{-a_1\ta_1} w'\alpha_2\big|^2 \\ & + a_1^2e^{-a_1\ta_1} w |\ta_1|^2 + e^{-a_1\ta_1}w'' |\alpha_2|^2  \\
  & + \sum_{\ha\in \Roots_s^+} \frac2{|\ha|^2} \tanh(\ha) \big\langle(a_0-a_1e^{-a_1\ta_1}w)\ta_1+e^{-a_1\ta_1}w'\alpha_2,\ha_r\big\rangle |dz_{\ha}|^2 \Big).
\end{split}\]
Splitting the sum into roots such that $\ha_r$ is a multiple of $\alpha_2$ and other roots, we write the principal part as
\begin{multline}\label{eq:29}
g_{\mo} = \rhoun_1 \Big( a_0^2|\ta_1|^2 + 2a_0 \sum_{\alpha_2 \nmid \ha_r} \tanh(\ha) \tfrac{\langle\ta_1,\ha_r\rangle}{|\ha|^2} |dz_{\ha}|^2 \\
+ e^{-a_1\ta_1} \big( w'' |\alpha_2|^2 + 2w' \sum_{\alpha_2 \mid \ha_r} \tanh(\ha) \frac{\langle\alpha_2,\ha_r\rangle}{|\ha|^2} |dz_{\ha}|^2 \big) \Big).
\end{multline}
Then, using that $w(\alpha_2)=O(e^{a_1\zeta\alpha_2})$ when $\alpha_2\rightarrow\infty$ and therefore $e^{-a_1\ta_1}w(\alpha_2)=O(e^{a_1\alpha_1})$, with the same for the derivatives with respect to $\alpha_2$, we obtain 
\begin{equation}
  | g_1 - g_{\mo} |_{g_{\mo}} = O(e^{-a_1\alpha_1}).
\end{equation}
Therefore the equation \eqref{eq:29} gives the asymptotics of $g_1$ when $\alpha_1\rightarrow\infty$.

The metric $g_a$ is not exactly asymptotically conical since $g_a/\rhoun_1$ collapses along the directions given by the action of $H_{\alpha_2}$ and the $\mu_{\ha}$, when $\alpha_2\mid\ha_r$, that is along the directions of the fibers of the fibration $D_1\rightarrow G/P_1$, which are isomorphic to the symmetric space $X_1$; and the metric \[ w'' |\alpha_2|^2 + 4w' \sum_{\alpha=k\alpha_2\in R^+} k |dz_\alpha|^2 \] is the asymptotically conical Kähler Ricci flat metric on $X_1$.

Of course it is important to note that on the regular part of the Weyl chamber, the formulas \eqref{eq:27} and \eqref{eq:29} give the same asymptotic behavior, since then $\beta= a_0\ta_1+K_1+K_2e^{-a_1\alpha_1}+O(e^{-2a_1\alpha_1})$ and the asymptotics of $\psi''(\alpha_1)$ and $e^{-a_1\ta_1}w''(\alpha_2)\sim a_1^2e^{-a_1\alpha_1}$ match, so we again obtain
\[ |g_2-g_{\mo}|_{g_{\mo}} = O(e^{-a_1\alpha_1}). \]
Our definitive initial metric $g_0$ derives from the potential $\varrho$ obtained by gluing the potential $\rhodeux=e^\beta$ with the potential $\rhoun_k$ for some large $k$ as described in §~\ref{sec:approximate-solution}. Of course $g_0$ is also asymptotic to $g_{\mo}$ when $\alpha_1$ goes to infinity:
\[ |g_0-g_{\mo}|_{g_{\mo}} = O(e^{-a_1\alpha_1}). \]

If we now replace $\rhoun_1$ by the potential $\rhoun_k$ from Proposition~\ref{prop:refin-model-near-1}, leading to the potential given by \eqref{eq:19}, then we of course get a higher order coincidence between $dd^C\rhoun_k$ and $dd^C\rhodeux$: more precisely, from Lemma~\ref{lem:est-diff} we have the estimate (also true for the derivatives):
\[ \rhodeux - \rhoun_k = O\big(e^{a_0\ta_1} (e^{-a_1\alpha_1-2\alpha_2}+e^{-a_{k+1}\alpha_1}) \big), \]
we obtain
\[ |dd^C\rhodeux - dd^C\rhoun_k|_{g_{\mo}} = O\big( e^{-2\alpha_2} + e^{(a_1-a_{k+1})\alpha_1} \big), \]
and more generally
\[ |\nabla^\ell(dd^C\rhodeux - dd^C\rhoun_k)|_{g_{\mo}} = O\big( e^{\frac\ell2 \alpha_1}(e^{-2\alpha_2} + e^{(a_1-a_{k+1})\alpha_1}) \big). \]

The Ricci form is given by
\[ \Ric = - \frac12 dd^C \opP(\varrho). \]
From Proposition~\ref{prop:est-P}, we obtain:
\begin{prop}\label{prop:est-Ric}
  Given any integer $\ell_0$, if the coefficient $\eta$ defining the transition region is small enough and $k$ is large enough, then for the metric $g_0$ coming from the potential $\varrho$ given by \eqref{eq:19}, one has for all $\ell\leq\ell_0$
  \[ |\nabla^\ell \opP(\varrho)|_{g_0} \leq C_\ell e^{(-1-\varepsilon-\frac\ell2)\beta}. \]
In particular, for $\ell\leq \ell_0-2$, one has
\[ |\nabla^\ell\Ric|_{g_0} \leq C_\ell e^{(-2-\varepsilon-\frac\ell2)\beta}. \]
\end{prop}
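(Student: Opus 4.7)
The plan is to deduce both estimates from Proposition~\ref{prop:est-P}, whose bounds are expressed in coordinate derivatives, by passing to the geometric norm of $g_0$. First, the second estimate follows immediately from the first once it is established: since $\Ric = -\tfrac12 dd^C\opP(\varrho)$ and $dd^C$ is a second order differential operator, applying the first estimate at order $\ell+2$ yields $|\nabla^\ell\Ric|_{g_0} \leq C\,|\nabla^{\ell+2}\opP(\varrho)|_{g_0} \leq C_\ell e^{(-1-\varepsilon-(\ell+2)/2)\beta} = C_\ell e^{(-2-\varepsilon-\ell/2)\beta}$. So it remains to establish the first estimate.

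The first estimate rests on the observation that, from the explicit asymptotic formulas \eqref{eq:27} and \eqref{eq:29}, the metric factors schematically as $g_0 \approx e^\beta\,\tilde g$, where $\tilde g$ is a reference metric of bounded geometry in the coordinates $(x_1,x_2,z_{\ha})$ of \eqref{eq:24}. Near $D_2$, $\tilde g$ is the flat cone model with radius $r=2e^{\beta/2}$; near $D_1$, $\tilde g$ is asymptotic to the product of the cone metric in the $\ta_1$-direction with the Stenzel Ricci flat metric on the symmetric space fiber, both of which have bounded covariant derivatives of curvature at the appropriate scale. From $g_0 \sim e^\beta \tilde g$ the dual metric rescales by $e^{-\beta}$, so one $g_0$-covariant derivative of a scalar gains a factor $e^{-\beta/2}$ compared to a $\tilde g$-derivative. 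By induction on $\ell$, using that the Christoffel correction $\nabla - \tilde\nabla$ is built from $d\beta$ (whose $\tilde g$-norm is bounded at infinity), one obtains the schematic inequality $|\nabla^\ell f|_{g_0} \leq C_\ell e^{-\ell\beta/2}\sum_{\ell'\leq\ell}|\tilde\nabla^{\ell'}f|_{\tilde g}$ for scalars $f$. Substituting $f = \opP(\varrho)$ and invoking the coordinate-norm bound of Proposition~\ref{prop:est-P} produces the required exponent $-1-\varepsilon-\ell/2$.

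The main obstacle is verifying that $\tilde g$ genuinely has bounded geometry to order $\ell_0$ uniformly in a neighborhood of infinity, across both the fiber collapse near $D_1$ and the gluing region $0 \leq \alpha_1 - \eta\alpha_2 \leq 1$. Near $D_2$ this is immediate from the asymptotically conical structure. Near $D_1$, the relevant point is that $\ta_1$ is a genuine conical direction at rate $a_0$, while along the fibers one has the Stenzel Ricci flat metric, which is smooth with bounded covariant derivatives at the correct scale. In the transition region the gluing \eqref{eq:19} uses a smooth cutoff $\chi$, and Lemma~\ref{lem:est-diff} together with the development of Proposition~\ref{prop:refin-model-near-1} controls $\rhodeux - \rhoun_k$ and all its derivatives at a rate governed by $\eta$ and $k$. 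Choosing $\eta$ small and $k$ large, exactly as in the hypothesis of Proposition~\ref{prop:est-P}, ensures the transition does not spoil the bounded geometry of $\tilde g$ up to order $\ell_0$, so the argument above closes and delivers the claimed bound $|\nabla^\ell\opP(\varrho)|_{g_0} \leq C_\ell e^{(-1-\varepsilon-\ell/2)\beta}$.
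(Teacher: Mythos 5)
Your deduction of the Ricci estimate from the first estimate (via $\Ric = -\tfrac12 dd^C\opP(\varrho)$ and a shift of two derivatives) is fine, and matches the paper. But the first estimate has a genuine gap, and it is exactly the point the paper singles out as the obstacle.

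Your argument hinges on the claim that $\tilde g := g_0/e^\beta$ has bounded geometry in the coordinates $(x_1,x_2,z_{\ha})$ of \eqref{eq:24}, so that the schematic inequality
\[
|\nabla^\ell f|_{g_0} \;\leq\; C_\ell\, e^{-\ell\beta/2}\sum_{\ell'\leq\ell}|\tilde\nabla^{\ell'}f|_{\tilde g}
\]
can be fed into the coordinate bound of Proposition~\ref{prop:est-P}. That claim is correct near $D_2$ (from \eqref{eq:27} the rescaled metric is uniformly equivalent to a fixed coordinate metric there), but it \emph{fails} near $D_1$: from \eqref{eq:29} the components of $\tilde g$ along $\alpha_2$ and the fiber directions $z_{\ha}$ with $\alpha_2\mid\ha_r$ carry a prefactor $e^{-a_1\ta_1}$, which tends to $0$ as $\ta_1\to\infty$. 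The rescaled metric therefore \emph{collapses} along the fibers of $D_1\to G/P_1$, and $\tilde g^{-1}$ blows up in those directions. Consequently, a derivative of a scalar in the $H_{\alpha_2}$ or fiber direction does not simply gain $e^{-\beta/2}$: it also loses a factor of $e^{\frac{a_1}{2}\alpha_1}$ (the fiber length scale of $g_0$ is $e^{\beta/2}e^{-\frac{a_1}{2}\alpha_1}$, not $e^{\beta/2}$). Your phrase ``at the correct scale'' does not resolve this; the Stenzel model appears in $g_0$ with a $\ta_1$-dependent rescaling, not at a fixed scale, so ``bounded geometry'' in the coordinates is precisely what does not hold, and your schematic inequality stops being usable together with coordinate bounds once one enters the region $\alpha_1\to\infty$, which includes the transition zone $0\leq\alpha_1-\eta\alpha_2\leq1$.

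The paper explicitly flags this: the proposition ``is not an immediate consequence of Proposition~\ref{prop:est-P}'' because of the extra weight $e^{\frac{a_1\alpha_1}{2}}$ per derivative in the collapsed directions. The fix is quantitative and not the one you invoke. You write ``choosing $\eta$ small and $k$ large, exactly as in the hypothesis of Proposition~\ref{prop:est-P},'' but the $k$ of Proposition~\ref{prop:est-P} only needs $a_k>a_0(1+\zeta/\eta)$, whereas here the loss of $e^{\frac{\ell}{2}(\cdot)\alpha_1}$ accumulated over $\ell\leq\ell_0$ fiber derivatives must be absorbed by taking $k$ large \emph{as a function of $\ell_0$}, so that $\opP(\rhoun_k)=O(e^{-a_k\alpha_1})$ decays fast enough in $e^{-\alpha_1}$ to dominate the loss. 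This extra dependence of $k$ on $\ell_0$ is exactly what appears in the statement and proof of the proposition, and it is missing from your argument.
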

\begin{proof}
  The proof is similar to that of Proposition~\ref{prop:est-P}, the difference being that we now calculate the derivatives with respect to the metric $g_0$, hence the weight $e^{-\frac \beta2}$ for each derivative, and additionally $e^{\frac{a_1\alpha_1}2}$ for derivatives in the direction of $H_{\alpha_2}$ when we go to $D_1$. Because of this last weight, the proposition is not an immediate consequence of Proposition~\ref{prop:est-P}, but the scheme of proof is the same : we check what happens in the various regions.
  \begin{itemize}
  \item In the direction of $D_2$ ($\alpha_2\rightarrow\infty$, $\alpha_1$ bounded), we have $\opP(\varrho)=O(e^{-2\alpha_2})$ (with the same estimates for the derivatives), and therefore, given the geometry of the metric,
    \[ |\nabla^\ell \opP(\varrho)|_{g_0} = O(e^{-2\alpha_2-\frac \ell2 \beta}) . \]
  \item In the direction of $D_1$ ($\alpha_1\rightarrow\infty$, this includes the transition region), we have $\opP(\rhoun_k)=O(e^{-a_k\alpha_1})$; here, because of the geometry of the metric, each derivative in the $H_{\alpha_2}$ direction comes with a weight $e^{\frac{a_1}2\alpha_1}$, and therefore
    \[ |\nabla^\ell \opP(\rhoun_k)|_{g_0} = O(e^{(-a_k+\frac\ell2)\alpha_1-\frac \ell2 \beta}). \]
  (Recall $\beta\sim a_0\ta_1$ in this direction).
\end{itemize}
If $\ell_0$ is given, we can take $k$ large enough in order to have $-a_k+\frac{\ell_0}2$ as negative as we want, and we then proceed as in the proof of Proposition~\ref{prop:est-P}.
\end{proof}
It is clear from the proof that it is impossible to control all the derivatives of the Ricci tensor when one goes to $D_1$, because of the collapsed directions. This is usually remedied in the literature by using weighted spaces with two weights, one of the weights taking care of the collapsed directions. We will use another approach and just state the bounds in the proposition in order to control the geometry at infinity of $g_0$.

\subsection{The Ricci flat Kähler metric}

\begin{lem}\label{lem:atlas}
  Fix $\ell_0$ and then $g_0$ as in Proposition~\ref{prop:est-Ric}. If $a_1\leq a_0$ then the injectivity radius of $g_0$ is bounded below, and $g_0$ admits a $C^{\ell_0-1,\alpha}$ atlas; in particular  the curvature of $g_0$ is bounded in $C^{\ell_0-3,\alpha}$.
\end{lem}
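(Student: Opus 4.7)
The plan is to build an atlas of coordinate charts on a neighborhood of infinity in which the components of $g_0$ are uniformly bounded in $C^{\ell_0-1,\alpha}$. From such an atlas both conclusions follow: the injectivity radius bound is immediate since each chart covers a metric ball of definite radius, and the $C^{\ell_0-3,\alpha}$ bound on the curvature is obtained by taking two derivatives of the metric coefficients.

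The construction splits into two regimes. Near $D_2$ (that is, $\alpha_2\rightarrow\infty$ with $\alpha_1$ bounded), formula \eqref{eq:27} exhibits $g_0$ as asymptotically conical with radial parameter $r=2e^{\beta/2}$, so standard dilations by $r^{-1}$ in the transverse variables give the required charts; agreement of $g_0$ with $g_2$ up to the order controlled by Proposition~\ref{prop:est-Ric} transfers the bounds. Near $D_1$ (that is, $\alpha_1\rightarrow\infty$), we read off from the model \eqref{eq:29} that at a point $p$ with $\alpha_1(p)=x_1\gg 1$ the tangent space splits into base-like directions (namely $l_1$ and the $\mu_{\ha}$ with $\alpha_2\nmid\ha_r$) of squared metric scale $\sim e^{a_0 x_1}$, and fiber-like directions (namely $l_2$ and the $\mu_{\ha}$ with $\alpha_2\mid\ha_r$) of squared scale $\sim e^{(a_0-a_1)x_1}$. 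Using the parametrization \eqref{eq:24}, we choose anisotropic coordinates near $p$ that rescale each of these two blocks by the inverse of its scale. Under the hypothesis $a_1\leq a_0$, the fiber-scale factor $e^{(a_0-a_1)x_1/2}$ is bounded from below, so the rescaled charts cover balls of definite $g_0$-radius. The Stenzel potential $w$ and all its derivatives are uniformly bounded on any bounded interval in $\alpha_2$, so in these coordinates $g_{\mo}$ is uniformly equivalent to a fixed reference metric with $C^{\ell_0,\alpha}$-bounded coefficients. The bounds on $g_0-g_{\mo}$ from the previous subsection, combined with the choice of $k$ in Proposition~\ref{prop:est-Ric}, upgrade this uniformity to $g_0$ itself. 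Overlaps of neighboring charts are composites of translations in $\fa$, anisotropic dilations and left $K$-translations, which are all $C^{\ell_0,\alpha}$-bounded; a finite cover of the intermediate region $\alpha_1\sim \alpha_2$ by such charts, possibly with a continuously interpolated bi-scaling, finishes the construction.

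The main obstacle is verifying the uniform bounds on $g_{\mo}$ in the anisotropic charts near $D_1$, and in particular that the off-diagonal base/fiber contribution $(a_0-a_1)e^{-a_1\ta_1}w'(\alpha_2)(\ta_1\alpha_2+\alpha_2\ta_1)$ appearing in the expansion of $d^2\rhoun_1$ just before \eqref{eq:13} remains bounded after the rescaling. This is precisely where $a_1\leq a_0$ is used: the mixed block is a geometric mean of the two diagonal scales and the condition prevents it from dominating, while at the same time ensuring the fiber block is not crushed below unit size. If instead $a_1>a_0$, the fiber directions contract strictly faster than the cone radial direction, no bi-scaling produces a chart of unit size, and the injectivity radius genuinely collapses at $D_1$; this is the pathological behavior alluded to in the introduction when the ansatz collapses too quickly towards the singular points.
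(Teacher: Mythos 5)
Your proposal reverses the paper's logical order: you build the atlas first and read off the injectivity-radius and curvature bounds as corollaries, whereas the paper first extracts the injectivity-radius lower bound directly from the geometric picture given by \eqref{eq:29} (the fiber directions scale like $e^{\frac{a_0-a_1}{2}\ta_1}$, which is bounded below exactly when $a_1\leq a_0$), then feeds that together with the Ricci bounds from Proposition~\ref{prop:est-Ric} into standard harmonic-coordinate theory to obtain the $C^{\ell_0-1,\alpha}$ harmonic radius, hence the atlas and the $C^{\ell_0-3,\alpha}$ curvature bound. Your approach is more constructive but carries a heavier verification burden; the paper's is shorter precisely because it leverages a theorem (harmonic radius lower bounds under injectivity radius and Ricci control) that bypasses the need to exhibit explicit charts in the transition region.

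There are two substantive issues with your argument as written. First, the claim that $a_1\leq a_0$ is what keeps the off-diagonal base/fiber term under control is not correct. After the bi-scaling the $\ta_1$--$\alpha_2$ cross term divided by the geometric mean of the two diagonal blocks behaves like $e^{-a_1\alpha_1/2}$, which tends to zero for any $a_1>0$, independently of whether $a_1\leq a_0$. The only place the hypothesis enters is to prevent the fiber block $\rhoun_1\,e^{-a_1\ta_1}w''$ from being crushed below unit size, i.e.\ precisely the collapse obstruction. Second, your treatment of the transition region $\alpha_1\sim\alpha_2$, where both variables are large, is the genuine core of a direct atlas construction and is only gestured at by ``a continuously interpolated bi-scaling''; in that region $\alpha_2$ is unbounded, so the asserted uniform boundedness of $w$ and its derivatives on ``bounded intervals'' does not apply, and one must instead use the asymptotically conical structure of the Stenzel metric and match it against the conical scaling near $D_2$. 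Moreover the paper only provides a $C^0$ estimate $|g_0-g_{\mo}|_{g_{\mo}}=O(e^{-a_1\alpha_1})$, not the $C^{\ell_0-1,\alpha}$ estimate your argument needs to ``upgrade'' uniformity from $g_{\mo}$ to $g_0$; obtaining those higher-order estimates directly is what the harmonic-coordinate route avoids having to do, since it only needs the Ricci bounds that are actually proved in Proposition~\ref{prop:est-Ric}.
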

By a $C^{k,\alpha}$ atlas we mean the local existence of holomorphic diffeomorphisms with a ball $B\subset \Bbb{C}^n$ such that $C^{-1}g_{\Bbb{C}^n} \leq g_0 \leq C g_{\Bbb{C}^n}$ and $\|g_0\|_{C^{k,\alpha}(B)} \leq C$. The notion of quasi-atlas is similar but the diffeomorphisms can be only local diffeomorphisms: this is used in Tian-Yau \cite{TiaYau90} but here we need only the notion of atlas.
\begin{proof}
  This follows immediately from the model \eqref{eq:29} for the metric at infinity : if $a_1>a_0$ then there is a collapsing in the directions of the fibers $X_1$ when $\ta_1\rightarrow\infty$, and it follows that the injectivity radius goes to zero since it behaves like that $e^{\frac{a_0-a_1}2\ta_1}\inj_{X_1}$. But if $a_1\leq a_0$ all directions blow up or at least remain bounded below when one goes to infinity, so the injectivity radius stays bounded below.

  A lower bound on the injectivity radius and the bound on $\ell_0-2$ derivatives of Ricci (Proposition~\ref{prop:est-Ric}) gives a lower bound on the $C^{\ell_0-1,\alpha}$ harmonic radius of $g_0$, which gives a $C^{\ell_0-3,\alpha}$ bound on the curvature of $g_0$. From this it is easy to pass to a $C^{\ell_0-3,\alpha}$ atlas, see for example \cite{TiaYau90}.
\end{proof}

We produce the Kähler Ricci flat metric by using the Tian-Yau theorem \cite{TiaYau91} in the version written in the PhD Thesis of Hein \cite[Proposition 4.1]{Hei10}. The hypothesis on the initial metric $g_0$ are:
\begin{enumerate}
\item the existence of a $C^{3,\alpha}$ quasi-atlas, which follows from Lemma~\ref{lem:atlas} with $\ell_0\geq4$;
\item an initial Ricci potential $f\in C^{2,\alpha}$ decaying as $O(r^{-2-\varepsilon})$: this follows from Proposition~\ref{prop:est-Ric} with $\ell_0\geq 3$;
\item the condition SOB($n$): there exists a point $x_0$ and $C\geq 1$ such that if we note $r(x)$ the distance to $x_0$, then the annuli $A(x_0,s,t)$ are connected for all $t>s\geq C$, $\Vol(B(x_0,s))\leq Cs^n$ for all $s\geq C$, and $\Vol(B(x,(1-C^{-1})r(x))) \geq C^{-1} r(x)^n$ and $\Ric(x)\geq -Cr(x)^{-2}$: all these conditions are clear given our explicit model.
\end{enumerate}
The theorem of Hein now produces a Kähler Ricci flat metric $\omega_0+dd^Cu$ with $u\in C^{4,\bar \alpha}$ for some $\bar \alpha\leq \alpha$. Therefore this metric has the same asymptotic cone than $\omega_0$, and the theorem is proved.

\begin{rem}
  The function $e^\beta\sim \frac{r^2}4$ gives the asymptotic potential at infinity, which implies that $\Delta(e^\beta) \sim n$ (including when one goes to $D_1$, that is in the directions where there is collapsing). The functions $e^{\delta \beta}$ are then well suited to barrier arguments, and one can then prove that, if we write the Ricci flat metric $\omega_0+dd^Cu$, then one has actually $u=O(e^{-\varepsilon\beta})$, see \cite[§ 4.5]{Hei10}.
\end{rem}

\section{Summary of constants}
\label{sec:summary-constants}
We gather in Table~\ref{tab:constants} the expression of notable constants in terms of the multiplicities 
in the restricted root system, as well as the indexing of simple restricted roots.
Recall that the dimension $n$ of $X$ is $n=2+\sum_{\alpha\in\rRoots^+} m_{\alpha}$, 
that the dimension of the fibers of the facets are 
$\dim(X_1)=1+m_{\alpha_2}+m_{2\alpha_2}$ and 
$\dim(X_2)=1+m_{\alpha_1}+m_{2\alpha_1}$. 
The coefficients of $\varpi=A_1\alpha_1+A_2\alpha_2$ were computed in 
Section~\ref{sec:checks} and are recalled in the table.
For the Tian-Yau ansatz, we introduced $b=2A_2/n$, then 
set $a_0=b |\ta_2|^2/\langle \ta_1,\ta_2 \rangle$ 
and $b_1=b \langle \ta_2,\alpha_2\rangle / \langle \alpha_1,\alpha_2\rangle$.
Finally, the constant $a_1$ appeared in the expansions, 
and is equal to 
$nb_1-m_{\alpha_1}-2m_{2\alpha_1}/(1+m_{\alpha_2}+m_{2\alpha_2})$.

We also include in the table when the condition $a_1\leq a_0$ is satisfied, 
and when the positive Kähler-Einstein metric needed exists on $\check{D}_2$. 
Note that we consider only the values of multiplicities that appear in symmetric spaces. 

\begin{table}
\begin{equation*}
\begin{array}{cccccc}
\toprule
 & A_2 & B(C)_2~(\alpha_1=\alpha) & B(C)_2~(\alpha_1=\beta) & G_2~(\alpha_1=\alpha) & G_2~(\alpha_1=\beta) \\
\midrule
\left<\alpha_1,\alpha_2\right> & -1/2 & -1 & -1 & -3/2 & -3/2\\
\left<\alpha_1,\alpha_1\right> & 1 & 2 &1  & 3 & 1\\
\left<\alpha_2,\alpha_2\right> & 1 & 1 &2 & 1 & 3\\
n & 2+3m & 2(1+m_1+m_2+m_3) & 2(1+m_1+m_2+m_3) & 2+6m & 2+6m \\
\dim(X_1) & 1+m & 1+m_2+m_3 & 1+m_1 & 1+m & 1+m \\
\dim(X_2) & 1+m & 1+m_1 & 1+m_2+m_3 & 1+m & 1+m \\
A_1 & m & m_1+m_2/2+m_3 & m_1+m_2+2m_3 & 3m & 5m \\
A_2 & m & m_1+m_2+2m_3 & m_1+\frac{m_2}{2}+m_3 & 5m & 3m \\
\midrule
b & \frac{2m}{2+3m} & \frac{m_1+m_2+2m_3}{1+m_1+m_2+m_3} & \frac{2m_1+m_2+2m_3}{2(1+m_1+m_2+m_3)} & \frac{5m}{1+3m} & \frac{3m}{1+3m} \\
b_1 & 3b/2 & b/2 & b & b/6 & b/2 \\
a_0 & 2b & b & 2b & 2b/3 & 2b \\
a_1 & \frac{2m}{1+m} & \frac{m_2+2m_3}{1+m_2+m_3} & \frac{2m_1}{1+m_1} & \frac{2m}{3(1+m)} & \frac{2m}{3(1+m)} \\
\midrule
a_1\leq a_0 & \mathrm{false} & m_3\leq 1 & m_2(m_1-1)\leq 2m_3 & \mathrm{true} & \mathrm{true} \\
\midrule
\check{D}_2~\mathrm{KE?} & \mathrm{true} & \mathrm{true} & \mathrm{true} & \mathrm{true} & \mathrm{false} \\
\bottomrule
\end{array}
\end{equation*}
\caption{Notable constants and conditions}
\label{tab:constants} 
\end{table}

\bibliographystyle{alpha}
\bibliography{KRFSS}

\newcommand{\etalchar}[1]{$^{#1}$}
\begin{thebibliography}{{Del}17b}

\bibitem[AL92]{AL92}
H.~Azad and J.-J. Loeb.
\newblock Plurisubharmonic functions and {K}\"ahlerian metrics on
  complexification of symmetric spaces.
\newblock {\em Indag. Math. (N.S.)}, 3(4):365--375, 1992.

\bibitem[BBE{\etalchar{+}}11]{BBEGZ}
R.~J. {Berman}, S.~{Boucksom}, P.~{Eyssidieux}, V.~{Guedj}, and A.~{Zeriahi}.
\newblock {K\"ahler-Einstein metrics and the K\"ahler-Ricci flow on log Fano
  varieties}.
\newblock {\em ArXiv e-prints}, November 2011.

\bibitem[BG96]{BG96}
Olivier Biquard and Paul Gauduchon.
\newblock La m\'etrique hyperk\"ahl\'erienne des orbites coadjointes de type
  sym\'etrique d'un groupe de {L}ie complexe semi-simple.
\newblock {\em C. R. Acad. Sci. Paris S\'er. I Math.}, 323(12):1259--1264,
  1996.

\bibitem[CDR16]{CDR16}
Ronan~J. {Conlon}, Anda {Degeratu}, and Fr\'ed\'eric {Rochon}.
\newblock {Quasi-asymptotically conical Calabi-Yau manifolds}.
\newblock \texttt{arXiv:1611.04410}, 2016.

\bibitem[CH13]{ConHei13}
Ronan~J. {Conlon} and Hans-Joachim {Hein}.
\newblock {Asymptotically conical Calabi-Yau manifolds. I.}
\newblock {\em {Duke Math. J.}}, 162(15):2855--2902, 2013.

\bibitem[CH15]{ConHei15}
Ronan~J. {Conlon} and Hans-Joachim {Hein}.
\newblock {Asymptotically conical Calabi-Yau metrics on quasi-projective
  varieties.}
\newblock {\em {Geom. Funct. Anal.}}, 25(2):517--552, 2015.

\bibitem[{Del}16]{DelKSSV}
Thibaut {Delcroix}.
\newblock {K-Stability of Fano spherical varieties}.
\newblock \texttt{arXiv:1608.01852}, to appear in Ann. Sci. \'Ec. Norm.
  Sup\'er., 2016.

\bibitem[Del17a]{DelKE}
Thibaut Delcroix.
\newblock K\"ahler-{E}instein metrics on group compactifications.
\newblock {\em Geom. Funct. Anal.}, 27(1):78--129, 2017.

\bibitem[{Del}17b]{DelHoro}
Thibaut {Delcroix}.
\newblock {K\"ahler geometry of horosymmetric varieties, and application to
  Mabuchi's K-energy functional}.
\newblock \texttt{arXiv:1712.00221}, 2017.

\bibitem[{Don}08]{Don08}
Simon~K. {Donaldson}.
\newblock K\"ahler geometry on toric manifolds, and some other manifolds with
  large symmetry.
\newblock In {\em Handbook of geometric analysis. {N}o. 1}, volume~7 of {\em
  Adv. Lect. Math. (ALM)}, pages 29--75. Int. Press, Somerville, MA, 2008.

\bibitem[DP83]{DCP83}
Corrado {De Concini} and Claudio {Procesi}.
\newblock Complete symmetric varieties.
\newblock In {\em Invariant theory ({M}ontecatini, 1982)}, volume 996 of {\em
  Lecture Notes in Math.}, pages 1--44. Springer, Berlin, 1983.

\bibitem[GH15]{GH15_fano}
Giuliano {Gagliardi} and Johannes {Hofscheier}.
\newblock {Gorenstein spherical Fano varieties.}
\newblock {\em {Geom. Dedicata}}, 178:111--133, 2015.

\bibitem[Gho02]{Gho02}
Mohammad Ghomi.
\newblock The problem of optimal smoothing for convex functions.
\newblock {\em Proc. Amer. Math. Soc.}, 130(8):2255--2259, 2002.

\bibitem[Hei10]{Hei10}
Hans-Joachim Hein.
\newblock {\em On gravitational instantons}.
\newblock PhD thesis, Princeton University, 2010.

\bibitem[Hel78]{Hel78}
Sigurdur Helgason.
\newblock {\em Differential geometry, {L}ie groups, and symmetric spaces},
  volume~80 of {\em Pure and Applied Mathematics}.
\newblock Academic Press, Inc. [Harcourt Brace Jovanovich, Publishers], New
  York-London, 1978.

\bibitem[Hum78]{Hum78}
James~E. Humphreys.
\newblock {\em Introduction to {L}ie algebras and representation theory},
  volume~9 of {\em Graduate Texts in Mathematics}.
\newblock Springer-Verlag, New York-Berlin, 1978.
\newblock Second printing, revised.

\bibitem[{Li}17]{Li17}
Yang {Li}.
\newblock {A new complete Calabi-Yau metric on $\Bbb{C}^n$}.
\newblock \texttt{arXiv:1705.07026}, 2017.

\bibitem[LM85]{LocMcO}
Robert~B. {Lockhart} and Robert~C. {McOwen}.
\newblock {Elliptic differential operators on noncompact manifolds.}
\newblock {\em {Ann. Sc. Norm. Super. Pisa, Cl. Sci., IV. Ser.}}, 12:409--447,
  1985.

\bibitem[Ruz12]{Ruz12}
Alessandro Ruzzi.
\newblock Fano symmetric varieties with low rank.
\newblock {\em Publ. Res. Inst. Math. Sci.}, 48(2):235--278, 2012.

\bibitem[Ste93]{Ste93}
Matthew~B. Stenzel.
\newblock Ricci-flat metrics on the complexification of a compact rank one
  symmetric space.
\newblock {\em Manuscripta Math.}, 80(2):151--163, 1993.

\bibitem[{Sze}17]{Sze17}
G\'abor {Szekelyhidi}.
\newblock {Degenerations of $\Bbb{C}^n$ and Calabi-Yau metrics}.
\newblock \texttt{arXiv:1706.00357}, 2017.

\bibitem[TY90]{TiaYau90}
Gang Tian and Shing-Tung Yau.
\newblock Complete {K}\"ahler manifolds with zero {R}icci curvature. {I}.
\newblock {\em J. Amer. Math. Soc.}, 3(3):579--609, 1990.

\bibitem[TY91]{TiaYau91}
Gang Tian and Shing-Tung Yau.
\newblock Complete {K}\"ahler manifolds with zero {R}icci curvature. {II}.
\newblock {\em Invent. Math.}, 106(1):27--60, 1991.

\end{thebibliography}

\end{document}